\numberwithin{equation}{section}
\numberwithin{subsection}{section}
\newtheorem*{namedtheorem}{\theoremname}
\newcommand{\theoremname}{testing}
\newtheorem{Theorem}{Theorem}
\newtheorem{theorem}[subsubsection]{Theorem}
\newtheorem{proposition}[subsubsection]{Proposition}
\newtheorem{conjecture}{Conjecture}
\newtheorem{corollary}[subsubsection]{Corollary}
\newtheorem{lemma}[subsubsection]{Lemma}
\theoremstyle{definition}
\newtheorem{definition}[subsubsection]{Definition}
\newtheorem{question}[subsection]{Question}
\newtheorem{example}[subsubsection]{Example}
\newtheorem{remark}[subsubsection]{Remark}
\theoremstyle{remark}
\newcommand\cA{\mathcal{A}}
\newcommand\cB{\mathcal{B}}
\newcommand\cF{\mathcal{F}}
\newcommand\cK{\mathcal{K}}
\newcommand\cL{\mathcal{L}}
\newcommand\cM{\mathcal{M}}
\newcommand\cN{\mathcal{N}}
\newcommand\cO{\mathcal{O}}
\newcommand\cP{\mathcal{P}}
\newcommand\cS{\mathcal{S}}
\newcommand\cT{\mathcal{T}}
\newcommand\cU{\mathcal{U}}
\renewcommand\AA{\mathbb{A}}
\newcommand\CC{\mathbb{C}}
\newcommand\C{\mathbb{C}}
\newcommand\EE{\mathbb{E}}
\newcommand\HH{\mathbb{H}}
\newcommand\PP{\mathbb{P}}
\newcommand\QQ{\mathbb{Q}}
\newcommand\RR{\mathbb{R}}
\renewcommand\SS{\mathbb{S}}
\newcommand\ZZ{\mathbb{Z}}
\newcommand\bE{\mathbf{E}}
\newcommand\bH{\mathbf{H}}
\newcommand\bx{\mathbf{x}}
\newcommand\frg{\mathfrak{g}}
\newcommand\NL{{\rm NL}}
\newcommand\rank{{\rm rank}}
\newcommand\Pic{{\rm Pic}}
\newcommand{\SO}{{\rm SO}}
\newcommand{\Aut}{{\rm Aut}}
\newcommand{\bv}{{\bf v}}
\newcommand{\Mp}{{\rm Mp}}
\newcommand{\Sp}{{\rm Sp}}
\newcommand{\GL}{{\rm GL}}
\newcommand{\Gspin}{{\rm GSpin}}
\newcommand{\CH}{{\rm CH}}
\newcommand{\BV}{{\rm BV}}
\newcommand{\Mon}{{\rm Mon}}
\newcommand{\td}{{\rm td}}
\newcommand{\SC}{{\rm SC}}
\theoremstyle{plain}
\theoremstyle{definition}
\begin{document}
	
\title[Tautological classes on moduli of hyperk\"ahler manifolds]{Tautological classes on moduli space of hyperk\"ahler manifolds}
\author{Nicolas Bergeron} 

\address{Sorbonne Universit\'es, UPMC Univ Paris 06, Institut de Math\'ematiques de Jussieu--Paris Rive Gauche, UMR 7586, CNRS, Univ Paris Diderot, Sorbonne Paris Cit\'e, F-75005, Paris, France}

\email{nicolas.bergeron@imj-prg.fr}

\author{Zhiyuan Li}
\address{Shanghai Center for Mathematical Sciences \\ Fudan University\\
220 Handan Road, Shanghai, 200433 China\\
	}
\email{zhiyuan\_li@fudan.edu.cn}

\begin{abstract}In this paper, we discuss the cycle theory on moduli spaces $\cF_h$ of $h$-polarized hyperk\"ahler manifolds. Firstly, we construct the tautological ring on  $\cF_h$ following the work of Marian, Oprea and Pandharipande on the tautological conjecture on moduli spaces of K3 surfaces. We study the  tautological classes in cohomology groups and prove that most of them are linear combinations of Noether-Lefschetz cycle classes. In particular, we prove the cohomological version of the tautological conjecture on moduli space of K3$^{[n]}$-type hyperk\"ahler manifolds with $n\leq 2$. Secondly, we prove the cohomological generalized Franchetta conjecture on universal family of these hyperk\"ahler manifolds. 
\end{abstract}
\maketitle

\section{Introduction}

The tautological ring of the moduli space $\cM_g$  of genus $g\geq 2$ curves, originally studied by Mumford,  is  the subring of the Chow ring generated by $\kappa$-classes which appear most naturally in geometry. There have been substantial progress in understanding tautological ring of  moduli space of curves  in the past decades. In higher dimensional moduli theory, little is known even regarding definitions. Recently, there have been some developments towards the cycle theory on moduli spaces of K3 surfaces.  In this paper, we would like to investigate the tautological class problem on moduli spaces of K3 type varieties.

\subsection{MOP conjecture and its generalization}Let $\cK_g$ be the moduli space of primitively polarized K3 surfaces of genus $g$.   The cycle theory on $\cK_g$  appears to be much more complicated than on $\cM_g$ because there are many more classes in the Chow groups. 
First of all, there are so called {\it NL-cycles} on $\cK_g$ arising from the  Noether-Lefschetz theory.   For  $1\leq r\leq19$,  the {\it $r$-th higher  Noether-Lefschetz  locus} $$\cN^r(\cK_g) \subseteq \cK_g$$ parametrizing  the  K3 surfaces in  $\cK_g$ with Picard number greater than $r$, is a countable union of subvarieties of codimension $r$.  Each irreducible component of $\cN^r(\cK_g)$ occurs as the moduli space of certain lattice-polarized K3 surfaces in $\cK_g$  and  a NL-cycle of codimension $r$ is defined as some  linear combinations of such irreducible subvarieties.  They  play a very important role in the cycle theory of $\cK_g$.   It has been shown that the Picard group $\Pic_\QQ(\cK_g)$ with rational coefficients is generated by NL-divisors, see \cite{BLMM16}.  This motivates us to study the subring  generated by  all NL-cycles, denoted by  $\NL^\ast(\cK_g)$.

Secondly,  similar as the case in $\cM_g$, one can obtain natural cycle classes from the tautological bundles on the universal family. Let $\cK_\Sigma \subset \cK_g$ be the $r$-th higher Noether-Lefschetz locus corresponding to some Picard lattice $\Sigma$ of rank $r+1$.  In \cite{MOP15}, Marian, Oprea and Pandharipande (MOP) have introduced $\kappa$-classes $\kappa^\cB_{a_0,\ldots,a_r;b}$ on $\cK_\Sigma$, see Definition \ref{D321}. The {\it tautological ring}  $\mathrm{R}^\ast(\cK_g)\subseteq \CH_\QQ^\ast(\cK_g)$ is  the $\QQ$-subalgebra generated by the images of the $\kappa$-classes on $\cK_\Sigma$ via pushforward maps. The tautological conjecture on $\cK_g$ states:
 \begin{conjecture}[MOP] \label{ConjMOP}
 Let $\mathrm{R}^\ast(\cK_g)$ be the subring of $\CH^\ast_\QQ(\cK_g)$ generated by all the images of the $\kappa$-classes on $\cK_\Lambda$. 	Then $\NL^\ast(\cK_g)=  \mathrm{R}^\ast(\cK_g)$. 
 \end{conjecture}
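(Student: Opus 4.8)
The plan is to prove the two inclusions $\NL^\ast(\cK_g)\subseteq\mathrm{R}^\ast(\cK_g)$ and $\mathrm{R}^\ast(\cK_g)\subseteq\NL^\ast(\cK_g)$ in $\CH^\ast_\QQ(\cK_g)$ separately, writing $\iota_\Sigma\colon\cK_\Sigma\hookrightarrow\cK_g$ for the inclusion of the Noether--Lefschetz locus attached to a lattice $\Sigma$ of rank $r+1$ and $\pi\colon\cX_\Sigma\to\cK_\Sigma$ for its universal family. I expect the first inclusion to be the soft one. The key observation is that the fundamental class of each Noether--Lefschetz component is already tautological: unwinding Definition \ref{D321} with $\cB$ the relative second Chern class $c_2(T_\pi)$, fibre integration gives $\pi_\ast\cB=\chi_{\mathrm{top}}\cdot 1_{\cK_\Sigma}=24\cdot 1_{\cK_\Sigma}$, so that $\iota_{\Sigma\ast}\kappa^\cB_{0,\dots,0;1}=24\,[\cK_\Sigma]$ and hence $[\cK_\Sigma]\in\mathrm{R}^\ast(\cK_g)$ for every $\Sigma$. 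Since an NL-cycle is by definition a $\QQ$-linear combination of such fundamental classes and $\mathrm{R}^\ast(\cK_g)$ is a $\QQ$-subalgebra, the entire ring $\NL^\ast(\cK_g)$ generated by NL-cycles is then contained in $\mathrm{R}^\ast(\cK_g)$.

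For the reverse inclusion I would analyse the generators $\iota_{\Sigma\ast}\kappa^\cB_{a_0,\dots,a_r;b}$ one at a time. Each $\kappa^\cB_{a_0,\dots,a_r;b}$ is the $\pi$-pushforward of a monomial in the tautological line-bundle classes $\beta_0,\dots,\beta_r$ and in $\cB$; by Grothendieck--Riemann--Roch, together with the Noether--Lefschetz relations expressing the resulting divisorial corrections, this pushforward is a $\QQ$-polynomial in the Hodge class $\lambda|_{\cK_\Sigma}$ and the special (NL) divisor classes of $\cK_\Sigma$. Both sit in $\NL^1(\cK_\Sigma)$: for $\lambda$ this is the theorem of \cite{BLMM16} that $\Pic_\QQ$ of an orthogonal modular variety is spanned by special divisors, and the NL-divisors of $\cK_\Sigma$ are special divisors by construction. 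Applying the projection formula and the identity $\lambda|_{\cK_\Sigma}=\iota_\Sigma^\ast\lambda$, placing $\iota_{\Sigma\ast}\kappa^\cB_{a_0,\dots,a_r;b}$ in $\NL^\ast(\cK_g)$ reduces to showing that $\iota_{\Sigma\ast}$ of an intersection product of special divisors on $\cK_\Sigma$ lies in $\NL^\ast(\cK_g)$. As the components of the Noether--Lefschetz loci of $\cK_\Sigma$ are themselves Noether--Lefschetz loci of $\cK_g$, this reduces in turn to a single assertion: \emph{every intersection product of special divisors on an orthogonal modular variety lies in the $\QQ$-span of special cycles in the Chow group}.

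This last assertion is the main obstacle, and it is precisely the Chow-group form of Kudla's modularity conjecture for special cycles. In cohomology it is available through Kudla--Millson theory: the generating series of special cycles of fixed codimension is a Siegel modular form with special-cycle Fourier coefficients, and products of special divisors are recovered as its coefficients; this is the mechanism underlying the \emph{cohomological} results of the present paper. To upgrade it to $\CH^\ast_\QQ(\cK_g)$ I would first invoke the codimension-one case, namely the modularity of the special-divisor generating series in $\Pic_\QQ=\CH^1_\QQ$ proved in \cite{BLMM16}, and then lift the Kudla--Millson intersection formulae to the Chow group, expressing products $Z_{m_1}\cdot Z_{m_2}\cdots$ as the Fourier coefficients of a $\CH^\ast_\QQ$-valued Siegel modular form. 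The second step is the genuinely open input: it is Kudla's conjecture that the higher special-cycle generating series is modular already in $\CH^\ast_\QQ$, known at present only in restricted ranges of codimension and genus via Borcherds products and the circle of ideas around Bruinier--Howard--Yang and Howard--Madapusi~Pera. Conditional on this Chow-group modularity the two inclusions close up and yield Conjecture \ref{ConjMOP} in full; unconditionally, the same argument proves it exactly in the ranges where Chow modularity is known, which is what forces the restriction to K3$^{[n]}$-type with small $n$ in the theorems established here.
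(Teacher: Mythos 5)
Your first inclusion $\NL^\ast(\cK_g)\subseteq \mathrm{R}^\ast(\cK_g)$ is fine, and your computation $\pi_\ast c_2(\cT_\pi)=24\,[\cK_\Sigma]$ is a correct way to see it (the paper treats this inclusion as immediate from the definitions). But be aware that the statement you are attacking is stated in the paper as Conjecture \ref{ConjMOP} and is \emph{not} proved there: the paper establishes only its cohomological shadow (Theorem \ref{mainthm1}), by a completely different mechanism --- lifting cohomology classes to theta classes via the surjectivity of the theta correspondence and the Funke--Kudla--Millson ring, together with the vanishing $\mathrm{R}^{i}_{\rm hom}(\cK_g)=0$ for $i>17$ --- and it credits \cite{PY16} with an essentially complete proof of the Chow-level conjecture by Gromov--Witten methods. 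Your argument for the reverse inclusion likewise does not close: you end by conditioning on the Chow-group span/modularity statement for special cycles, which, as you yourself note, is open. A reduction of one open conjecture to another is not a proof, so as it stands the proposal establishes nothing unconditional beyond the easy inclusion.

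Moreover, the reduction itself has a genuine gap \emph{before} you reach Chow modularity. The pivotal sentence --- that by Grothendieck--Riemann--Roch the pushforward $\pi_\ast$ of a monomial in the $c_1(\cL_i)$ and $c_j(\cT_\pi)$ ``is a $\QQ$-polynomial in the Hodge class $\lambda|_{\cK_\Sigma}$ and the special divisor classes'' --- is unjustified. GRR only rewrites such pushforwards in terms of Chern characters of the derived pushforward sheaves $R^q\pi_\ast$, and nothing known places the higher Chern classes of these sheaves in $\mathrm{DCH}^\ast(\cK_\Sigma)$, let alone in the span of special divisors; the paper's own \S 4.4 carries out exactly these GRR manipulations and explicitly leaves open, already for $n=2$, whether the resulting classes lie in $\QQ\left<\lambda\right>$ (for K3 fibers the pure tangent-Chern case is \cite{GT05}, but the mixed monomials are where Franchetta-type input, not GRR, is required). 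Separately, your appeal to \cite{BLMM16} for $\Pic_\QQ(\cK_\Sigma)=\NL^1(\cK_\Sigma)$ needs $\dim\cK_\Sigma\geq 3$ (Proposition \ref{NLconj}); for $\rank\Sigma$ large this fails --- Remark \ref{fail} records divisors on Hilbert modular surfaces not in the span of modular curves --- and the paper's cohomological patch for this regime (the vanishing of $\mathrm{R}^{i}_{\rm hom}(\cK_g)$ for $i>17$, via Corollary \ref{Cpet}) has no Chow-theoretic analogue, so your generator-by-generator analysis on each $\cK_\Sigma$ would break down there even if Chow modularity were granted.
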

 
More generally, as  polarized hyperk\"ahler manifolds are higher-dimensional generalizations of algebraic K3 surfaces and behave like K3 surfaces in many ways (cf.~\S 2), it is natural to study tautological rings on moduli spaces of all polarized hyperk\"ahler manifolds. Guided by the K3 case, we define kappa classes on smooth families of hyperk\"ahler manifolds as pushforwards of ``Beauville-Voisin classes'' on the generic fiber. These classes are named in reference to the work of Beauville and Voisin \cite{BV04} on weak splitting properties of algebraic hyperk\"ahler manifolds. In this way, we define tautological rings on moduli spaces of polarized hyperk\"ahler manifolds and formulate a generalization of Conjecture \ref{ConjMOP}   for  polarized hyperk\"ahler manifolds. We will refer to it as the {\it generalized  tautological conjecture}, see Conjecture \ref{GTC} for the details.

In this paper, we mainly consider the cohomological version of the generalized tautological conjecture and prove that  most  tautological classes in cohomology groups are in the span of Noether-Lefschetz classes. Our main results are Theorems \ref{main2} and \ref{tauthm}.  As  a consequence, we obtain

\begin{Theorem}\label{mainthm1}
	%When $r\leq 17$, 	every $\kappa$-class $[\kappa^\cB_{a_0,\ldots,a_r;b} ]$ in $H^{2m}(\cK_g ,\QQ)$ is in the span of special cycles of $\cK_g$, where $m=\sum_{i=0}^r a_i+2b-2$. In particular, 
	Let  $\mathrm{R}^\ast_{\rm hom}(\cK_g)\subseteq H^\ast(\cK_g,\QQ)$ be 
	the image of  $\mathrm{R}^\ast(\cK_g)$ in the cohomology ring, then $
	\NL^{\ast}_{\rm hom}(\cK_g)=\mathrm{R}^\ast_{\rm hom}(\cK_g ).
$ 
\end{Theorem}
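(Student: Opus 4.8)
The plan is to prove the asserted equality by establishing the two inclusions $\NL^\ast_{\rm hom}(\cK_g)\subseteq \mathrm{R}^\ast_{\rm hom}(\cK_g)$ and $\mathrm{R}^\ast_{\rm hom}(\cK_g)\subseteq \NL^\ast_{\rm hom}(\cK_g)$ separately. Only the second is genuinely cohomological and draws on the main Theorems \ref{main2} and \ref{tauthm}; the first already holds at the level of Chow groups, and the whole statement then follows by applying the cycle class map.

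For the inclusion $\NL^\ast_{\rm hom}(\cK_g)\subseteq \mathrm{R}^\ast_{\rm hom}(\cK_g)$, I would argue that every generator of the left-hand ring is tautological. Each irreducible component of a higher Noether--Lefschetz locus is of the form $\cK_\Sigma$ for some Picard lattice $\Sigma$ of rank $r+1$, and the associated NL-cycle class is, up to a positive rational multiple, the pushforward of the fundamental class $[\cK_\Sigma]$ under the natural morphism $\cK_\Sigma\to\cK_g$. It therefore suffices to realize $1\in\CH^0_\QQ(\cK_\Sigma)$ as one of the $\kappa$-classes of Definition \ref{D321}: on the universal family $\pi\colon\cX\to\cK_\Sigma$ the fibrewise self-intersection of the distinguished polarization gives $\pi_\ast(H^2)=(2g-2)\,[\cK_\Sigma]$, so that $[\cK_\Sigma]$ is a nonzero rational multiple of a $\kappa^\cB$-class and hence lies in $\mathrm{R}^\ast(\cK_g)$. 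Since $\NL^\ast(\cK_g)$ is generated as a $\QQ$-algebra by these cycle classes and $\mathrm{R}^\ast(\cK_g)$ is a subring, the inclusion follows in the Chow ring, and a fortiori after passing to cohomology.

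The reverse inclusion $\mathrm{R}^\ast_{\rm hom}(\cK_g)\subseteq\NL^\ast_{\rm hom}(\cK_g)$ carries the real content, and I would deduce it from the main theorems. The ring $\mathrm{R}^\ast_{\rm hom}(\cK_g)$ is generated by the images in $H^\ast(\cK_g,\QQ)$ of the classes $\kappa^\cB_{a_0,\ldots,a_r;b}$ on the loci $\cK_\Sigma$, pushed forward to $\cK_g$. By Theorems \ref{main2} and \ref{tauthm}, specialized to the K3 (that is, $\mathrm{K3}^{[1]}$) case, where the exceptional classes hidden in the word ``most'' do not arise, each such generator already lies in the span of Noether--Lefschetz classes on $\cK_\Sigma$; since pushforward along $\cK_\Sigma\to\cK_g$ carries NL-classes to NL-classes, every generator lands in $\NL^\ast_{\rm hom}(\cK_g)$. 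As the latter is a subring, the inclusion follows, completing the proof of equality.

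The main obstacle is thus entirely packaged into Theorems \ref{main2} and \ref{tauthm}: the assertion that the fibre-integrated Beauville--Voisin monomials defining the $\kappa$-classes are, cohomologically, combinations of special cycles. I expect the proof of those theorems to proceed by realizing $\cK_g$ as an orthogonal locally symmetric space $\Gamma\backslash D$ for $\mathrm{O}(2,19)$ and analyzing $H^\ast(\cK_g,\QQ)$ through automorphic forms. The delicate points are (i) rewriting $\pi_\ast$ of powers of the polarization and of the Beauville--Voisin class as invariant differential forms on $D$, via the Hodge-theoretic identification of these classes with the $\lambda$-class and the tautological forms; and (ii) invoking the surjectivity of the Kudla--Millson lift in the relevant low cohomological degrees, together with the modularity of the generating series of special cycles, to conclude that such invariant classes are spanned by Noether--Lefschetz classes. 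It is precisely this low-degree surjectivity, unconditional for K3 surfaces (building on \cite{BLMM16}), that makes Theorem \ref{mainthm1} a full equality rather than the merely partial statement available for higher-dimensional families.
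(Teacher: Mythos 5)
Your first inclusion $\NL^\ast(\cK_g)\subseteq \mathrm{R}^\ast(\cK_g)$ is correct and is exactly what the paper treats as immediate from the definitions: realizing a positive multiple of $[\cK_\Sigma]$ as $\pi_\ast(c_1(\cL_0)^2)=(2g-2)[\cK_\Sigma]$ is precisely why $\NL^\ast(\cK_g)\subseteq \mathrm{DR}^\ast(\cK_g)\subseteq \mathrm{R}^\ast(\cK_g)$ holds already in the Chow ring.

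The genuine gap is in your reverse inclusion, specifically in the parenthetical claim that in the K3 case ``the exceptional classes hidden in the word `most' do not arise.'' They do arise, and the paper flags this explicitly in the introduction: its approach would be to show that all $\kappa$-classes on $\cK_\Sigma$ lie in the span of Noether--Lefschetz cycles on $\cK_\Sigma$, but ``this actually is not true when $r>17$.'' The point is that Theorem \ref{tauthm} requires the base of the family to have dimension at least $3$, and by Remark \ref{fail} it genuinely fails in dimension $\leq 2$ (Hilbert modular surfaces carry divisors that are not in the span of modular curves). Since $\mathrm{R}^\ast(\cK_g)$ is generated by pushforwards of $\kappa$-classes from \emph{all} loci $\cK_\Sigma$, including those of Picard rank $\geq 18$, i.e. $\dim \cK_\Sigma\leq 2$, your generator-by-generator argument (``each such generator already lies in the span of NL classes on $\cK_\Sigma$'') breaks down exactly there. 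What rescues the theorem, and what your proof omits entirely, is the vanishing $\mathrm{R}^{i}_{\rm hom}(\cK_g)=0$ for $i>17$: any $\kappa$-class of positive codimension on such a small $\cK_\Sigma$ pushes forward to a class of codimension $\geq 18$ on $\cK_g$, and the paper shows these die in cohomology via Corollary \ref{Cpet}, whose proof needs Zucker's conjecture, hard Lefschetz for intersection cohomology, and the vanishing $\lambda^{b-2}\cdot c(U,g,K)=0$ of \cite{GT05}. (Your observation about the Chern-class indices is correct --- the bound $b_j=0$ for $j\geq \frac14(m_\Lambda-3)$ in Theorem \ref{tauthm} is vacuous for K3 fibers since $j\leq 2<19/4$ --- but it is the dimension restriction on the base, not the degree restriction, that bites.) Note finally that since $n=1<\frac{m_\Lambda-3}{8}=\frac{19}{8}$, Theorem \ref{main2} applied to $\Lambda=L_{K3}$ \emph{is} Theorem \ref{mainthm1}, so the clean deduction is simply to quote it; but any argument that, like yours, re-derives the statement from Theorem \ref{tauthm} must supply the high-codimension cohomological vanishing as a separate step.
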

Our approach is to show that all $\kappa$-classes on $\cK_\Sigma$ are in the span of Noether-Lefschetz cycles on $\cK_\Sigma$. However, this actually is not true when $r>17$ (see Remark~\ref{fail}).  Fortunately, this does not cause contradict Theorem \ref{mainthm1} because we will prove that $\mathrm{R}^{i}_{\rm hom}(\cK_g)=0$ for $i>17$. 
 
While we were writing this paper, the article \cite{PY16} appeared on arXiv. There Conjecture~\ref{ConjMOP} is essentially proved. They use Gromov-Witten theory. Beware however that they work with a slightly different (more canonical) definition of kappa classes.

\subsection{Generalized Franchetta conjecture} Denote by $\cK_{g}^\circ$ be the subspace of $\cK_g$ that consists of K3 surfaces with only trivial automorphisms. It carries a universal family $\pi : \cU_g^\circ\rightarrow \cK^\circ_g$.  Recall that, O'Grady's generalized Franchetta conjecture states
\begin{conjecture}[O'Grady]	\label{ConjGFC}
	For any $\alpha\in \CH^2(\cU_g^\circ)$,  the restriction $\alpha|_S $ to a closed fiber $S$ is a multiple of the Beauville-Voisin class $c_S$. 
\end{conjecture}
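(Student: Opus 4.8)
The plan is to reduce the statement to the relative cycle theory of the universal family $\pi\colon\cU_g^\circ\to\cK_g^\circ$ and to the weak splitting property of Beauville--Voisin on a single K3 surface. I would introduce $\lambda=c_1(\cL)\in\CH^1(\cU_g^\circ)$, the first Chern class of the universal polarizing bundle, together with the relative Beauville--Voisin class $\frc=\tfrac{1}{24}c_2(T_\pi)\in\CH^2(\cU_g^\circ)_\QQ$, where $T_\pi$ is the relative tangent bundle; by construction $\frc|_S=c_S$ on every fibre. The backbone of the argument is a generation statement: $\CH^2(\cU_g^\circ)_\QQ$ is spanned by the ``relatively tautological'' classes $\lambda^2$ and $\frc$ together with the classes pulled back from the base, namely $\lambda\cdot\pi^\ast\CH^1(\cK_g^\circ)$ and $\pi^\ast\CH^2(\cK_g^\circ)$.

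Granting this spanning, the restriction to a closed fibre $S=\pi^{-1}(s)$ is immediate to evaluate term by term. Every pullback class $\pi^\ast\gamma$ with $\gamma$ of positive codimension restricts to zero, and so does $\lambda\cdot\pi^\ast\beta$, since its fibrewise restriction is $h\cdot(\pi^\ast\beta)|_S=0$ with $h=\cL|_S$. The class $\lambda^2$ restricts to $h^2\in\CH^2(S)$, and by the Beauville--Voisin theorem \cite{BV04} the product of two divisor classes on a K3 surface is a multiple of $c_S$, so $h^2=(2g-2)\,c_S$. Finally $\frc|_S=c_S$. Summing the contributions shows $\alpha|_S\in\QQ\cdot c_S$, as desired.

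The generation statement is therefore the heart of the matter, and it is naturally attacked in cohomology first. Running the Leray spectral sequence for $\pi$ and using $R^1\pi_\ast\QQ=R^3\pi_\ast\QQ=0$, $R^4\pi_\ast\QQ=\QQ$, together with the splitting of $R^2\pi_\ast\QQ$ into the rank-one piece $\QQ\lambda$ and the variable local system $\cV$, one sees that the only graded piece of $H^4(\cU_g^\circ,\QQ)$ not visibly generated by $\lambda^2$, by $\lambda\cdot\pi^\ast H^2(\cK_g^\circ)$ and by $\pi^\ast H^4(\cK_g^\circ)$ is the transcendental contribution $H^2(\cK_g^\circ,\cV)$. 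That this piece carries no exotic algebraic classes beyond those accounted for by Noether--Lefschetz theory is exactly the content supplied by Theorem~\ref{mainthm1}, which equates the tautological and Noether--Lefschetz subrings in cohomology. This yields the cohomological form of the conjecture.

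The main obstacle is the passage from the cohomological spanning to the Chow-theoretic statement. Cohomological control only shows that $\alpha|_S$ agrees with a multiple of $c_S$ modulo $\CH_0(S)_{{\rm hom}}$, and by Mumford's theorem this group is enormous, so the cohomological input cannot by itself produce a genuine $0$-cycle identity. Closing the gap requires upgrading the generation of $\CH^2(\cU_g^\circ)_\QQ$ to the level of Chow groups; I would attempt this by a spreading-out argument, exploiting that the very general member of $\cK_g$ has Picard rank one so that $\CH^2$ of the generic fibre is generated by $h^2$ and $c_S$, and then propagating the resulting relation over the family through a Bloch--Srinivas decomposition of the relative diagonal. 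This Chow-level spanning, rather than the cohomological bookkeeping, is where the genuine difficulty lies.
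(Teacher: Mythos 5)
The first thing to say is that the paper does not prove this statement at all: it is O'Grady's generalized Franchetta conjecture, which the paper explicitly records as open (confirmed only for $g\leq 12$, by the reference \cite{PSY}) and then deliberately replaces by a cohomological surrogate, Theorem \ref{main3}, proved in \S 8.1 via Deligne's decomposition theorem and the theta-lift surjectivity results. Your fibre-by-fibre computation (pullback classes restrict to zero, $\lambda^2|_S=(2g-2)c_S$, $\tfrac{1}{24}c_2(T_\pi)|_S=c_S$) is correct, and your Leray/Noether--Lefschetz treatment of $H^4(\cU_g^\circ,\QQ)$ is essentially the same bookkeeping the paper carries out --- except that the piece $H^2(\cK_g^\circ,\cV)$ with coefficients in the variable local system is handled in the paper by the theta-lift surjectivity with nontrivial coefficients (Theorem \ref{T:531}, Corollary \ref{surjmoduli}), not by Theorem \ref{mainthm1}, which concerns only the tautological ring of the base with trivial coefficients. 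The paper stops at the cohomological statement because that is all that can currently be proved.

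The genuine gap is exactly where you locate it, but your proposed repair cannot work. The ``generation statement'' that $\CH^2(\cU_g^\circ)_\QQ$ is spanned by $\lambda^2$, $\tfrac{1}{24}c_2(T_\pi)$, $\lambda\cdot\pi^*\CH^1(\cK_g^\circ)$ and $\pi^*\CH^2(\cK_g^\circ)$ is not a stepping stone towards the conjecture: it immediately implies it and is at least as hard, so assuming it begs the question. Your plan for establishing it fails twice over. First, Picard rank one of the very general fibre controls $\CH^1$ of that fibre, not $\CH^2=\CH_0$: by Mumford's theorem --- which you invoke yourself two sentences earlier --- $\CH_0$ of a K3 surface over $\CC$, or over $\overline{\QQ(\cK_g^\circ)}$, is infinite-dimensional and certainly not generated by $h^2$ and $c_S$; if instead you mean the image of $\CH^2(\cU_g^\circ)$ in $\CH^2$ of the geometric generic fibre, then the claim that this image is spanned by those classes \emph{is} the Franchetta conjecture, so this reading is circular. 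Second, the Bloch--Srinivas decomposition of the (relative) diagonal requires $\CH_0$ of the fibres to be supported on a curve, which is precisely what Mumford's theorem rules out for K3 surfaces; there is no decomposition of the diagonal available to propagate a generic relation over the family. So the passage from the cohomological statement to the Chow-theoretic one is not a technical step that standard machinery can close; it is the open problem itself, and the paper makes no claim to solve it.
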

Here $c_S$ is a canonical class in $\CH_0 (S)$ represented by a point on a rational curve of $S$, which satisfies the following properties: 
\begin{enumerate}
	\item The intersection of two divisors on $S$ lies in $\mathbb{Z} c_S \subset \CH_0 (S)$.
	\item The second Chern class $c_2 (T_S )$ equals $24 c_S \in \CH_0 (S)$.
\end{enumerate}
Conjecture \ref{ConjGFC} has been confirmed when $g\leq 12$ (see \cite{PSY}) but remains widely open for large $g$.   
Let $\cT_\pi$ be the relative tangent bundle of $\pi$. According to \cite[Theorem 10.19]{Voi07}, Conjecture \ref{ConjGFC} is equivalent to the following 
\begin{conjecture}
 For any $\alpha\in \CH^2(\cU_g^\circ)$, there exists $m\in \QQ$ such that $\alpha-mc_2(\cT_\pi)$ supports on a proper subvariety of $\cK_g^\circ$.	
\end{conjecture}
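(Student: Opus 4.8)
The plan is to reformulate the statement as a generic-fibre vanishing and then to control $\CH^2(\cU_g^\circ)$ through its natural generators, using the Beauville--Voisin relations recorded above together with the tautological structure theory. First I would record the elementary dictionary: a class $\beta\in\CH^2(\cU_g^\circ)$ \emph{supports on a proper subvariety of $\cK_g^\circ$} precisely when its restriction $\beta|_{S_\eta}$ to the fibre $S_\eta$ over the generic point $\eta$ of $\cK_g^\circ$ vanishes in $\CH^2(S_\eta)=\CH_0(S_\eta)$. Indeed, a cycle represented on $\pi^{-1}(Z)$ for proper closed $Z\subsetneq\cK_g^\circ$ restricts to zero over $\eta$; conversely, since $\CH_0(S_\eta)=\varinjlim_U \CH^2(\pi^{-1}(U))$ over opens $U\subseteq\cK_g^\circ$, a class trivial over $\eta$ is already trivial on some $\pi^{-1}(U)$, hence supported on $\pi^{-1}(\cK_g^\circ\setminus U)$. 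Thus the assertion is equivalent to the statement that the restriction map $\CH^2(\cU_g^\circ)\to\CH_0(S_\eta)$ has image contained in the line $\QQ\cdot c_{S_\eta}$.

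The key steps are then the following. (i) Produce generators of $\CH^2(\cU_g^\circ)$ modulo classes supported over proper subvarieties: by the tautological generation results established above, in particular the identification $\NL^\ast_{\mathrm{hom}}(\cK_g)=\mathrm{R}^\ast_{\mathrm{hom}}(\cK_g)$ of Theorem \ref{mainthm1} and the description of the $\kappa$-classes, every such class is, up to classes supported over proper subvarieties, a $\QQ$-combination of $c_2(\cT_\pi)$ and of products $D\cdot D'$ of tautological divisor classes on $\cU_g^\circ$. (ii) Restrict each generator to $S_\eta$: by property (2) of the Beauville--Voisin class one has $c_2(\cT_\pi)|_{S_\eta}=c_2(T_{S_\eta})=24\,c_{S_\eta}$, and by property (1) every intersection $D\cdot D'|_{S_\eta}$ of two divisors lies in $\ZZ\cdot c_{S_\eta}$. (iii) The classes supported over proper subvarieties restrict to zero over $\eta$ by the first paragraph. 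Combining (i)--(iii), the image of $\CH^2(\cU_g^\circ)\to\CH_0(S_\eta)$ lies in $\QQ\cdot c_{S_\eta}$; writing $\alpha|_{S_\eta}=\lambda\,c_{S_\eta}$ and taking $m=\lambda/24$ gives $(\alpha-m\,c_2(\cT_\pi))|_{S_\eta}=0$, which is the desired conclusion.

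The main obstacle is step (i): proving that $\CH^2(\cU_g^\circ)$ carries no classes beyond the tautological ones modulo proper support. At the level of cohomology this is within reach — passing to $H^4(\cU_g^\circ,\QQ)$ and applying Deligne's decomposition for the smooth family $\pi$ yields $H^4(\cU_g^\circ,\QQ)\cong H^4(\cK_g^\circ,\QQ)\oplus H^2(\cK_g^\circ,R^2\pi_\ast\QQ)\oplus H^0(\cK_g^\circ,R^4\pi_\ast\QQ)$, the last summand being the line $\QQ$ on which $c_2(\cT_\pi)$ projects nontrivially (its fibre restriction being $24$ times the point class), while the first two summands restrict to zero on $S_\eta$; after subtracting the appropriate multiple of $c_2(\cT_\pi)$ the residual class has vanishing generic-fibre restriction in $H^4(S_\eta,\QQ)=\QQ$, which gives the \emph{cohomological} form of the statement. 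The genuine difficulty is to upgrade this from cohomology to Chow: because $\CH_0(S_\eta)$ is not detected by cohomology (Mumford), vanishing of the cohomology class on the fibre does not force vanishing in $\CH_0(S_\eta)$, and closing this gap is exactly the content of O'Grady's Conjecture \ref{ConjGFC}, to which the present statement is equivalent by \cite[Theorem 10.19]{Voi07}. Accordingly the method above unconditionally establishes only the cohomological generalized Franchetta conjecture, the full Chow statement remaining the hard open point.
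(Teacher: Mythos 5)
The statement you were asked to prove is a conjecture in the paper, not a theorem: the paper records it only as an equivalent reformulation (via \cite[Theorem 10.19]{Voi07}) of O'Grady's generalized Franchetta conjecture (Conjecture \ref{ConjGFC}), and what it proves unconditionally is only the cohomological version, Theorem \ref{main3}. Your proposal is honest about this --- your closing paragraph concedes that step (i) is out of reach and that the Chow-level statement is exactly the open problem --- so you have correctly diagnosed the status of the question rather than produced a proof, and the gap is precisely where you locate it. One correction, though: your appeal to Theorem \ref{mainthm1} in step (i) does not work even as a heuristic. That theorem identifies $\NL^\ast_{\rm hom}(\cK_g)$ with $\mathrm{R}^\ast_{\rm hom}(\cK_g)$ inside $H^\ast(\cK_g,\QQ)$ --- it concerns cycle classes on the \emph{base} moduli space --- and gives no generation statement for $\CH^2(\cU_g^\circ)$ (nor even for $H^4(\cU_g^\circ,\QQ)$) modulo classes with proper support; a generation result of that kind for the universal family would essentially \emph{be} the conjecture, so invoking it there is circular.

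Your claimed unconditional byproduct is also weaker than the paper's actual cohomological theorem. From Deligne's decomposition alone, vanishing of the corrected class on the generic fibre in $H^4(S_\eta,\QQ)$ is immediate, since the fibre restriction factors through the summand $H^0(\cK_g^\circ,R^4\pi_\ast\QQ)$; but Theorem \ref{main3} asserts more, namely that $\alpha-mc_2(\cT_\pi)$ is cohomologically zero on $\pi^{-1}(W)$ for some Zariski-open $W\subseteq\cK_g^\circ$, i.e.\ is supported on Noether--Lefschetz divisors. The components in $H^4(\cK_g^\circ,\QQ)$ and $H^2(\cK_g^\circ,R^2\pi_\ast\QQ)$ restrict to zero on every fibre yet need not die on any $\pi^{-1}(W)$. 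To kill them over an open set, the paper's proof of Theorem \ref{mainthm2} uses that $\alpha$ is algebraic, hence its Deligne components have lowest weight; it then descends the local systems (Lemma \ref{Descend}), lifts the components through $L^2$/intersection cohomology using the Harris--Zucker mixed Hodge compatibility, invokes the surjectivity of the theta lift to identify them as theta classes, and concludes via Theorem \ref{T:531} that they lie in $\mathrm{SC}^i_{\rm hom}$, hence are supported on special cycles. None of this machinery appears in your sketch, so even the cohomological statement your argument yields is strictly weaker than Theorem \ref{main3}; the Deligne decomposition by itself does not suffice.
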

 Our second result is a cohomological version of the generalized Franchetta conjecture (cf.~\cite{Vo12}).

\begin{theorem}\label{main3}
For any $\alpha\in \CH^2(\cU_g^\circ)$, there exists a rational number $m$ such that the class $[\alpha-mc_2(\cT_\pi)]\in H^4(\cU_g^\circ,\QQ)$ supports on Noether-Lefschetz divisors. In other words, $\alpha-mc_2(\cT_\pi)$ is cohomologically equivalent to zero on $\pi^{-1}(W)$ for some open subset $W\subseteq \cK_g^\circ$.
\end{theorem}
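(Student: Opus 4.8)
The plan is to analyze $[\alpha]\in H^4(\cU_g^\circ,\QQ)$ through the Leray spectral sequence of the smooth projective morphism $\pi:\cU_g^\circ\to\cK_g^\circ$. Since the fibres are K3 surfaces we have $R^1\pi_*\QQ=R^3\pi_*\QQ=0$, so the only pieces contributing to $H^4$ are $E_2^{0,4}=H^0(\cK_g^\circ,R^4\pi_*\QQ)$, $E_2^{2,2}=H^2(\cK_g^\circ,R^2\pi_*\QQ)$ and $E_2^{4,0}=H^4(\cK_g^\circ,\QQ)$, and the induced Leray filtration on $H^4$ has only the graded pieces $E_\infty^{0,4}$, $E_\infty^{2,2}$, $E_\infty^{4,0}$. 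First I would fix $m$. The edge map $H^4(\cU_g^\circ)\to E_\infty^{0,4}\subseteq H^0(\cK_g^\circ,R^4\pi_*\QQ)$ is restriction to a fibre $S$, and $H^4(S,\QQ)=\QQ\,c_S$. Because $\alpha$ is algebraic, $\alpha|_S$ is a $0$-cycle of degree independent of $S$ (the base is irreducible), say $\deg(\alpha|_S)=d$; using $c_2(T_S)=24c_S$ I set $m=d/24$, so that $\beta:=\alpha-mc_2(\cT_\pi)$ restricts to $0$ in $H^4(S,\QQ)$ for every fibre. Hence $[\beta]$ lies in $F^2H^4$ and is detected only by its images in $E_\infty^{2,2}$ and $E_\infty^{4,0}$.

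Next I would decompose $R^2\pi_*\QQ=\QQ\lambda\oplus\VV$, where $\lambda=c_1(\cL)$ is the polarization (a constant rank-one summand) and $\VV=\lambda^\perp$ is the primitive, rank-$21$ local system of weight two. This splits $E_2^{2,2}=\big(H^2(\cK_g^\circ,\QQ)\otimes\lambda\big)\oplus H^2(\cK_g^\circ,\VV)$. The trivial summand, together with $E_\infty^{4,0}\subseteq H^4(\cK_g^\circ,\QQ)$, consists of classes pulled back from the base (up to multiplication by $\lambda$): for these I would invoke that $\Pic_\QQ(\cK_g)$ is generated by NL-divisors and that $H^4(\cK_g)$ is spanned by NL-classes (the cohomological spanning results of this paper and \cite{BLMM16}). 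A class supported on a union of NL-divisors $Z$ restricts to $0$ on $\cK_g^\circ\setminus Z$, and a product of NL-divisors is supported on each factor. Thus after discarding a finite collection of NL-divisors these two parts of $[\beta]$ restrict to zero on $\pi^{-1}(W)$.

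The heart of the argument is the variable piece in $H^2(\cK_g^\circ,\VV)$. The key input I would use is that $H^2(\cK_g,\VV)$ is spanned by the classes of $\VV$-valued Noether–Lefschetz cycles: to an NL-divisor $\cN_\beta$ carrying a tautological algebraic section $\beta$ of $\VV|_{\cN_\beta}$ one associates a relative codimension-two cycle on $\pi^{-1}(\cN_\beta)$ whose Leray symbol is precisely this section, and such cycles generate $H^2(\cK_g,\VV)$. This is where the automorphic/theta-correspondence input (Kudla–Millson theory and the cohomological results behind Theorems~\ref{main2} and \ref{tauthm}) enters, and it is, I expect, the main obstacle: one must know that the \emph{full} group $H^2(\cK_g,\VV)$—not merely a distinguished subspace cut out by a Hodge-theoretic constraint—is generated by special cycle classes, so that the a priori arbitrary variable component of the algebraic class $[\beta]$ is automatically a combination of cycles each supported on a single NL-divisor.

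Granting this, subtracting those finitely many $\VV$-valued special cycles (each of which vanishes on the complement of its supporting NL-divisor) reduces $[\beta]$ to a pullback class from $H^4(\cK_g^\circ)$, already handled above. Taking $W$ to be the complement of all the NL-divisors that appeared in the three steps then yields $[\beta]|_{\pi^{-1}(W)}=0$, which is exactly the assertion that $\alpha-mc_2(\cT_\pi)$ is supported on Noether–Lefschetz divisors.
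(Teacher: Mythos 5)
Your overall route coincides with the paper's: Theorem \ref{main3} is deduced there from Theorem \ref{mainthm2}, whose proof runs exactly through your first two steps --- Deligne's decomposition \eqref{Dec} (i.e.\ the degenerate Leray spectral sequence), the observation that $H^0(\cK_g^\circ,\HH^4_\pi)\cong\QQ$ fixes $m$, and then the spanning results for special cycles with coefficients (Theorem \ref{T:531}) for the remaining graded pieces. The gap is in the step you yourself flag as the main obstacle, and it is a genuine one: you assert that the proof requires the \emph{full} group $H^2(\cK_g,\VV)$ (in fact $H^2(\cK_g^\circ,\VV)$, which is where your component actually lives) to be spanned by $\VV$-valued Noether--Lefschetz cycle classes, and you explicitly reject working with ``a distinguished subspace cut out by a Hodge-theoretic constraint''. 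This is backwards. The automorphic input (Matsushima's formula, the Vogan--Zuckerman classification, surjectivity of the theta lift) controls only the $L^2$-/intersection cohomology of the full arithmetic quotient $\Gamma\backslash D$; by Harris--Zucker (Theorem \ref{TL2map}) its image in the cohomology of an open subvariety such as $\cK_g^\circ$ is precisely the lowest-weight part of the mixed Hodge structure. While the full-group statement does hold on $\Gamma\backslash D$ itself in this degree range, the group $H^2(\cK_g^\circ,\VV)$ of the open moduli space may contain higher-weight classes coming, via the Gysin/residue sequence, from the removed loci; those loci are \emph{open} pieces of sub-Shimura varieties, so the odd-degree vanishing (Proposition \ref{oddvanish}) that would kill such contributions is unavailable, and no spanning statement for the full group follows from Theorems \ref{main2}, \ref{tauthm} or \ref{T:531}. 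As written, your key step therefore cannot be completed.

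The missing idea --- and the way the paper closes the argument --- is that the variable component of $[\beta]$ is \emph{not} ``a priori arbitrary'': since $[\beta]$ is algebraic and Deligne's decomposition is compatible with mixed Hodge structures, each Leray component of $[\beta]$ lies in the lowest-weight subspace of $H^{2i}(\cK_g^\circ,\HH^{4-2i}_\pi)$. This is exactly what allows one to lift it through the period map to a class on $\Gamma\backslash D$ (the pullback is a morphism of mixed Hodge structures surjecting onto the lowest-weight part), where Theorem \ref{T:531} --- applicable since $r=1,2<\tfrac{b+1}{3}$ with $b=19$ --- identifies the lift as a class in $\mathrm{SC}^r_{\rm hom}$ with coefficients, hence supported on Noether--Lefschetz divisors; restricting back finishes the proof. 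The same weight argument is also what legitimizes your treatment of the two trivial-coefficient pieces (a class in $H^4(\cK_g^\circ,\QQ)$ is not an NL class per se; only its being lowest weight lets you lift it to the Shimura variety where the spanning results apply). With this constraint inserted where you dismissed it, your proposal becomes the paper's proof.
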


Here again, Theorem \ref{main3} follows from a more general theorem concerning the cohomological generalized Franchetta conjecture on hyperk\"ahler manifolds (see $\S$8.1).

\subsection{Organization of the paper}
In Sections 2 and 3, we review the  hyperk\"ahler geometry, especially the cycle theory and Torelli theorem.  The tautological ring on moduli spaces of polarized hyperk\"ahlers  is  defined in Section 4. In Section 5 and Section 6, we recap the work of \cite{BMM16} and \cite{BLMM16} about surjectivity results on special cycles of Shimura varieties of orthogonal type.  Following \cite{FM06} in Section 7 we construct a so-called Funke-Kudla-Millson ring in the space of differential forms with coefficients. Section 8 is devoted to the proof of the cohomological tautological conjecture and the generalized Franchetta conjecture. In the last section, we discuss the properties of the ring generated by special cycles on Shimura varieties. 

\subsection{Notation and conventions}
Throughout this paper, we write $\widehat{\ZZ}$ for the profinite completion of $\ZZ$. We denote by  $\AA$ the ad\`ele ring of  $\QQ$ and $\AA_f$ the ring of finite ad\`eles. If $G$ is a semisimple classical group over $\QQ$,  we let $L^2(G(\QQ)\backslash G(\AA))$ be the space of square integrable functions on  $G(\QQ)\backslash G(\AA)$ and   denote by $\mathcal{A} (G)$ (resp.~$\mathcal{A}_{\rm cusp} (G)$) the set of irreducible square integral (resp.~cuspidal) representations  occuring discretely in $L^2(G(\QQ)\backslash G(\AA))$.  For a complex variety $X$, from now on we shall use $\CH^\ast(X)$ to denote the Chow ring of $X$ with {\it rational} coefficients and denote by 
$\mathrm{DCH}^\ast(X)\subseteq \CH^\ast(X)$ the subring generated by divisor classes.

\subsection{Acknowledgements}We are very grateful to Brendan Hassett, Daniel Huybrechts and Eyal Markman for many helpful discussions and useful suggestions. The first named author would like to thank Claire Voisin. She first told him about O'Grady's generalized Franchetta conjecture and asked if the methods of \cite{BLMM16} could be used to prove the cohomological version of it. He would also like to thank her for many explanations related to these topics. The second named author would like to thank Panhdaripande  and Yin for a lot of useful conversations during his stay in ETH.

\section{Hyperk\"ahler manifolds and moduli}

\subsection{Basic theory of hyperk\"ahler manifolds} A smooth complex compact $2n$-dimensional manifold  $X$ is an {\it irreducible holomorphic symplectic} or {\it  hyperk\"ahler} manifold if it is simply connected and $H^0(X,\Omega^2_X)$ is spanned by an everywhere nondegenerate holomorphic
2-form $\omega_X$.  It carries an integral, primitive quadratic form $q_X$ on $H^2(X, \ZZ)$,  called the Beauville-Bogomolov (BB) form, which satisfies
\begin{enumerate}
	\item $q_X$ is non-degenerate and of signature $(3,b_2(X)-3)$
	\item  There exists a positive rational number $c$, {\it the  Fujiki invariant} such that $q_X^n(\alpha)=c\int_X \alpha^{2n}$ for all classes $\alpha\in H^2(X,\ZZ)$.
	\item The Hodge decomposition $H^2(X, \CC)=H^{2,0}(X)\oplus H^{1,1}(X)\oplus H^{0,2}(X)$ is orthogonal with respect to $q_X\otimes \CC$. 
\end{enumerate}
From $(2)$, the BB-form $q_X$ is deformation invariant and we will thus restrict to compact hyperk\"ahler manifolds of a fixed deformation class, {\it the isomorphism type}, say $\Lambda$, of the lattice realized by the BB-form  is unique.  We say an isometry $\varphi: H^2(X,\ZZ)\rightarrow \Lambda$ is a {\it marking} of $X$.  Moreover, if we consider the entire cohomology of $X$, we let $R$ denote an abstract ring such that the cohomology ring $H^\ast(X,\ZZ)$ is isomorphic to $R$. Then we say that an isomorphism $$\Phi:H^\ast(X,\ZZ)\xrightarrow{\sim}R,$$ is a {\it full marking} of $X$. If $X$ comes with an ample line bundle $H$,   we say that $(X,H)$  is a {\it polarized  hyperk\"ahler manifold}.

Such manifolds are quite rare to construct. The two  well-known  series of examples, found by Beauville \cite{Bea83}, are the Hilbert scheme of points on K3 surfaces and  generalized Kummer varieties (See Example 2.1.1). The only other known examples were constructed by O'Grady \cite{Grad99, Grad03}. See the details as below.

\begin{example}\label{example}
\begin{enumerate}[(i)]
	\item For $n>0$, the length $n$ Hilbert scheme $S^{[n]}$ of a K3 surface $S$ is a hyperk\"ahler manifold of dimension $2n$. The second cohomology $H^2(S^{[n]},\ZZ)$ under BB-form is an even lattice of signature $(3, 20)$ and it is isomorphic to
	\begin{equation}
	L_{n}=U^{\oplus 3}\oplus E_8(-1)^{\oplus 2}\oplus \left< -2(n-1)\right>
	\end{equation}
	where $U$ is the hyperbolic lattice of rank two and $E_8$ is the positive definite lattice associated to the Lie group of the same name. We say that a hyperk\"ahler manifold $X$ is of \emph{$\mathrm{K3}^{[n]}$-type} if it is deformation equivalent to $S^{[n]}$. In that case the Fujiki invariant is $c = (2n)!/(n!2n)$.
	
	\item 	If $A$ is an abelian surface and $s:A^{[n+1]}\rightarrow A$ is the morphism induced by the additive structure of $A$,  then the generalized Kummer variety $K_n(A):= s^{-1}(0)$ is a projective hyperk\"ahler manifold of dimension $2n$.   We say that  $X$ is of \emph{generalized Kummer type} if it is deformation equivalent to $K_{n}(A)$. In this case, $H^2(X,\ZZ)$ is isomorphic to 
		\begin{equation}
		L_{K,n}=U^{\oplus 3}\oplus \left< -2(n+1)\right>.
		\end{equation}
		while the Fujiki invariant is $c=(n + 1)(2n)!/(n!2n)$.
	
	\item {\bf (OG6)} Let  $S$ be a K3 surface, and $M$ the moduli
	space of stable rank 2 vector bundles on $S$, with Chern classes $c_1=0$, $c_2=4$.  It admits a natural
	compactification $\overline{M}$, obtained by adding classes of semi-stable torsion free sheaves.   There is a desingularization of $\overline{M}$ which is a hyperk\"ahler
	manifold of dimension 10. Its second cohomology is a lattice of signature $(3,21)$.
	
	\item {\bf (OG10)}  The similar construction can be done starting from
	rank 2 bundles with $c_1 = 0$, $c_2 = 2$ on an abelian surface, that gives a hyperk\"ahler
	manifold of dimension 6 as in (iii). Its  second cohomology is a lattice of signature $(3,5)$.
\end{enumerate}	
\end{example}

\subsection{Automorphisms and monodromy group}  
Let $X$ be a hyperk\"ahler manifold of type $\Lambda$ and real dimension $2n$. Let $\Aut(X)$ be the group of automorphisms of $X$. Its action on the cohomology gives a map
\begin{equation}\label{aut}
\mathrm{Aut}(X)\rightarrow \GL(H^\ast(X,\ZZ))
\end{equation}
which has finite kernel.  We say that $X$ is {\it  cohomologically rigidified} if $\Aut(X)$ acts faithfully on $H^\ast(X,\ZZ)$, i.e. \eqref{aut} is injective. So far, for all hyperk\"ahler manifolds in Example \ref{example},    they are known to be cohomologically rigidified except the case of  {\bf OG6} type (cf.~\cite{Bea83,Og12}).   In general, it remains open whether \eqref{aut} is  injective for all hyperk\"ahler manifolds. 

Moreover,  the automorphisms act on $H^2(X,\ZZ)$ preserving the BB-form. So projection to the second cohomology yields a map 
\begin{equation}\label{2aut}
\mathrm{Aut}(X)\rightarrow \mathrm{O}(H^2(X,\ZZ)).
\end{equation}
with  finite kernel.  According to \cite{HT13}, this  kernel is   deformation invariant  and  it is trivial if $\Lambda$ is of K3$^{[n]}$ or {\bf OG10} type and   \hbox{nontrivial} if $\Lambda$ is of generalized Kummer  or {\bf OG6} type.

 \subsection{~} The {\it  monodromy group} ${\rm Mon}(X)\subseteq \mathrm{GL}(H^\ast(X,\ZZ))$ is defined as the subgroup generated by monodromy representations of all connected families containing $X$. Let ${\rm Mon}^2(X)\subseteq \mathrm{Aut}(H^2(X,\ZZ)) $ be its image under the projection to the second cohomology group.  Verbitsky \cite{Ve13} has shown that the group ${\rm Mon}^2(X)$ is an arithmetic subgroup of $\mathrm{O}(H^2(X,\ZZ))$ and there is an exact sequence 
  \begin{equation}\label{monproj}
  1\rightarrow T\rightarrow {\rm Mon}(X)\rightarrow  {\rm Mon}^2(X) \rightarrow 1
  \end{equation}
  with $T$ a finite group. Moreover, $T$ is trivial if \eqref{2aut}  is injective (cf.~\cite[Corollary 7.3]{Ve13}). 
  
 Furthermore, given a polarized hyperk\"ahler manifold $(X,H)$,  the {\it polarized monodromy group} $${\rm Mon}^2(X,H)\subseteq {\rm Mon}^2(X)$$ is defined to be the stabilizer of $c_1(H)$. Then ${\rm Mon}^2(X,H)$ is isomorphic to an arithmetic subgroup  of $\mathrm{O}(H^2_{\rm prim}(X,\ZZ))$ (cf.~\cite[Theorem 3.4]{Ve13}) via a given marking. It has been shown by Markman \cite[Proposition 7.1]{Mar11} that the image is independent of the marking.
  
 %The proof of Proposition \ref{dec} relies on Markman's work on the monodromy groups of $\mathrm{K3}^{[n]}$-type hyperk\"ahler manifolds in \cite{Mar08}, where it has been shown that the Zariski closure $\overline{{\rm Mon}^2(X)}=G_X$ and  the quotient ${\rm Mon}(X)\rightarrow {\rm Mon}^2(X)$ has finite kernel.

\subsection{Group actions on cohomology} Let $G_X$ be the $\QQ$-algebraic group associated to $\SO(\Lambda )$. It acts naturally on $H^2(X,\QQ)$ 
through to the standard representation.  As $G_X$ is a connected component of the Zariski closure of $\Mon^2(X)$ and $\Mon(X)\rightarrow \Mon^2(X)$ has finite kernel, the monodromy action of $\Mon(X)$ also gives rise to an action of $G_X$ on $H^\ast(X)$ of $X$ via automorphisms (cf.~\cite[Proposition 4.1]{HHT12}). 
  
The $G_X$ action also respects the Hodge structure. For instance,  the trivial summands in $H^\ast(X,\QQ)$ are  Hodge classes. In particular, the Chern classes $c_i(T_X)$ of the tangent bundle ly in a trivial summand of $H^{2i}(X,\QQ)$ and $c_i(T_X) =0$ is zero when $i$ is odd.

\begin{remark}
The theory of Lefschetz modules, developed by Verbitsky \cite{Ver95} and Looijenga-Lunts \cite{LL}, provides another  action of the group $$\widetilde{G}_X:=\mathrm{Spin} (H^2 (X , \QQ ))$$ on  $H^\ast(X,\QQ)$. The action also preserves the ring structure and on the even cohomology it factors through a representation 
 \begin{equation} \label{rho}
  \rho : G_X\to \mathrm{Aut} (H^{\rm even} (X , \QQ ))
 \end{equation}
 that coincides with the standard representation on $H^2(X,\QQ)$. Therefore, the two representation of $\Mon^2(X)$ in $\GL(H^\ast(X,\QQ))$ agree on  a finite index subgroup (cf.~\cite[\S 4.6]{Mar08}). 
 \end{remark}

%%%%%%% (Can be added later)  Furthermore, if we only consider the cohomology classes generated by $H^2(X,\QQ)$, we obtain the following result due to Verbitsky \cite{Ver95}. \begin{proposition}Let $\bar{H}^\ast (X,\QQ)\subseteq H^\ast(X,\QQ)$be the subring generated by the second cohomology via cup product. Then\begin{equation}\bar{H}^{2i}(X,\QQ)\cong  \begin{cases} \mathrm{Sym}^{i}H^2(X,\QQ) & \mbox{if } i< n\\ \mathrm{Sym}^{2n-i}H^2(X,\QQ) & \mbox{if } i\geq n.\end{cases}\end{equation}	\end{proposition}

\subsection{Beauville-Voisin classes} 
We now recall Beauville and Voisin's work on {\it weak splitting property} of algebraic hyperk\"ahler manifolds, which is observed from the following result on the Chow ring of K3 surfaces. 

\begin{theorem}[Beauville-Voisin \cite{BV04}]
	Let $X$ be a smooth projective K3 surface. Then there exists a canonical zero cycle $c_X\in \CH_0(X)$ of degree one such that
\begin{itemize}
\item $c_X$ is represented by a point on a rational curve on $X$
\item  for any divisors $D_1,D_2\in \Pic(X)$, the intersection $D_1\cdot D_2$ is proportional to $c_X$ in $\CH_0(X)$. 
\item $c_2(T_X)=24c_X$. 
\end{itemize}
 In particular, the natural map 
\begin{equation}\label{wsp}
 \mathrm{DCH}^\ast(X)\rightarrow H^\ast(X,\QQ)
\end{equation}
is  injective. 
\end{theorem}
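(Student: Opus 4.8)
The plan is to prove the Beauville--Voisin theorem by first producing the canonical zero-cycle $c_X$ and establishing its three asserted properties, then deducing injectivity of \eqref{wsp} as a formal consequence. First I would fix a rational curve on $X$ and define $c_X$ to be the class in $\CH_0(X)$ of any point lying on such a curve. The first point to verify is that this is well-defined, i.e.~independent of the chosen rational curve and of the chosen point on it: any two points on a single rational curve are rationally equivalent (a rational curve being dominated by $\PP^1$, on which all degree-one cycles are equivalent), and to compare points on \emph{different} rational curves one uses the fact that a projective K3 surface contains a one-parameter family of rational curves sweeping out the surface, so that the various point classes can be connected through incidences. This gives a single canonical degree-one class $c_X$.

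The heart of the argument is the second bullet, the statement that $D_1\cdot D_2 \in \ZZ\, c_X$ for all divisors $D_1,D_2\in\Pic(X)$. The key idea I would use is that every divisor class on a K3 surface can be written as a combination of classes of rational curves, or at least that the intersection pairing can be reduced to intersections supported on rational curves; since a point on a rational curve is by construction rationally equivalent to $c_X$, any intersection product landing on such curves is an integer multiple of $c_X$. Concretely, I would argue that the subgroup of $\CH_0(X)$ generated by the images of the intersection products $D_1\cdot D_2$ is contained in $\ZZ\, c_X$ by reducing to the case where $D_1,D_2$ are effective and meet transversally in points lying on rational curves, invoking the abundance of rational curves on $X$ to move the divisors into such position. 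The degree considerations then pin down the integer coefficient via the intersection number $(D_1\cdot D_2)$ in cohomology.

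For the third bullet, $c_2(T_X)=24\,c_X$, I would use that $c_2(T_X)$ has degree equal to the topological Euler characteristic $\chi(X)=24$ for a K3 surface, so it suffices to show $c_2(T_X)$ is \emph{proportional} to $c_X$ in $\CH_0(X)$; the constant is then forced to be $24$ by taking degrees. Proportionality can be obtained by expressing $c_2(T_X)$ through the Noether--Lefschetz / Riemann--Roch machinery in terms of self-intersections of divisor classes (for instance via $c_2(T_X)=12\,\td_2 - \tfrac12 c_1^2$-type relations, which for K3 collapse since $c_1=0$), thereby reducing it to intersection products of divisors already known to lie in $\ZZ\, c_X$ by the second bullet. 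This is the step I expect to require the most care, since it depends on having enough rational curves and on the precise structure of $\CH_0$ of a K3, which is where the Beauville--Voisin argument is genuinely subtle rather than formal.

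Finally, the injectivity of $\mathrm{DCH}^\ast(X)\to H^\ast(X,\QQ)$ follows formally from the preceding properties. The ring $\mathrm{DCH}^\ast(X)$ is generated by divisor classes, so it lives in codimensions $0,1,2$; in codimension $0$ and $1$ the cycle class map to $H^0$ and $H^2$ is injective by general principles (degree, resp.~the fact that numerical and homological equivalence coincide for divisors on a surface with the cohomology class determining the divisor class modulo nothing in $\CH^1$ of a K3, since $\Pic(X)\hookrightarrow H^2(X,\ZZ)$ is injective as $H^1(X,\cO_X)=0$). In codimension $2$, the second bullet says every element of $\mathrm{DCH}^2(X)$ is a rational multiple of $c_X$, and since $c_X$ has nonzero degree its image in $H^4(X,\QQ)$ is nonzero; hence the map is injective there as well. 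Assembling the three graded pieces yields injectivity of \eqref{wsp}, completing the proof.
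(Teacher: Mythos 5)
The paper does not prove this statement at all: it is quoted directly from Beauville--Voisin \cite{BV04}, so your attempt has to be measured against the argument of that paper. Doing so reveals two genuine gaps, one of them fatal. The first is in your well-definedness argument for $c_X$, which invokes ``a one-parameter family of rational curves sweeping out the surface.'' No such family exists: a surface swept out by an algebraic family of rational curves is uniruled, and a K3 surface is not uniruled (it carries a nowhere-vanishing holomorphic $2$-form). Rational curves on a K3 are rigid and form a countable set, not a family. The tool that actually drives all of \cite{BV04} is the Bogomolov--Mumford theorem: every ample linear system contains a \emph{connected} member which is a sum of rational curves. Given two rational curves $C_1$, $C_2$, such a member meets both (by ampleness), and chaining point classes through its connected union of rational components identifies the classes of points of $C_1$ and of $C_2$. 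The same theorem, rather than any ``moving'' argument, underlies the second bullet: one writes an arbitrary divisor class as a difference of very ample classes, hence as a $\ZZ$-combination of rational curves; then $C\cdot C'$ for distinct rational curves is a zero-cycle supported on a rational curve, and the self-intersection $C\cdot C$ --- a case your sketch ignores --- is handled by noting it equals $i_*i^*\cO_X(C)$, again a zero-cycle supported on the rational curve $C$. Your alternative of moving $D_1,D_2$ until they meet in points lying on rational curves cannot work: rational equivalence does not allow you to force intersection points into a countable union of curves.

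The second and more serious gap is the proposed proof of $c_2(T_X)=24\,c_X$. Noether's formula $\chi(\cO_X)=\tfrac{1}{12}(c_1^2+c_2)$, and Riemann--Roch generally, are identities of \emph{degrees} (equivalently, of cohomology classes); they do not express the cycle $c_2(T_X)\in\CH_0(X)$ in terms of products of divisors. If such a formal reduction existed the theorem would be trivial, whereas $\CH_0(X)$ of a K3 surface is infinite-dimensional in Mumford's sense, and the fact that the canonical cycle $c_2(T_X)$ lies on the line $\ZZ c_X$ is precisely the deep point of \cite{BV04}. Their actual proof requires new geometric input: again via Bogomolov--Mumford, $X$ is covered by a one-parameter family of curves of geometric genus one; pulling $T_X$ back to the normalized family, comparing with the relative tangent sheaf, and pushing forward reduces $c_2(T_X)$ to divisor intersections plus cycles supported on rational curves, to which the earlier bullets apply. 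Nothing in your sketch supplies this, so the third bullet --- and with it the theorem --- remains unproved. (Your final deduction of the injectivity of $\mathrm{DCH}^\ast(X)\rightarrow H^\ast(X,\QQ)$ from the three bullets, degree by degree, is correct and is indeed the standard formal consequence.)
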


Beauville has pointed out that this can be understood in a broader framework of Bloch-Beilinson-Murre filtration, the so called {\it weak splitting property}, and he conjectures that this can be generalized to hyperk\"ahler manifolds, i.e. \eqref{wsp} holds when $X$ is hyperk\"ahler. 
This was later strengthen by Voisin \cite{Vo12} via involving the Chern classes of the tangent bundle $T_X$, i.e. replacing $\mathrm{DCH}^\ast(X)$ in \eqref{wsp} by the subalgebra $\BV^\ast(X)\subseteq \CH^\ast(X)$  generated by  divisor classes and  $c_i(T_X)$.  We will call the elements in $\BV^\ast(X)$ the {\it  Beauville-Voisin (BV) classes } of $X$.
 
The conjecture has been confirmed by Voisin \cite{Vo08} and Fu \cite{Fu15} when $X$ is a K3 surface, Fano variety of lines of a cubic fourfold,  generalized Kummer varieties and $S^{[n]}$ if $n\leq 2b_{2,tr}(S)+4$, where $b_{2,tr}(S)$ is the second Betti number of $S$ minus its Picard number. In these cases, there exists a canonical zero cycle $c_X\in \CH_0(X)$ such that $\BV^n(X)$ is spanned by $c_X$. In general, the existence of $c_X$ remains widely open.  Note that if the canonical cycle $c_X$ exists, then the top Chern class $c_{2n}(T_X)$ is a multiple of $c_X$.

\section{Moduli space of hyperk\"ahler manifolds}

\subsection{Moduli of polarized hyperk\"ahler manifolds} In the sequel we will always assume that a $2n$-dimensional hyperk\"ahler manifold $X$ of type $\Lambda$ has
a primitive polarization $H$, i.e. $c_1(H)\in H^2(X,\ZZ)$ is primitive.  In order to discuss the moduli space of polarized hyperk\"ahler manifolds,  we shall choose a polarization type, i.e. an $\mathrm{O}(\Lambda)$-orbit of a primitive vector $h\in\Lambda$. We say that $(X,H)$ is {\it $h$-polarized} if  $\varphi(c_1(H))=h$ for a given  marking $\varphi:H^2(X,\ZZ)\cong \Lambda$.

Consider the moduli stack of $h$ primitively polarized hyperk\"ahler manifolds of dimension $2n$ and type $\Lambda$ with a given Fujiki invariant, which associates to a scheme $T$ (over $\CC$)  the set of isomorphism classes of flat families of primitively polarized hyperk\"ahler manifolds over $T$.
This stack in  general is  disconnected with only finitely many   connected components.  Let $\cF_h$  be a connected component of this stack.  Then a standard result is  
\begin{proposition}
 $\cF_h$ is a smooth Deligne-Mumford quotient stack and it can be coarsely represented by a quasi-projective variety $F_h$. 
\end{proposition}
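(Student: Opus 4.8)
The plan is to establish the three assertions—smoothness, the Deligne-Mumford property with quotient presentation, and the existence of a quasi-projective coarse space—by realizing $\cF_h$ as (a quotient of) a period domain via the Torelli theorem, and then invoking the Baily-Borel theory for arithmetic quotients of bounded symmetric domains. First I would recall the construction of the period map. Fixing a marking identifies $H^2(X,\ZZ)$ with $\Lambda$ and sends $c_1(H)$ to $h$, so the primitive cohomology is identified with $h^\perp \subseteq \Lambda$, a lattice of signature $(2, b_2-3)$. The period domain
\[
\Omega_h = \{ [\sigma] \in \PP(h^\perp \otimes \CC) : q(\sigma) = 0,\ q(\sigma, \bar\sigma) > 0 \}
\]
is a Hermitian symmetric domain of type $IV$ (a disjoint union of two copies of the associated bounded domain). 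The polarized monodromy group $\Gamma := \mathrm{Mon}^2(X,H)$, which by the material recalled in \S2.3 is an arithmetic subgroup of $\mathrm{O}(h^\perp)$, acts properly discontinuously on $\Omega_h$.

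Next I would use the Torelli theorem for polarized hyperk\"ahler manifolds (the global Torelli theorem of Verbitsky, with Markman's refinements, cited as \cite{Ve13, Mar11} in the excerpt) to identify $\cF_h$ with the arithmetic quotient stack $[\Gamma \backslash \Omega_h]$. Concretely, the universal local deformation theory of a polarized hyperk\"ahler manifold shows the local period map is an isomorphism onto an open subset of $\Omega_h$ (the local Torelli theorem, via unobstructedness of deformations and the fact that $H^1(X, T_X) \cong H^1(X, \Omega^1_X) = H^{1,1}$), whence $\cF_h$ is smooth of dimension $b_2 - 3$ and the period map is \'etale. The global Torelli theorem then shows the period map identifies the moduli stack with the global quotient $[\Gamma \backslash \Omega_h]$.

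Granting this identification, the remaining structural claims follow from standard theory. Smoothness is immediate from the local isomorphism with $\Omega_h$. For the Deligne-Mumford property: because $X$ is cohomologically rigidified in the relevant cases—or more precisely because $\Aut(X,H)$ injects into $\mathrm{O}(H^2(X,\ZZ))$ modulo a finite group, and the kernel of \eqref{aut} is finite—the automorphism groups of the objects are finite, which is exactly the condition making the stack Deligne-Mumford; properness of the $\Gamma$-action gives separatedness. Since $\Gamma$ acts properly discontinuously on $\Omega_h$ with finite stabilizers, $[\Gamma \backslash \Omega_h]$ is a smooth separated DM quotient stack. Finally, for the coarse space: the quotient $\Gamma \backslash \Omega_h$ is a normal complex-analytic space, and by the Baily-Borel theorem it carries a canonical structure of quasi-projective variety (as a Zariski-open subset of the projective Baily-Borel compactification). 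This quasi-projective variety $F_h$ is the coarse moduli space, with the canonical map $\cF_h \to F_h$ being the coarse-space morphism.

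The main obstacle is the precise passage from the abstract moduli stack to the arithmetic quotient, i.e. making the global Torelli theorem do its work at the level of stacks rather than just coarse points. One must be careful that the moduli stack's objects, their automorphisms, and their families correspond correctly to $\Gamma$-orbits and stabilizers in $\Omega_h$; in particular the fact that the polarized monodromy group is the \emph{correct} arithmetic group (independent of marking, by Markman's \cite[Proposition 7.1]{Mar11}) is what guarantees the period map descends to an isomorphism of stacks rather than merely a finite-to-one correspondence. Once this identification is secured, everything else is an application of well-known results (Baily-Borel, plus the standard criterion for the DM property), so I expect the bulk of the genuine work to reside in citing and assembling the Torelli-type statements in the form \eqref{monproj} and \S2.3 provide.
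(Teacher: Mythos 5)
Your overall strategy—period map via Torelli, then Baily--Borel for quasi-projectivity—is exactly the paper's (its own proof is two sentences: ``same as for K3 surfaces'' plus Torelli and Baily--Borel). But your execution contains a genuine overstatement at the central step: you claim the global Torelli theorem ``identifies the moduli stack with the global quotient $[\Gamma\backslash\Omega_h]$.'' It does not. For \emph{ample} primitively polarized hyperk\"ahler manifolds the period map is only an \emph{open immersion} $F_h\hookrightarrow \Gamma_h\backslash D$ (this is the paper's \eqref{torelli}, quoting Markman); the image omits the walls where the class $h$ would be big and nef but not ample, and the paper's remark immediately after \eqref{torelli} stresses that one gets an isomorphism only for \emph{quasi-polarized} manifolds. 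Moreover, even over its image, $\cF_h$ need not coincide with the quotient \emph{stack} $[\Gamma_h\backslash D]$: the automorphism group of an object can strictly exceed the corresponding stabilizer in $\Gamma_h$, because the kernel of the map \eqref{2aut} (equivalently, the failure of cohomological rigidity) is nontrivial for the generalized Kummer and {\bf OG6} deformation types, as recalled in \S 2.2--2.3 of the paper. So the asserted stack isomorphism is false in general, and your smoothness/DM/coarse-space conclusions cannot be routed through it as stated.

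The proposition itself survives because none of its three assertions actually needs that identification. Smoothness is local: unobstructedness of deformations (Bogomolov--Tian--Todorov) plus local Torelli show the polarized deformation space is smooth of dimension $b_2-3$. The Deligne--Mumford property needs only that $\Aut(X,H)$ is finite for each polarized object (true for any polarized manifold with trivial canonical bundle), not that these groups match stabilizers in $\Gamma_h$; the coarse space $F_h$ then exists by Keel--Mori. Finally, for quasi-projectivity you should argue that the open immersion \eqref{torelli} has \emph{Zariski-open} image in the Baily--Borel quasi-projective variety $\Gamma_h\backslash D$ — the complement is a finite union of Heegner-type (wall) divisors, finiteness coming from the finitely many monodromy orbits of wall classes — so $F_h$ inherits a quasi-projective structure. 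If you repair the Torelli step to ``open immersion with Zariski-open image'' and decouple the DM argument from the quotient-stack claim, your proof becomes a correct, fleshed-out version of the paper's.
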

\begin{proof}
The proof is the same as in the case of K3 surfaces. The quasi-projectivity of $F_h$ follows from the Torelli theorem and the standard result of Baily-Borel \cite{BB66}. 
\end{proof}

\begin{remark}
	Here we work over the base field $\CC$ only for simplicity. We also refer to \cite{And96} for the results of moduli space of hyperk\"ahler varieties over fields of characteristic $0$. 
\end{remark}

\subsection{Period map} Let us recall Verbitsky and Markman's Hodge theoretic Torelli theorem for hyperk\"ahler manifolds.
Given a polarization type vector $h\in\Lambda$, the orthogonal completement $\Lambda_h:=h^\perp\subseteq \Lambda$ is a lattice  of signature $(2,b_2(X)-3)$. The period domain $D$ of $h$-polarized hyperk\"ahler manifolds of type $\Lambda$ is a connected component of  
\begin{equation}\label{domain}
D^\pm=\{ x\in \PP(\Lambda_h\otimes \CC)~|~(x,x)=0, (x,\bar{x})>0\},
\end{equation}
seen as a Hermitian symmetric domain of type IV.
Consider the connected component of the coarse moduli space of marked hyperk\"ahler manifolds $$N_h= {(X, H,\varphi)}/\sim$$
parametrizing  marked $h$-polarized hyperk\"ahler manifolds $(X, H,\varphi)$ up to equivalence. Then we have a period map
\begin{equation}\label{period}
\widetilde{\cP}_h: N_h\rightarrow D
\end{equation}
that maps $(X,H,\varphi)$ to the line $[\varphi(\omega_X)]$.

\begin{remark}
One can also consider the moduli (analytic) stack $\widetilde{\cN}_h$ of fully marked $h$-polarized hyperk\"ahler manifolds and denote by $\widetilde{N}_h$ its coarse moduli space.  This also defines a period map $\widetilde{N}_h\rightarrow D$. See \S \ref{Tei} for more details. 
\end{remark}

Write $\Gamma_h:=\Mon^2(X,H)$, then Markman shows that the map \eqref{period} is a $\Gamma_h$-equivariant open immersion (cf.~\cite{Mar11}). To forget the marking, we quotient \eqref{period} on both sides and get

\begin{theorem}\cite[Theorem 8.4]{Mar11}
The period map \eqref{period} induces  an open immersion
\begin{equation}\label{torelli}
\cP_h:F_h\hookrightarrow  \Gamma_h\backslash D.
\end{equation}  
\end{theorem}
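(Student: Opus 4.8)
The plan is to deduce \eqref{torelli} from the marked Torelli theorem by passing to the quotient by $\Gamma_h$. The geometric heart of the matter is already available: by Markman the map $\widetilde{\cP}_h$ of \eqref{period} is a $\Gamma_h$-equivariant open immersion, where $\Gamma_h=\Mon^2(X,H)$ acts on the marked moduli space $N_h$ by changing the marking and on the period domain $D\subseteq\PP(\Lambda_h\otimes\CC)$ through its realization as an arithmetic subgroup of $\mathrm{O}(\Lambda_h)$. So what remains is first to identify the quotient $\Gamma_h\backslash N_h$ with $F_h$, and then to check that descending an equivariant open immersion along this action again produces an open immersion.

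First I would show $F_h\cong\Gamma_h\backslash N_h$. The forgetful map $N_h\to F_h$, $(X,H,\varphi)\mapsto(X,H)$, is $\Gamma_h$-invariant and so factors through $\Gamma_h\backslash N_h$; the point is that its fibres are exactly the $\Gamma_h$-orbits. Given two markings $\varphi,\varphi'$ of the same polarized manifold with $(X,H,\varphi),(X,H,\varphi')\in N_h$, connectedness of $N_h$ lets us join the two points by a path, and parallel transport of $H^2$ along it produces a monodromy operator carrying $\varphi$ to $\varphi'$; since both endpoints carry the polarization $H$, this operator fixes $c_1(H)$ and hence lies in $\Mon^2(X,H)=\Gamma_h$. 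Conversely every $\gamma\in\Gamma_h$ is realized by such a family, so $\gamma\cdot(X,H,\varphi)$ stays in $N_h$ and has the same image in $F_h$. Thus the fibres of $N_h\to F_h$ are precisely the $\Gamma_h$-orbits, giving the identification.

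It then remains to descend the open immersion. Let $U=\widetilde{\cP}_h(N_h)\subseteq D$; equivariance makes $U$ a $\Gamma_h$-stable open set and yields an induced map $\cP_h:\Gamma_h\backslash N_h=F_h\to\Gamma_h\backslash D$. Injectivity is formal: if $\widetilde{\cP}_h(p)$ and $\widetilde{\cP}_h(p')$ lie in one $\Gamma_h$-orbit, say $\widetilde{\cP}_h(p')=\gamma\cdot\widetilde{\cP}_h(p)=\widetilde{\cP}_h(\gamma\cdot p)$, then injectivity of $\widetilde{\cP}_h$ gives $p'=\gamma\cdot p$, so $p,p'$ agree in $F_h$. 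Since $\Gamma_h$ is arithmetic in $\mathrm{O}(\Lambda_h)$ it acts properly discontinuously on the type IV domain $D$, so $\Gamma_h\backslash D$ is a normal analytic space, $D\to\Gamma_h\backslash D$ is open, and the local open-immersion property of $\widetilde{\cP}_h$ descends to exhibit $\cP_h$ as an isomorphism of $F_h$ onto the open subset $\Gamma_h\backslash U$. The main obstacle is that this action is properly discontinuous but \emph{not free}: at periods with extra symmetry one must match the isotropy groups on the two sides---checking that the $\Gamma_h$-stabilizer of a period point coincides with the group of polarized automorphisms acting on $H^2$---so that $\cP_h$ is a genuine local isomorphism of the orbifold (or stack) structures rather than merely a bijective local homeomorphism of coarse spaces. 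The quasi-projectivity asserted elsewhere is independent of this and follows from Baily--Borel.
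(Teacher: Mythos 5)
Your proposal is correct and follows exactly the route the paper takes: the paper simply invokes Markman's result that the marked period map \eqref{period} is a $\Gamma_h$-equivariant open immersion and then ``quotients on both sides'' by $\Gamma_h=\Mon^2(X,H)$ to obtain \eqref{torelli}. Your write-up is a faithful expansion of that one-line argument, filling in the identification $F_h\cong\Gamma_h\backslash N_h$, the formal descent of injectivity and openness, and the isotropy/orbifold subtlety that the paper leaves implicit.
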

The analytic orbifold $\Gamma_h\backslash D$ has an algebraic structure and it is a quasiprojective variety.
In this paper, we may also use $\cP_h$ to denote the map from the stack $\cF_h$ to $\Gamma_h\backslash D$ by abuse of notations.

\begin{remark}
If we consider the coarse moduli space of quasipolarized hyperk\"ahler manifolds, i.e. the line bundle $H$ is big and nef,  the open immersion \eqref{torelli} becomes an isomorphism, see \cite{Huybrechts99}.  
\end{remark}

\subsection{~}\label{Tei} There is an analytic construction of $\cF_h$ via Teichm\"uller theory. As the hyperk\"ahler manifolds only differ by their complex structures, we can write $X$ as $(M,I)$,  where  $M$ is a hyperk\"ahler manifold and  $I$ is a complex structure on $M$.  Then the moduli space $\cF_h$ can be described as  the orbifold $$\mathrm{Tei}_h(M)/\mathrm{MCG}(M,h),$$ given as the quotient of a divisor $\mathrm{Tei}_h(M)$ in a connected component of the  Teichm\"uller space  of $M$ by a subgroup $\mathrm{MCG}(M,h)$ of the mapping class group. 

The space $\mathrm{Tei}_h(M)$  actually coarsely represents the  moduli stack $\widetilde{\cN}_h$ in Remark 3.2.1 and  the image of the  $\rm{MCG}(M,h)$ in $\GL(H^\ast(M,\ZZ))$ is nothing but the polarized monodromy group $\Mon(X,H)$. In this way, most of the results we discussed in this section can be found in \cite{Ve13}  using the language of Teichm\"uller spaces.

\subsection{Level structures}\label{Slevel}  We now introduce the level structures on polarized hyperk\"ahler manifolds to help rigidify our moduli problem.  Just as the case for moduli of curves with level structures, this can be viewed as the quotient of $\mathrm{Tei_h(M)}$ by a finite index subgroup of $\mathrm{MCG}(M,h)$. For polarized hyperk\"ahler manifolds, there are two natural ways to add level structures by taking the kernel of either the map 
\begin{equation}\label{totallevel}
\mathrm{MCG}(M,h)\rightarrow \GL(H^\ast_{\rm{prim}}(M,\ZZ/\ell\ZZ))
\end{equation}
 or the  map
 \begin{equation}
\mathrm{MCG}(M,h) \rightarrow \mathrm{O}(H^2_{\rm{prim}}(M,\ZZ/\ell\ZZ)),
 \end{equation} 
 for $\ell>0$.  In this paper, we mainly concern about the moduli space of polarized hyperk\"ahler manifolds with a level strucutre on the total cohomology, i.e. the quotient of $\mathrm{Tei}_h(M)$ by the kernel of \eqref{totallevel}. 
 
 In the algebraic setting, we say that a  {\it full $\ell$-level structure} on a $h$-polarized hyperk\"ahler $(X,H)$  is an isomorphism $$H^\ast(X,\ZZ/\ell\ZZ)\cong H^\ast(M,\ZZ)\otimes \ZZ/\ell\ZZ,$$
 mapping  the class $c_1(H)$ to $h$. 
  We let $\cF^\ell_h$ be the connected component (associated to $\cF_h$) of the moduli stack of $h$-polarized hyperk\"ahler manifold with a full $\ell$-level structure.  The forgetful map 
$\cF_{h}^\ell\rightarrow\cF_h $
 is finite and \'etale.  
 
  Denote by $$\Mon_\ell (X,H)\subseteq\Mon(X,H)$$  the  polarized monodromy group with a full $\ell$-level structure, which is the image of the kernel of \eqref{totallevel} in $\Mon(X,H)$. If $\ell$ is sufficiently large, the projection to the second cohomology yields an isomorphism
  \begin{equation}
  \Mon_\ell (X,H)\cong \Mon^2_{\ell}(X,H)
  \end{equation} 
 because of \eqref{monproj}. We  let $\Gamma^\ell_h\subseteq \Gamma_h$ be the   image of  $\Mon^2_\ell(X,H)$ via  markings.  Then the coarse moduli space $F_h^\ell$ of $\cF_h^\ell$ is the quotient of  the coarse moduli space of fully marked $h$-polarized hyperk\"ahler manifolds
  by $\Mon^2_\ell(X,H)$ and it admits an open immersion 
 \begin{equation}\label{level-period}
 \cP_\ell: F_h^\ell\rightarrow \Gamma_h^\ell \backslash D.
 \end{equation}
 induced by the period map.

If the kernel of \eqref{aut}  is trivial,  then   $\cF_{h}^\ell$ is represented by $F_{h}^\ell$ for $\ell \gg 1$  because any object in $\cF_{h}^\ell$ has only trivial automorphism  (cf.~\cite[Lemma 1.5.12]{Ri06}). In this case,  $F_{h}^\ell$ carries a universal family in the category of varieties. As mentioned in \S 2.2,  this has been confirmed for all known examples except {\bf OG6}.

\subsection{~}Let us give  a few remarks about the second construction.  Rizov \cite{Ri06} has dealt with the case of  K3 surfaces in adelic language (see also \cite{Ma12,Pe15}), but there is no difficulty to  extend his construction to  hyperk\"ahler manifolds, see e.g.~\cite[\S 3.3]{Ch13}. The resulting moduli space better fits with our notions of Shimura varities defined in Section~\ref{Sec5}.

Write $G=\SO(\Lambda_h)$ and let $K\subseteq G(\AA_f)$ be an open compact subgroup. Recall that there is an injective morphism 
$$\{g\in \SO(\Lambda) \; | \; g(h)=h\}\rightarrow \SO(\Lambda_h).$$ Following \cite[2.2]{Ri06}, we say that $K$ is {\it admissible}  if every element of $K$ can be viewed as an isometry of $\Lambda(\AA_f)$ fixing $h$ and  stabilizing $\Lambda_h(\widehat{\ZZ})$. In this case, we define
a {\it $K$-level structure}  on a $h$ primitively polarized hyperk\"ahler $(X,H)$  as an element of the set 	
\begin{equation}
\left\{ K\backslash \{g\in {\rm Isometry} (\Lambda(\hat{\ZZ}), H^2(X,\hat{\ZZ})(1)) \; | \; g(h)=c_1(H) \} \right\}.
\end{equation}
	
As in \cite[Definition 1.5.16]{Ri06}, one can  consider the moduli stack $\cF_{h,K}$ of primitively $h$-polarized hyperk\"ahler manifolds with a $K$-level structure and  let $F_{h,K}$ be the corresponding coarse moduli space. The period map gives an open immersion 
\begin{equation}
F_{h,K}\hookrightarrow \Gamma_{h,K}\backslash D,
\end{equation}
where $\Gamma_{h,K}=\Gamma_h\cap K$ (cf.~\cite[Proposition 3.2.1]{Ri06}). Again, this  follows from Verbitsky-Markman's Hodge theoretic Torelli theorem on hyperk\"ahler manifolds.

\subsection{Descent of local systems} \label{SDescend}
Let $\pi:\cU_{h}\rightarrow \cF_{h}$ be the universal family of $h$-polarized hyperk\"ahler manifolds over $\cF_{h}$.   We denote by  $$\HH_\pi:=R \pi_\ast \QQ$$  the local system   that fiberwise corresponds to the cohomology algebra  of the fiber.  As we explained in \S 2.3, the speical orthogonal group $\SO(\Lambda_h)$ (up to a finite covering) acts fiberwisely on $\HH_\pi$.  As $\SO(\Lambda_h)$ acts naturally on $\Gamma_h\backslash D$, our goal is to show that the local system $\HH_\pi$ descends to an automorphic local system on $\Gamma_{h}\backslash D$  after taking a finite cover of $\cF_{h}$. We prove it by adding level structures defined in \S \ref{Slevel}. 
\begin{lemma}\label{Descend}
Consider the universal family
\begin{equation}
\pi_\ell:\cU^\ell_h \rightarrow \cF_{h}^\ell
\end{equation}
of h-polarized hyperk\"ahler manifolds with a full $\ell$-level strucutre. When $\ell$ is sufficiently large,  the local system  $\HH_{\pi_\ell}:=R (\pi_\ell)_\ast \QQ $ descends to an automorphic local system $\bH^\bullet$ on $\Gamma_h^{\ell}\backslash D$ via the period map  \eqref{level-period}. 
\end{lemma}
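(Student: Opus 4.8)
The goal is to show that the local system $\HH_{\pi_\ell}=R(\pi_\ell)_\ast\QQ$ on $\cF_h^\ell$ descends, via the open immersion $\cP_\ell\colon F_h^\ell\hookrightarrow \Gamma_h^\ell\backslash D$, to an automorphic local system $\bH^\bullet$ on the locally symmetric space $\Gamma_h^\ell\backslash D$. The essential mechanism is that a local system on a quotient $\Gamma\backslash D$ of a contractible (hence simply connected) period domain $D$ is the same thing as a representation of the orbifold fundamental group $\Gamma$, and "automorphic" means this representation extends to the ambient algebraic group. First I would record that $D$ is a Hermitian symmetric domain of type IV, in particular contractible, so that $\pi_1^{\mathrm{orb}}(\Gamma_h^\ell\backslash D)\cong \Gamma_h^\ell$; thus a local system on $\Gamma_h^\ell\backslash D$ corresponds to a $\Gamma_h^\ell$-representation, and an \emph{automorphic} one corresponds to a representation that is the restriction to $\Gamma_h^\ell$ of a morphism of algebraic groups $\SO(\Lambda_h)\to \GL(V)$ (up to finite cover). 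Via $\cP_\ell$ one then pulls such a local system back to $F_h^\ell$, and its monodromy is the induced $\Gamma_h^\ell$-representation.

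The core of the argument is to identify the monodromy representation of $\HH_{\pi_\ell}$ with one of this automorphic shape. The plan is: (1) by standard topology of families, the monodromy of $\HH_{\pi_\ell}$ is exactly the polarized monodromy group with level, namely $\Mon_\ell(X,H)$, acting on $H^\ast(X,\ZZ)\otimes\QQ$ through its representation on the total cohomology of the fiber. (2) For $\ell$ sufficiently large, the level-$\ell$ condition kills the torsion: by the exact sequence \eqref{monproj} the kernel $T$ of $\Mon(X)\to\Mon^2(X)$ is finite, and by the displayed isomorphism $\Mon_\ell(X,H)\cong\Mon^2_\ell(X,H)$ the projection to the second cohomology becomes an isomorphism, so the monodromy is governed by $\Gamma_h^\ell=\Mon^2_\ell(X,H)$ viewed inside $\mathrm{O}(H^2_{\rm prim}(X,\ZZ))$. (3) By the discussion in \S 2.4, the action of $\Mon^2(X)$ on the \emph{entire} cohomology ring factors (on a finite-index subgroup, which we may absorb into the choice of large $\ell$) through the representation $\rho\colon G_X\to\Aut(H^{\rm even}(X,\QQ))$ of \eqref{rho}, and more generally through the algebraic group $G_X=\SO(\Lambda)$ acting on $H^\ast(X,\QQ)$ by \cite[Proposition 4.1]{HHT12}. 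This is precisely the statement that the monodromy representation of $\HH_{\pi_\ell}$ is the restriction to $\Gamma_h^\ell$ of an algebraic representation of $\SO(\Lambda_h)$, which is what "automorphic" means. Combining these gives the descent of $\HH_{\pi_\ell}$ to an automorphic $\bH^\bullet$ on $\Gamma_h^\ell\backslash D$.

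The main obstacle, and the reason the hypothesis "$\ell$ sufficiently large" is needed, is reconciling the three different groups acting on cohomology: the geometric monodromy $\Mon(X,H)$ on the full integral cohomology, its image $\Mon^2(X,H)$ on $H^2$, and the algebraic group $G_X$ whose representation is genuinely automorphic. These agree only up to a finite kernel $T$ and only after passing to a finite-index subgroup where the Looijenga–Lunts–Verbitsky action $\rho$ coincides with the geometric one (cf.~\cite[\S 4.6]{Mar08}). The careful point is to choose $\ell$ large enough that both the finite kernel $T$ is eliminated (so that the level-$\ell$ monodromy sits faithfully in $\mathrm{O}(H^2)$) and that $\Gamma_h^\ell$ lands inside the finite-index subgroup on which the two actions agree; only then does the monodromy on the \emph{total} cohomology factor algebraically through $\SO(\Lambda_h)$.

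Finally I would note that the descended system is automorphic essentially by construction: the representation $\rho$ of $G_X$ defines, via the standard dictionary between $\Gamma_h^\ell$-representations extending to $\SO(\Lambda_h)$ and $\SO(\Lambda_h)(\AA_f)$-equivariant (automorphic) local systems on $\Gamma_h^\ell\backslash D$, the desired $\bH^\bullet$; and pulling $\bH^\bullet$ back along $\cP_\ell$ recovers $\HH_{\pi_\ell}$ by the identification of monodromy. One should check compatibility with the ring structure (the $G_X$-action respects cup product, as stated in \S 2.4 and the Remark on \eqref{rho}), so that $\bH^\bullet$ is an automorphic local system \emph{of algebras}, matching $R(\pi_\ell)_\ast\QQ$ as such.
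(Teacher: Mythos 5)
Your proposal is correct, and it rests on the same two essential inputs as the paper's proof: for $\ell \gg 1$ the finite kernel $T$ in \eqref{monproj} dies, giving $\Mon_\ell(X,H)\cong\Mon^2_\ell(X,H)=\Gamma_h^\ell$, and the period map is equivariant for this identification. The packaging, however, differs. The paper argues geometrically: it passes to the universal family over the fully marked moduli space $\widetilde{\cN}_h$, observes that the local system is \emph{constant} there and descends to the constant system $D\times H^\ast(M,\QQ)$ on $D$ via \eqref{period}, lifts the $\Gamma_h^\ell$-action to this constant system, and then takes quotients on both sides, so that $\bH^\bullet:=\Gamma_h^\ell\backslash\bigl(D\times H^\ast(M,\QQ)\bigr)$ restricts to $\HH_{\pi_\ell}$ essentially by construction, since \eqref{level-period} is the quotient of \eqref{period}. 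You instead invoke the dictionary between local systems on $\Gamma_h^\ell\backslash D$ (with $D$ contractible) and representations of $\Gamma_h^\ell$, identify the monodromy of $\HH_{\pi_\ell}$ with the $\Gamma_h^\ell$-action on $H^\ast(M,\QQ)$, and conclude descent by matching monodromy representations. Each route buys something: the paper's quotient construction makes the compatibility of the two local systems automatic, whereas your "same monodromy" identification tacitly requires exactly the equivariance that the construction over $\widetilde{\cN}_h$ supplies (this is where a fully detailed version of your argument would end up reproducing the paper's geometry); on the other hand, you make explicit, via \S 2.4, \eqref{rho} and \cite[Proposition 4.1]{HHT12}, why the $\Gamma_h^\ell$-action on the \emph{total} cohomology extends (up to finite index, absorbed into the choice of $\ell$) to an algebraic representation of $G_X$ — i.e.\ why $\bH^\bullet$ genuinely deserves the adjective \emph{automorphic}. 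The paper's proof of the lemma leaves this point implicit, even though it is precisely what is needed later when Theorem \ref{T:531} is applied to $\bH^\bullet$, so your added precision here is a real improvement rather than a detour.
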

\begin{proof} 
	 We can consider the universal fibration $\widetilde{\pi}:U\rightarrow \widetilde{\cN}_h$ over $\widetilde{\cN}_h$. Let $\HH_{\widetilde{\pi}}$ be the associated local system. One can easily see that the local system $\HH_{\widetilde{\pi}}$ is a constant system on $\widetilde{\cN}_h$ and it descends to the constant system $$D\times H^\ast(M,\QQ)$$ on $D$ via the period map \eqref{period} (cf.~\cite{Mar17}). 
	 
For $\ell$ sufficiently large, the action of  $\Gamma_h^\ell\cong \Mon_\ell(X,H)$  on $\widetilde{\cN}_h$ naturally lifts to an action on $\HH_{\widetilde{\pi}}$. 
After taking the quotient by $\Gamma_h^\ell$, it gives the locally constant system $\HH_\pi$ on $\cF^\ell_h$. On the other hand, the map \eqref{level-period} is obtained by passing to the quotient on both sides of \eqref{period}. It follows that  $\HH_\pi$ descends to the automorphic bundle 
$\bH^\bullet:=\Gamma_\ell\backslash (D\times H^\ast(M,\QQ))$ via \eqref{level-period}.
\end{proof}

%\begin{remark}If we regard $F_h$ as an open subset of $D$, there is  an isomorphism \begin{equation}	H^{i}(\cF_h,\HH)\cong H^i(F_h, \HH). 	\end{equation}	via the coarse moduli map. \end{remark}

%\begin{remark} \label{RemakDescend}If we consider the local subsystem $\bar{\HH}_\pi\subseteq \HH_\pi $ that fiberwise corresponds to the subring $\bar{H}^* (-)$ generated by the second cohomology  of the fiber, then  Lemma \ref{Descend} holds for $\bar{\HH}_\pi$ by the same argument.	\end{remark}

\subsection{Moduli of lattice polarized hyperk\"ahler} \label{Slattice}
 We have to introduce the lattice polarized hyperk\"ahler manifolds for later use. The moduli space of lattice polarized hyperk\"ahler manifolds are briefly studied by Camere in \cite{Cam16, Cam15}, which generalize the work of Dolgachev \cite{Dol96} on K3 surfaces. Here we review the basic notions with certain modifications. Let $$j:\Sigma\hookrightarrow \Lambda$$ be a fixed primitive embedding of a lattice $\Sigma$ with  signature $(1, r)$.  An  ample {\it $\Sigma$-polarized hyperk\"ahler  manifold} (with respect to $j$) is a pair $(X,\phi)$,  where $X$ is a hyperk\"ahler manifold and 
$$\phi :  \Sigma\rightarrow \mathrm{NS}(X) \subset H^2 (X , \mathbb{Z})$$
is a primitive lattice embedding satisfying 
\begin{itemize}
	\item  $\phi(\Sigma)$  contains an ample divisor class of $X$,
	\item there exists a marking $\varphi$ such that $\varphi\circ\phi=j$. 
\end{itemize}

If we fix an ample class $h$,  we can define the so called {\it  $h$-ample $\Sigma$-polarized hyperkahler manifold (with respect to $j$)} as a triple $(X,H,\phi)$ satisfying that $(X,H)$ is $h$-polarized and  $\phi(\Sigma)$ contains $c_1(H)$. Denote by $\cF_{\Sigma,h}$ a connected component of the moduli stack of  $h$-ample $\Sigma$-polarized hyperk\"ahler manifolds. It comes with the natural forgetful map $\cF_{\Sigma,h}\rightarrow \cF_h$. 

There is also a Hodge theoretic description of $\cF_{\Sigma,h}$ by Torelli theorem. Regard $\Sigma$ as a sublattice of $\Lambda$ and we consider the restricted period domain  $D_\Sigma\subseteq D$ as a connected component of 
  $$\{ x\in \PP(\Sigma^\perp \otimes \CC)~|~(x,x)=0, (x,\bar{x})>0\}.$$

Let $\Gamma_\Sigma$ be  the stabilizer of $\phi(\Sigma)$ in the monodromy group ${\rm Mon}^2(X)$. It is an arithmetic subgroup of $\mathrm{O}(\Sigma^\perp)$.  As before, let $F_{\Sigma,h}$ be the corresponding coarse moduli space. Then the restriction of  period map on moduli space of marked polarized hyperk\"ahler manifolds defines a map
$$  \cP_{\Sigma}:F_{\Sigma,h} \rightarrow \Gamma_\Sigma\backslash D_{\Sigma}.$$
which is an open immersion and there is a commutative diagram 
\begin{equation}
\xymatrix{ F_{\Sigma,h} \ar[r]^{\cP_\Sigma}\ar[d]^{} &\ \Gamma_\Sigma\backslash D_\Sigma\ar[d]^{}\\ F_h \ar[r]^{\cP_h} & \Gamma_h\backslash D .}
\end{equation}  
  
 \begin{remark}
 	In general, the definition of $\cF_{\Sigma,h}$  depends on the primitive embedding of $\Lambda$. In some cases, such as $\Lambda$ is unimodular,  the primitive embedding of $\Sigma$ to $\Lambda$ is unique up to isometry and $\cF_{\Sigma,h}$ is thus independent of the choice of $j$.   
 \end{remark}

Finally, one can easily see that  the results in \S\ref{Slevel} and \S\ref{SDescend} naturally extend to lattice polarized hyperk\"ahler manifolds. More precisely, concerning the full $\ell$-level structures on  lattice polarized hyperk\"ahler manifolds, we let $\cF_{\Sigma,h}^\ell$ (resp.~$F_{\Sigma,h}^\ell$) be the (coarse) moduli space of $h$-ample $\Sigma$-polarized hyperk\"ahler manifolds with a full $\ell$-level structure. Then there is an open immersion
\begin{equation}
\cP^\ell_\Sigma:F^\ell_{\Sigma,h}\hookrightarrow  \Gamma_\Sigma^\ell \backslash D_\Sigma,
\end{equation}
for some finite index subgroup $\Gamma_\Sigma^\ell\subseteq \Gamma_\Sigma$ and the associated local systems on $F^\ell_{\Sigma,h}$ descend via $\cP^\ell_\Sigma$.

%  See \cite{Cam16} for more details. Similarly, one can see that  the results  in $\S$3.3 and $\S$3.4 also holds for  $\cF_{\Sigma,h}$. That is, the moduli space of $\Sigma$-polarized hyperk\"ahler manifolds with a level structure admits a period map to $\Gamma_{K,\Sigma}\backslash D_\Sigma $ for some finite index subgroup $\Gamma_{K,\Sigma}\subseteq \Gamma_\Sigma$ and Lemma \ref{Descend}, Lemma \ref{Descend2} hold for $\cF_{\Sigma,K}$.  

\section{Cycle classes on moduli of hyperk\"ahler manifolds}

Analogous to the case of K3 surfaces, there  exist natural  cycle classes on $\cF_h$ and its universal family, which comes from either Hodge theory or the geometry of the generic fiber. In this section, we discuss the relations between these cycles and define the tautological ring on the moduli space. Natural conjectures are made motived from MOP conjecture and generalized Franchetta conjecture. 

\subsection{Noether-Lefschetz cycles}
   We first recall the Noether-Lefschetz theory for hyperk\"ahler manifolds. Let $\pi:\cU\rightarrow \cF$ be a family of projective hyperk\"ahler manifolds over $\cF$. When $\cF$ is  a quasiprojective variety, we define the $r$-th  {\it Noether-Lefschetz loci on $\cF$} (with respect to $\pi$)  as 
\begin{equation}
\cN^r(\cF):=\{ b\in \cF~|~\rank (\Pic(\cU_b))\geq r+\rho\}.
\end{equation}
where $\rho$ is the Picard number of the generic fiber of $\pi$.  We can also define the Noether-Lefschetz loci with respect to  the universal family $\pi:\cU_{h}\rightarrow \cF_{h}$ over the moduli stack $\cF_{h}$,  where the irreducible component of the Noether-Lefschetz loci on $\cF_{h}$ will occur as the proper map
\begin{equation}
\cF_{\Sigma,h}\rightarrow \cF_{h},
\end{equation} 
for some Lorentian lattice $\Sigma\hookrightarrow \Lambda$ containing $h$. In either cases, we define 
\begin{equation}
\NL^\ast_\pi(\cF)\subseteq \CH^\ast(\cF)
\end{equation}
as the subalgebra generated by Noether-Lefschetz loci on $\cF$ (cf.~\cite{PY16}). We may omit the subscript $\pi$ when $\cF$ is the moduli stack. 

%For the universal family $\cU_h\rightarrow \cF_h$ of $h$-polarized hyperk\"ahler manifolds, the locus  $\cN^r(\cF_{h} )$ is a union of countably many irreducible substacks and each of them is of the form $$\cN^r(\cF_{h}^\circ )\rightarrow \cF_h$$ for some lattice $\Sigma$ of rank $r$. We call the corresponding class $[\cF_{\Sigma,h}]\in \CH^r(\cF_h)$ an {\it irreducible NL-cycle of codimension $r$} and define the {\it Noether-Lefschetz ring}\begin{equation}\NL^\ast (\cF_h) \subseteq\CH^\ast(\cF_h)\end{equation} to be the $\QQ$-subalgebra generated by  irreducible NL-cycles $[ \cF_{\Sigma,h}]$ on $\cF_h$ for all possible $\Sigma$. Similarly, we may also define the irreducible NL-cycles of codimension $i$ on $\cF_{\Sigma,h}$ by taking the irreducible component of the locus $\cN^i(\cF_{\Sigma,h})$.  An important fact is the following

\begin{proposition}[see~\cite{BLMM16}] \label{NLconj} The first Chern class of the Hodge line bundle is in the span of NL-divisors. Moreover,  let $m_\Lambda=\rank ~\Lambda$ be the second Betti number of hyperk\"ahler manifolds in $\cF_{\Sigma,h}$ and assume that $m_\Lambda-\rank (\Sigma)\geq 5$, then $$\NL^1(\cF_{\Sigma,h})=\Pic (\cF_{\Sigma,h}).$$
In particular, we have an inclusion $\mathrm{DCH}^\ast(\cF_{\Sigma,h})\subseteq\NL^\ast(\cF_{\Sigma,h})$.
\end{proposition}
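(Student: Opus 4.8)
The plan is to transfer the statement to the associated orthogonal modular variety via the Torelli theorem and then feed it into the surjectivity of the Kudla--Millson lift established in \cite{BLMM16}. First I would record the relevant signatures: since $\Lambda$ has signature $(3,m_\Lambda-3)$ and $\Sigma$ has signature $(1,\rank\Sigma-1)$, the orthogonal complement $\Sigma^\perp$ has signature $(2,m_\Lambda-\rank\Sigma-2)$. Thus under the hypothesis $m_\Lambda-\rank\Sigma\geq 5$ the connected Shimura variety $\Gamma_\Sigma\backslash D_\Sigma$ attached to $\SO(\Sigma^\perp)$ has dimension $m_\Lambda-\rank\Sigma-2\geq 3$, which is precisely the range in which \cite{BLMM16} applies. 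The period map of \S\ref{Slattice} gives an open immersion $\cP_\Sigma:F_{\Sigma,h}\hookrightarrow \Gamma_\Sigma\backslash D_\Sigma$, under which each irreducible NL-locus on $F_{\Sigma,h}$ is the restriction of a special (Heegner) divisor $Z(m)$ on the Shimura variety, and the Hodge line bundle is the restriction of the automorphic line bundle $\cL$ whose sections are modular forms. All Picard groups are taken with $\QQ$-coefficients, so the distinction between $\cF_{\Sigma,h}$ and its coarse space $F_{\Sigma,h}$ is harmless.

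Next I would reduce the computation of $\Pic(F_{\Sigma,h})$ to that of $\Pic(\Gamma_\Sigma\backslash D_\Sigma)$. The complement of the image of $\cP_\Sigma$ is a union of divisors along which the chosen polarization ceases to be ample; these walls are themselves special divisors and hence lie in $\NL^1$. Since restriction to a Zariski-open subset is surjective on rational Picard groups with kernel generated by the classes of the removed divisors, it suffices to show that $\Pic(\Gamma_\Sigma\backslash D_\Sigma)$ is spanned by special divisors together with $c_1(\cL)$, and that $c_1(\cL)$ is itself such a combination.

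The main input is the theorem of \cite{BLMM16}: for a lattice of signature $(2,n)$ with $n\geq 3$, the rational Picard group of the associated orthogonal Shimura variety is spanned by special divisors. At the level of proof this rests on interpreting the relevant part of $H^{1,1}$ through the automorphic spectrum of $\mathrm{O}(2,n)$ and showing, via Arthur's classification, that every automorphic representation contributing in this degree is a theta lift from the dual pair $\mathrm{O}(2,n)\times\mathrm{SL}_2$; the Kudla--Millson correspondence then identifies the resulting classes with special cycles, yielding surjectivity. For the Hodge class I would use that the Kudla--Millson (equivalently Borcherds) generating series $\sum_{m\geq 0}Z(m)\,q^m$ is a modular form valued in $H^2$, whose constant term $Z(0)$ is a nonzero multiple of $c_1(\cL)$; modularity forces $Z(0)$ into the span of the positive-index coefficients $Z(m)$, $m\geq1$, which are genuine NL-divisors. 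This proves the first assertion, and combined with the reduction above it gives $\NL^1(\cF_{\Sigma,h})=\Pic(\cF_{\Sigma,h})$. The inclusion $\mathrm{DCH}^\ast(\cF_{\Sigma,h})\subseteq\NL^\ast(\cF_{\Sigma,h})$ is then immediate, since $\mathrm{DCH}^\ast$ is generated as a ring by divisor classes, all of which now lie in $\NL^1\subseteq\NL^\ast$.

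I expect the genuine obstacle to be the surjectivity of the theta lift onto $\Pic$ --- the deep automorphic step internal to \cite{BLMM16} --- rather than the reductions, which are formal. A secondary technical point is to verify carefully that the arithmetic group $\Gamma_\Sigma$, defined as the stabilizer of $\phi(\Sigma)$ in $\Mon^2(X)$, together with the integral structure on $\Sigma^\perp$, meets the hypotheses under which the Kudla--Millson machinery and the modularity of the generating series are valid; this is where the identification of NL-loci with honest special divisors must be pinned down.
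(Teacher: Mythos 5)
Your proposal is correct and takes essentially the same route as the paper: pass to the coarse space (harmless with $\QQ$-coefficients), use surjectivity of $\cP_{\Sigma}^* : \Pic (\Gamma_\Sigma \backslash D_\Sigma ) \to \Pic (F_{\Sigma,h})$ to reduce to the orthogonal Shimura variety, and invoke the main theorem of \cite{BLMM16}, whose hypothesis $\dim D_\Sigma \geq 3$ is exactly your (correctly computed) condition $m_\Lambda - \rank (\Sigma) \geq 5$. The only difference is that you unpack what the paper cites as a black box, namely the theta-lift surjectivity inside \cite{BLMM16} and the modularity argument placing the Hodge class in the span of the positive-index special divisors.
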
	
Let us roughly explain the proof. It actually relies on the study of Picard groups on the coarse moduli space $F_{\Sigma,h}$ as we have $\Pic(\cF_{\Sigma,h})\cong \Pic(F_{\Sigma,h})$.   
In  \cite[Theorem 3.8]{BLMM16},  we deal with orthogonal Shimura varieties whereas $F_{\Sigma,h}$ is only an open subset of $\Gamma_\Sigma \backslash D_\Sigma$. However the map 
$$\cP_{\Sigma}^* : \Pic (\Gamma_\Sigma \backslash D_\Sigma ) \to \Pic (F_{\Sigma,h})$$ 
is onto and we are easily reduced to $\Gamma_\Sigma \backslash D_\Sigma$, see Proposition \ref{P423} below for details. Then we can conclude the assertion by applying \cite[Theorem 3.8]{BLMM16}.  Here, the assumption $m_\Lambda-\rank (\Sigma)\geq 5$ or equivalently $\dim D_\Sigma \geq 3$ is a necessary condition  for us to apply \cite[Theorem 3.8]{BLMM16}.

%The latter implies that the subspace $H^{1,1} (\widetilde{\Gamma}_\Sigma \backslash D_\Sigma) \subset H^2 (\widetilde{\Gamma}_\Sigma \backslash D_\Sigma , \mathbb{C})$ is defined over $\mathbb{Q}$ and spanned by cycle classes of  Noether-Lefschetz divisors. Finally the first Chern class map $$c_1 : \Pic (\widetilde{\Gamma}_\Sigma \backslash D_\Sigma) \to H^2 (\widetilde{\Gamma}_\Sigma \backslash D_\Sigma , \QQ)$$ is an injection since $H^1 (\widetilde{\Gamma}_\Sigma \backslash D_\Sigma , \QQ) = 0$, see \cite[Remark 4.2]{BLMM16}. We conclude that $$\Pic (\widetilde{\Gamma}_\Sigma \backslash D_\Sigma) = H^{1,1} (\widetilde{\Gamma}_\Sigma \backslash D_\Sigma) \subset H^2 (\widetilde{\Gamma}_\Sigma \backslash D_\Sigma , \mathbb{C})$$ is spanned by cycle classes of  Noether-Lefschetz divisors. 

\subsection{Kappa classes and tautological ring}\label{3.2} 
In this subsection, we construct the $\kappa$-classes on $\cF_{\Sigma,h}$ by taking the pushforward of BV classes on the generic fiber of  universal families of hyperk\"ahler manifolds of (complex) dimension $n$. 
This idea is initiated  in \cite{MOP15} for the case of K3 surfaces. To work in greater generality, we first define $\kappa$-classes with respect to an arbitrary smooth family.

\begin{definition} \label{D321}
Let $\pi: \cU\rightarrow \cF$ be a smooth projective family of hyperk\"ahler manifolds over a smooth  Deligne-Mumford stack.  
Let $$\cB=\{\cL_0,\ldots, \cL_r\}\subseteq \Pic (\cU)$$ be a collection of line bundles whose image in $\Pic (\cU/\cF)$ form a basis. Then we define the {\it $\kappa$-classes}
	\begin{equation}
	\kappa_{a_0,\ldots, a_r;b_1,\ldots, b_{2n} }^\cB=\pi_\ast(\prod\limits_{i=0}^r c_1(\cL_i)^{a_i} \prod\limits_{j=1}^{2n} c_{j}(T_{\pi})^{b_j} )\in \CH^{m}(\cF),
	\end{equation}	
	where $m=\sum\limits_{i=0}^r a_i+\sum\limits_{j=1}^{2n} jb_j - 2n$.   For simplicity, we may write  
	\begin{equation}
	\kappa^\cB_{a_0,\ldots, a_r}=\kappa^\cB_{a_0,\ldots, a_r;0,\ldots,0}.
		\end{equation}
	and refer to these as the {\it special $\kappa$-classes}. %These are obtained by pushing forward the $\mathrm{DCH}^\ast$-classes of the generic fiber. Nicolas: I commented this out because we have to vary $\cB$ to have this equality no ?
\end{definition}

In particular, we can define $\kappa$-classes and also special $\kappa$-classes on $\cF_{\Sigma,h}$ as the $\kappa$-classes with respect to the universal family  $\pi_\Sigma:\cU_\Sigma\rightarrow \cF_{\Sigma,h}$.  This yields

 \begin{definition}
 The {\it  tautological ring}  $\mathrm{R}^\ast(\cF_{h})\subseteq \CH^\ast(\cF_{h})$  of $\cF_h$ is defined to be the subalgebra generated by the images of all $\kappa$-classes on $\cF_{\Sigma,h}$ via the pushforward maps
 	 \begin{equation}\label{pushforward}
 	 (i_\Sigma)_\ast: \CH_k(\cF_{\Sigma,h} )\rightarrow \CH_k(\cF_h)
 	 \end{equation}
 	   for all $\Sigma$. Instead, if we only use the {\it special} $\kappa$-classes on $\cF_{\Sigma,h}$ via \eqref{pushforward}, we obtain a subring $\mathrm{DR}^\ast(\cF_h)\subseteq \mathrm{R}^\ast(\cF_h)$. \end{definition}

Clearly, we have natural  inclusions
 \begin{equation}
  \NL^\ast(\cF_h)\subseteq \mathrm{DR}^\ast(\cF_h) \subseteq \mathrm{R}^\ast(\cF_h)
  \end{equation} 
from the definition. Note that we can vary the choices $\cB$ of universal line bundles on $\cU_\Sigma$. (These choices differ by the pullback of line bundles on $\cF_{\Sigma,h}$.) It follows that $\mathrm{DR}^\ast(\cF_h)$ and hence $\mathrm{R}^\ast(\cF_h)$ contains all  classes in $\mathrm{DCH}^\ast(\cF_{\Sigma,h})$ via \eqref{pushforward}. Then we propose the following generalization of MOP conjecture: 
\begin{conjecture}[Generalized Tautological Conjecture]\label{GTC}
	$$\NL^\ast(\cF_h)=\mathrm{R}^\ast(\cF_{h}).$$
\end{conjecture}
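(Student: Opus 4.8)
The plan is to establish the nontrivial inclusion $\mathrm{R}^\ast(\cF_h)\subseteq \NL^\ast(\cF_h)$, the reverse inclusion being immediate from the definitions. Since $\mathrm{R}^\ast(\cF_h)$ is generated by the pushforwards $(i_\Sigma)_\ast$ of $\kappa$-classes from the strata $\cF_{\Sigma,h}$, and since these proper pushforwards carry $\NL^\ast(\cF_{\Sigma,h})$ into $\NL^\ast(\cF_h)$, it suffices to prove that every $\kappa$-class $\pi_\ast(\prod_{i} c_1(\cL_i)^{a_i}\prod_{j} c_j(T_\pi)^{b_j})$ on $\cF_{\Sigma,h}$ already lies in $\NL^\ast(\cF_{\Sigma,h})$. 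Thus I would reduce the whole conjecture, stratum by stratum, to a statement about a single lattice-polarized family.

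First I would transport the question to the orthogonal Shimura variety. By the Torelli theorem the period map embeds $F_{\Sigma,h}$ as an open subset of $\Gamma_\Sigma\backslash D_\Sigma$, and by Lemma \ref{Descend}, after passing to a full $\ell$-level structure the local system $\HH_{\pi_\ell}=R(\pi_\ell)_\ast\QQ$ descends to an automorphic local system $\bH^\bullet$ on the Shimura variety. Under this descent a $\kappa$-class, being a fiber integral of a Beauville--Voisin class, is identified with an automorphic cohomology class built from the tautological classes of the universal family; the Chern classes $c_j(T_\pi)$ and the divisor classes $c_1(\cL_i)$ all live in the $G_X=\SO(\Lambda_h)$-subrepresentation of $H^\ast(X,\QQ)$ generated by divisors and Chern classes, i.e. in the span of the Hodge and Lefschetz pieces controlled by the $G_X$-action of \S 2.4.

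The heart of the argument is then to match these automorphic classes with Noether--Lefschetz classes. Using the Verbitsky / Looijenga--Lunts Lefschetz module structure one writes the fiber integral as a polynomial in the invariant classes and the polarization, and I would express the result through the Funke--Kudla--Millson ring of differential forms with coefficients in $\bH^\bullet$, constructed following \cite{FM06}. The point of this construction is that the relevant generators are represented by Kudla--Millson theta lifts, whose cohomology classes are precisely the special (Noether--Lefschetz) cycle classes. Invoking the surjectivity theorems for special cycles on orthogonal Shimura varieties from \cite{BMM16,BLMM16} would then show that each such $\kappa$-class lies in $\NL^\ast(\Gamma_\Sigma\backslash D_\Sigma)$, and hence, by restriction, in $\NL^\ast(\cF_{\Sigma,h})$.

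The hard part will be exactly this last surjectivity input, and it is responsible for two unavoidable limitations. The theorems of \cite{BMM16,BLMM16} are proved via the theta correspondence together with Arthur's classification of the automorphic spectrum, and they guarantee that special cycles span cohomology only in a bounded range of codimensions; beyond that range a $\kappa$-class need not be an NL-class (compare Remark \ref{fail}). Consequently the method as described proves only the cohomological version of Conjecture \ref{GTC}, and only in low degree --- for instance $i\leq 17$ in the K3 case and $\mathrm{K3}^{[n]}$-type with $n\leq 2$. Promoting the resulting cohomological identity to an equality of Chow groups, and extending the automorphic surjectivity past its present range, are the genuinely difficult points that keep the full conjecture open.
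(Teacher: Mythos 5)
The statement you were asked to prove is Conjecture \ref{GTC}, and you should be aware that the paper itself does not prove it: it is stated there as an open conjecture, and what the paper actually establishes are cohomological versions under Betti-number hypotheses (Theorems \ref{main2} and \ref{tauthm}, and Theorem \ref{mainthm1} for K3 surfaces). Read against those partial results, your proposal reconstructs essentially the paper's strategy: the reduction, stratum by stratum, to $\kappa$-classes on a single lattice-polarized family; transport to the orthogonal Shimura variety via the Torelli theorem after adding full level structures so that the local system descends (Lemma \ref{Descend}); representation of the lifted classes by differential forms in the Funke--Kudla--Millson ring (Proposition \ref{FKMtheta}), whose multiplicative structure handles the products appearing in a $\kappa$-class; and finally the theta-lift surjectivity of \cite{BMM16,BLMM16} (Theorem \ref{T:531}) to conclude that the resulting theta classes are special-cycle, hence Noether--Lefschetz, classes. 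Your assessment of the limitation is also the paper's: the surjectivity input is confined to a bounded range of codimensions, which is why the paper only obtains $\NL^\ast_{\rm hom}(\cF_h)=\mathrm{DR}^\ast_{\rm hom}(\cF_h)$ for $m_\Lambda\geq 6$ and $\NL^\ast_{\rm hom}(\cF_h)=\mathrm{R}^\ast_{\rm hom}(\cF_h)$ only for $n<\frac{m_\Lambda-3}{8}$ (hence $\mathrm{K3}^{[n]}$-type with $n\leq 2$), and why Remark \ref{fail} blocks the naive stratum-by-stratum argument in high codimension.

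Two corrections of detail. First, for K3 surfaces the paper's Theorem \ref{mainthm1} is an equality $\NL^\ast_{\rm hom}(\cK_g)=\mathrm{R}^\ast_{\rm hom}(\cK_g)$ in \emph{all} degrees, not only $i\leq 17$ as you suggest: the obstruction in high codimension is circumvented, not inherited, because $\mathrm{R}^{i}_{\rm hom}(\cK_g)=0$ for $i>17$ (both sides vanish, using the vanishing results of the type in Corollary \ref{Cpet} and Section 9 rather than the theta-lift surjectivity). Second, your appeal to the Verbitsky/Looijenga--Lunts Lefschetz-module structure to ``write the fiber integral as a polynomial in invariant classes'' is not how the paper proceeds; the $G_X$-action enters only to guarantee that the total cohomology local system decomposes into pieces attached to finite-dimensional representations of the orthogonal group, so that Theorem \ref{T:531} applies with coefficients. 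But the essential point stands: as a proof of Conjecture \ref{GTC} at the level of Chow groups your argument has exactly the gap you name yourself, and that gap is genuine --- it is the reason the statement remains a conjecture in the paper.
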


\subsection{Cohomological tautological conjecture}As a weaker version of the generalized tautological conjecture, we mainly consider the cohomological tautological conjecture as follows.
Let $H^\ast(\cF_h,\QQ)$ denote the singular cohomology  of $\cF_h$ which is isomorphic to the singular cohomology of the coarse moduli space $F_h$ (cf.~\cite{Be04}). There is a cycle class map 
\begin{equation}
cl: \CH^\ast(\cF_h)\rightarrow H^\ast(\cF_h,\QQ)，
\end{equation}
and we add subscript hom to denote the image of the corresponding ring in $H^\ast(\cF_h,\QQ)$ via the cycle class map. Then we have the cohomological tautological conjecture (CTC).

\begin{conjecture}\label{conj5}
	$\NL^\ast_{\rm hom}(\cF_h)=\mathrm{R}^\ast_{\rm hom}(\cF_{h}).$
\end{conjecture}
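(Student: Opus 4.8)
The inclusion $\NL^\ast_{\rm hom}(\cF_h)\subseteq \mathrm{R}^\ast_{\rm hom}(\cF_h)$ is immediate from the definitions, so the plan is to establish the reverse inclusion: every $\kappa$-class, pushed into $H^\ast(\cF_h,\QQ)$ via \eqref{pushforward}, lies in the span of Noether-Lefschetz classes. First I would reduce to a single stratum. Since the pushforward $(i_\Sigma)_\ast$ carries a Noether-Lefschetz cycle on $\cF_{\Sigma,h}$ (the image of some $\cF_{\Sigma',h}$ with $\Sigma\subseteq\Sigma'$) to a Noether-Lefschetz cycle on $\cF_h$, and since $\mathrm{R}^\ast(\cF_h)$ is generated by the images of the $\kappa$-classes on the various $\cF_{\Sigma,h}$, it suffices to show that each $\kappa$-class $\kappa^\cB_{a_0,\ldots,a_r;b_1,\ldots,b_{2n}}=\pi_{\Sigma,\ast}\big(\prod_{i}c_1(\cL_i)^{a_i}\prod_{j}c_j(T_{\pi_\Sigma})^{b_j}\big)$ of Definition \ref{D321} already lies in $\NL^\ast_{\rm hom}(\cF_{\Sigma,h})$.

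Next I would transport the problem onto the Shimura variety. By Lemma \ref{Descend}, after passing to a full $\ell$-level cover the local system $\HH_\pi=R\pi_\ast\QQ$ descends to the automorphic local system $\bH^\bullet$ on $\Gamma_\Sigma^\ell\backslash D_\Sigma$, and the period map realizes $F_{\Sigma,h}^\ell$ as an open subvariety of this orthogonal Shimura variety. The $\kappa$-class is an ordinary class in $H^{2m}(\cF_{\Sigma,h},\QQ)\subseteq H^{2m}(\Gamma_\Sigma^\ell\backslash D_\Sigma,\QQ)$, and I would compute it using the (Deligne-)degenerate Leray spectral sequence for $\pi_\Sigma$: the integrand is a product of fibrewise Beauville-Voisin classes, and by \S2.4--2.5 the tangent Chern classes $c_j(T_{\pi_\Sigma})$ lie in the trivial $G_X$-summands while the $c_1(\cL_i)$ lie in the N\'eron-Severi part of $\bH^2$. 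Consequently, in the automorphic model the $\kappa$-class is a polynomial in the descended Chern classes of the automorphic (Hodge) bundles; that is, it lies in the subring of $H^\ast(\Gamma_\Sigma^\ell\backslash D_\Sigma,\QQ)$ generated by such Chern classes, and the descent is compatible with the $G_X=\SO(\Lambda)$-action used throughout.

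The analytic heart of the argument is to recognise these Chern/Hodge classes as special cycle classes. Here I would use the Funke-Kudla-Millson construction of Section 7 (following \cite{FM06}): the theta lift of Kudla-Millson Schwartz forms produces closed forms on $\Gamma_\Sigma^\ell\backslash D_\Sigma$ whose classes are linear combinations of Noether-Lefschetz cycles and which span a subring of the cohomology. Starting from Proposition \ref{NLconj} (the first Chern class of the Hodge bundle is an NL-divisor) and using the multiplicativity of this Funke-Kudla-Millson ring, I would match the fibre-integrated Beauville-Voisin product against it, and then invoke the surjectivity theorems for special cycles on orthogonal Shimura varieties from \cite{BMM16, BLMM16} (the analogue of \cite[Theorem 3.8]{BLMM16}, which requires $\dim D_\Sigma\geq 3$, i.e.\ $m_\Lambda-\rank\Sigma\geq 5$ as in Proposition \ref{NLconj}) to conclude that, in the admissible range of cohomological degrees, every such class is in the span of Noether-Lefschetz cycles.

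I expect the main obstacle to be precisely the degree range in which this surjectivity holds. The identification with special cycles is only available below a bound governed by $\rank\Lambda$ and $\rank\Sigma$, and the naive statement genuinely fails for large $r$ (Remark \ref{fail}). To close the gap I would prove a complementary vanishing: using the decomposition of $H^\ast(X,\QQ)$ as a $G_X$-representation together with the fact that Beauville-Voisin classes generate only the trivial summands, the tautological cohomology $\mathrm{R}^i_{\rm hom}(\cF_h)$ must vanish for $i$ above the same threshold (in the K3 case $i>17$, recovering Theorem \ref{mainthm1}). Combining surjectivity below the threshold with this vanishing above it yields $\NL^\ast_{\rm hom}(\cF_h)=\mathrm{R}^\ast_{\rm hom}(\cF_h)$; the delicate point is the bookkeeping that matches these two ranges so that no tautological class is left unaccounted for.
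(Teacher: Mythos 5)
Your strategy coincides with the paper's own route to this statement (Theorems \ref{tauthm} and \ref{main2}): reduce to kappa classes on the lattice-polarized strata, pass to full level covers so that $R\pi_\ast\QQ$ descends along the period map (Lemma \ref{Descend}), identify the Leray components of $c_1(\cL_i)$ and $c_j(\cT_\pi)$ with theta classes (Theorem \ref{T:531}), use the multiplicativity of the Funke--Kudla--Millson ring (Proposition \ref{P:ring}, Proposition \ref{FKMtheta}, Theorem \ref{Tcup}) to handle the fiberwise product, and land in $\mathrm{SC}^\ast_{\rm hom}$, hence in $\NL^\ast_{\rm hom}$ by Proposition \ref{P423}. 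The gap is in your final step, and it is exactly why the statement is left as a conjecture in the paper rather than proved as a theorem. The theta/FKM identification is available only for the Chern classes $c_j(\cT_\pi)$ with $j<\frac14(m_\Lambda-3)$ (this is the hypothesis $b_j=0$ for $j\geq \frac14(m_\Lambda-3)$ in Theorem \ref{tauthm}, whence the hypothesis $n<\frac{m_\Lambda-3}{8}$ in Theorem \ref{main2}), while the only vanishing the paper's methods provide (Corollary \ref{Cpet}) kills algebraic classes only in codimension $\geq \dim\cF_h-1$. These two ranges meet when the fiber dimension $2n$ is small compared to $m_\Lambda$ --- for K3 surfaces the Chern classes stop at $c_2$ and $2<\frac{19}{4}$, which is why Theorem \ref{mainthm1} goes through --- but in general they leave an untouched middle range: for generalized Kummer type, $m_\Lambda=7$, so $\frac14(m_\Lambda-3)=1$ while the vanishing starts in codimension $3$, and codimension-two classes such as $\kappa_{2;0,0,0,1}$, which involve $c_{2n}(\cT_\pi)$, fall strictly between the two thresholds; \S 8.5 of the paper lists precisely these as open. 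So ``the bookkeeping that matches these two ranges'' is not a detail to be checked: it demonstrably fails whenever $n\geq\frac{m_\Lambda-3}{8}$, which covers generalized Kummer, {\bf OG6}, {\bf OG10} and $\mathrm{K3}^{[n]}$ with large $n$.

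Moreover, your proposed proof of the high-degree vanishing is not valid as stated. You want $\mathrm{R}^i_{\rm hom}(\cF_h)=0$ above the surjectivity threshold ``using the decomposition of $H^\ast(X,\QQ)$ as a $G_X$-representation together with the fact that Beauville--Voisin classes generate only the trivial summands''; but that is a statement about the cohomology of the fiber, and it gives no control over cohomology classes on the base $\cF_h$. The vanishing actually used in the paper (namely $\mathrm{R}^i_{\rm hom}(\cK_g)=0$ for $i>17$, which completes the K3 case of Theorem \ref{mainthm1}) comes from a completely different mechanism: Corollary \ref{Cpet}, i.e.\ Zucker's conjecture, hard Lefschetz on $IH^\ast(\overline{Y}_K^{bb},\CC)$, and the result of \cite{GT05} that $\lambda^{b-2}\cdot c(U,g,K)$ vanishes in cohomology when $\dim c(U,g,K)\geq 3$ --- and this operates only in codimension $\geq b-1$, not at the threshold where the theta surjectivity stops (Remark \ref{fail} shows the surjectivity genuinely fails there). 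As written, your argument establishes Conjecture \ref{conj5} only under the same numerical hypotheses as Theorem \ref{main2}; the full conjecture remains open.
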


One of our main results is 
\begin{theorem}\label{main2} Assume that the lattice $\Lambda$ has rank $m_\Lambda\geq 6$, or equivalently $\dim \cF_h\geq 3$. Then we have 
\begin{equation}
\NL^\ast_{\rm hom}(\cF_h)=\mathrm{DR}^\ast_{\rm hom}(\cF_h).
\end{equation}
Moreover,  if $n< \frac{m_\Lambda-3}{8}$,  then
\begin{equation}
\NL^\ast_{\rm hom}(\cF_h)=\mathrm{R}^\ast_{\rm hom}(\cF_h).
\end{equation} In particular,  Conjecture \ref{conj5} holds when $\Lambda$ is of $\mathrm{K3}^{[n]}$-type with $n\leq 2$.

\end{theorem}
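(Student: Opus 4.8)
The plan is to exploit the chain of inclusions $\NL^\ast(\cF_h)\subseteq \mathrm{DR}^\ast(\cF_h)\subseteq \mathrm{R}^\ast(\cF_h)$ recorded after Definition~\ref{D321}: it suffices to prove the reverse inclusions $\mathrm{DR}^\ast_{\rm hom}(\cF_h)\subseteq \NL^\ast_{\rm hom}(\cF_h)$ and, under the stronger hypothesis, $\mathrm{R}^\ast_{\rm hom}(\cF_h)\subseteq \NL^\ast_{\rm hom}(\cF_h)$. Since $\mathrm{R}^\ast(\cF_h)$ (resp.~$\mathrm{DR}^\ast(\cF_h)$) is by definition generated by the pushforwards $(i_\Sigma)_\ast$ of all (resp.~special) $\kappa$-classes on the lattice-polarized loci $\cF_{\Sigma,h}$, and since each $i_\Sigma$ realizes $\cF_{\Sigma,h}$ as an irreducible component of a Noether-Lefschetz locus of $\cF_h$, so that $(i_\Sigma)_\ast$ carries $\NL^\ast_{\rm hom}(\cF_{\Sigma,h})$ into $\NL^\ast_{\rm hom}(\cF_h)$, the whole statement reduces to a single assertion: \emph{every $\kappa$-class on $\cF_{\Sigma,h}$ lies in $\NL^\ast_{\rm hom}(\cF_{\Sigma,h})$ in the appropriate degree range}. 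First I would fix $\Sigma$ and pass to a full $\ell$-level cover $\cF_{\Sigma,h}^\ell$, which is harmless at the level of rational cohomology but makes Lemma~\ref{Descend} (extended to the lattice-polarized setting as in \S\ref{Slattice}) available.

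Next I would compute the $\kappa$-classes cohomologically via the descent of the fiber-cohomology local system. By Lemma~\ref{Descend}, $\HH_{\pi_\Sigma}=R(\pi_\Sigma)_\ast\QQ$ descends to an automorphic local system $\bH^\bullet$ on $\Gamma_\Sigma^\ell\backslash D_\Sigma$, on which $G_X=\SO(\Lambda)$ acts fiberwise through the representation $\rho$ of~\eqref{rho}. I would decompose each graded piece $H^{2k}(X,\QQ)$ into $\SO(\Lambda)$-isotypic components: the tangent Chern classes $c_j(T_\pi)$ lie in the trivial summands, being $G_X$-invariant Hodge classes as noted in~\S2.4, while the relative divisor classes $c_1(\cL_i)$ generate a copy of the standard representation inside $H^2$. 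The fiber integral defining $\kappa^\cB_{a_0,\ldots,a_r;b_1,\ldots,b_{2n}}$ can then be evaluated $G_X$-equivariantly: the factors $c_j(T_\pi)^{b_j}$ enter only through their components in the trivial summand, so the essential content of the pushforward is carried by fiber integrals of products $\prod_i c_1(\cL_i)^{a_i}$ of divisor classes.

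I would then identify these divisor-class fiber integrals with classes in the Funke-Kudla-Millson ring of~\S7. The point is that a fiber integral of a monomial in the tautological divisor classes is computed by pairing against a Kudla-Millson form, so its cohomology class on $\Gamma_\Sigma^\ell\backslash D_\Sigma$ is a theta lift, hence a linear combination of special cycle classes in the sense of~\cite{FM06}; by construction these special cycles are precisely the Noether-Lefschetz classes of $\cF_{\Sigma,h}$. To conclude that all cohomology in the relevant degree arises this way, I would invoke the surjectivity theorems of~\cite{BMM16,BLMM16} recapped in~\S\S5--6: below the stable range for $\Gamma_\Sigma^\ell\backslash D_\Sigma$, the span of special cycles exhausts $H^{2k}$. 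The hypothesis $m_\Lambda\geq 6$, which together with $m_\Lambda-\rank\Sigma\geq 5$ gives $\dim D_\Sigma\geq 3$, places every special $\kappa$-class within this range and yields the first equality $\NL^\ast_{\rm hom}(\cF_h)=\mathrm{DR}^\ast_{\rm hom}(\cF_h)$, in the spirit of the divisor case of Proposition~\ref{NLconj}.

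The main obstacle is the passage from the special $\kappa$-classes to the general ones. The tangent Chern classes $c_j(T_\pi)$ are $G_X$-invariant but lie outside the subalgebra of $H^\ast(X)$ generated by $H^2(X)$ (the Verbitsky component), and including high powers of them---in particular the top class $c_{2n}(T_\pi)$---lets the fiber integral produce base classes of larger codimension $m=\sum_i a_i+\sum_j jb_j-2n$ than the divisor factors alone could reach. The danger is that such a class then has codimension beyond the threshold where~\cite{BMM16,BLMM16} guarantee that special cycles span the cohomology. Controlling this is exactly the role of the numerical hypothesis $n<\frac{m_\Lambda-3}{8}$: it forces the codimension contributed by the tangent classes to remain within the surjectivity range for every admissible $\Sigma$, so that each $\kappa$-class stays a combination of Noether-Lefschetz classes and $\NL^\ast_{\rm hom}(\cF_h)=\mathrm{R}^\ast_{\rm hom}(\cF_h)$ follows. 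For $\mathrm{K3}^{[n]}$-type one has $m_\Lambda=22$ when $n=1$ and $m_\Lambda=23$ when $n\geq 2$, so $n<\frac{m_\Lambda-3}{8}$ holds exactly for $n\leq 2$ (it fails at $n=3$), which is why Conjecture~\ref{conj5} follows precisely in that range.
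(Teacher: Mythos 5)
Your proposal has the same skeleton as the paper's proof of Theorem~\ref{tauthm} (from which Theorem~\ref{main2} is deduced): reduce to the universal family $\cU^\ell_{\Sigma,h}\to\cF^\ell_{\Sigma,h}$, descend the fiber cohomology to an automorphic local system via Lemma~\ref{Descend}, and conclude through the Funke--Kudla--Millson ring and the surjectivity results of \cite{BMM16,BLMM16}. But your central step --- the $G_X$-equivariant reduction of general $\kappa$-classes to ``fiber integrals of products of divisor classes'' --- is false, and it erases exactly the difficulty that the paper's machinery is built to address. What is true is that the \emph{fiberwise} classes $c_j(T_X)$ lie in trivial $\SO(\Lambda)$-summands of $H^{2j}(X,\QQ)$. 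The relative classes $c_j(\cT_\pi)\in H^{2j}(\cU,\QQ)$ are not determined by their fiberwise restrictions: in the decomposition \eqref{Dec} they have components in $H^{p}(\cF,\HH^{2j-p}_\pi)$ for all $p\le 2j$, these components take values in nontrivial subrepresentations of the coefficient system, and they are precisely what produces positive-degree $\kappa$-classes, since $\pi_\ast$ of a product only sees the component of that product in $H^{2m}(\cF,\HH^{4n}_\pi)$. A concrete illustration from \S 4.4 of the paper: $c_1(\cT_\pi)$ has \emph{zero} fiberwise component (as $c_1(T_X)=0$ for a hyperk\"ahler fiber), being the pullback of $-2\lambda$ from the base, and classes such as $\tilde{\kappa}_{2,0,0,1}=4\chi\lambda^2\neq 0$ are built from tangent classes alone; in your scheme their ``essential content'' would be empty. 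The paper instead lifts \emph{each} factor $c_1(\cL_i)$, $c_j(\cT_\pi)$ componentwise to $\bigoplus_{p+q=2j}H^p(Y,\bH^q)$, shows each component is a theta class (Theorem~\ref{T:531}, crucially with nontrivial coefficients $\bE$), represents it by an FKM form (Proposition~\ref{FKMtheta}), multiplies inside the FKM ring (Proposition~\ref{P:ring}, Theorem~\ref{Tcup}), and only then pushes forward, using the unconditional inclusion $H^{2r}_\theta\subseteq\SC^r_{\rm hom}$; indeed \S 8.4 states explicitly that the unresolved status of the higher components of $c_k(\cT_\pi)$ is \emph{the} obstruction to the full conjecture.

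Your reading of the hypothesis $n<\frac{m_\Lambda-3}{8}$ is also the wrong mechanism. It does not bound the codimension of the resulting $\kappa$-class: Theorem~\ref{tauthm} imposes no bound on $m=\sum a_i+\sum jb_j-2n$, and it could not, since already the first equality $\NL^\ast_{\rm hom}(\cF_h)=\mathrm{DR}^\ast_{\rm hom}(\cF_h)$ concerns special $\kappa$-classes of arbitrary codimension. It bounds the \emph{fiber-degree indices} $j\le 2n$ of the Chern classes appearing: each Deligne component of $c_j(\cT_\pi)$ must lie in the range where theta classes exhaust cohomology, which forces $j<\tfrac14(m_\Lambda-3)$. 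Moreover this range is measured against $\dim\cF_h=m_\Lambda-3$, not, as you require, ``for every admissible $\Sigma$'': NL loci of every codimension occur, so no bound on $n$ could keep you inside the surjectivity range of all $\Gamma^\ell_\Sigma\backslash D_\Sigma$ simultaneously. The paper's way out is that $\cT_\pi$ descends to the universal family over the full $\cF^\ell_h$, so its membership in the FKM ring is checked there (where the dimension is maximal) and then pulled back to $\cU^\ell_{\Sigma,h}$; only the $c_1(\cL_i)$ genuinely live over $\cF_{\Sigma,h}$, and being of degree $2$ they only need $\dim\cF_{\Sigma,h}\ge 3$. Finally, your reduction is silent on pushforwards of $\kappa$-classes from $\cF_{\Sigma,h}$ of dimension $\le 2$, where the assertion ``every $\kappa$-class is a Noether--Lefschetz class'' genuinely fails (Remark~\ref{fail}); these must be disposed of separately by vanishing of high-codimension algebraic classes (Corollary~\ref{Cpet} and the discussion in the introduction), a step your argument needs but does not contain.
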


We prove Theorem \ref{main2} in Section 8. Indeed, we will prove a more general result concerning $\kappa$-classes with respect to general  families of hyperk\"ahler manifolds. 
Unfortunately, our results can not imply the full Conjecture 5 for hyperk\"ahler manifolds of  generalized Kummer type, {\bf OG6}, {\bf OG10}, or K3$^{[n]}$-type with large $n$ as their second Betti number is relative small. Instead, we show that a large part of the tautological classes are lying in R$^\ast_{\hom}(\cF_h)$.

\subsection{Examples}\label{GRR} As a completement to Theorem 4.3.1,  we follow \cite{GT05} to  give some examples of $\kappa$-classes on $\cF_h$ with  $a_i=0$ and $b_j\neq 0$ lying in $\NL^\ast(\cF_h)$. For simplicity,  we may write $$\tilde{\kappa}_{b_1,\ldots,b_{2n}}=\kappa_{0,\ldots,0; b_1,\ldots;b_{2n}}. $$ 
Let $\Omega_{\cU_h/\cF_h}^i$ be the relative sheaf of differential $i$-forms and  we denote by $\lambda=c_1(R^0\pi_\ast \Omega^2_{\cU_h/\cF_h})$ the first Chern class of the line bundle  on  $\cF_h$ that fiberwise corresponds to the vector space spanned by the  non-degenerate holomorphic 2-form of the fiber. By \cite[Corollary 8.5]{BLMM16}, we know that $\lambda\in \NL^\ast(\cF_h)$. Easily one can get    $$c_1(R^0\pi_\ast\Omega^{2i}_{\cU_h/\cF_h})=i\lambda\in\NL^\ast(\cF_h). $$
Furthermore, note that there is an isomorphism $ \wedge^i(R^2\pi_\ast \cO_{\cU_h})\cong R^{2i}\pi_\ast \cO_{\cU_h}$ via cup product and $R^{2n}\pi_\ast \cO_{\cU_h}\cong (R^0\pi_\ast\Omega_\pi)^\vee$, this yields  $$c_1(R^{2i}\pi_\ast \cO_{\cU_h})=-i \lambda.$$ 

Using  Grothendieck-Riemann-Roch Theorem, we have 
\begin{equation}\label{GRR}
\begin{aligned}
{\rm ch} (\pi_!\cO_{\cU_h})&= e^{\sum\limits_{i=0}^n  (-i\lambda)}
\\& =  \pi_\ast ({\rm Td}(\Omega^\vee_{\cU_h/\cF_h})).
\end{aligned}
\end{equation} 
Let  $\td_i$ be the $i$-th Todd class of $(\Omega^1_{\cU_h/\cF_h})^\vee=T_{\cU_h/\cF_h}$.  By definition, the class $$\pi_\ast(\td_i)\in \CH^{i-2n}(\cF_h)$$ is  a linear combination of  $\tilde{\kappa}_{b_1\ldots,b_{n}}$ and   we find
\begin{equation}\label{iden}
\pi_\ast(\td_{i+2n})= \sum\limits_{j=0}^n \frac{(-j\lambda)^i}{i!}   
\end{equation}
by comparing terms in \eqref{GRR}  of  degree $i$ at both sides.  The first Chern class $c_1(\Omega^1_{\cU_h/\cF_h})$ is the pull back of a divisor class on $\cF_h$.  By projection formula, this gives 
 \begin{equation}\label{vanish}
 \tilde{\kappa}_{b_1,\ldots,b_{2n}}=0
 \end{equation}
if $\sum\limits_{j\geq 2} jb_j<2n$ or  $\sum\limits_{j\geq 2} jb_j=2n$ with some $b_{2k+1}\neq 0$. When $n=1$, the relations in  \eqref{iden} and \eqref{vanish} suffice to show that all classes $\tilde{\kappa}_{ b_1\ldots,b_{2n}}$ are lying in the subalgebra $\QQ\left<\lambda\right>$ of $\CH^\ast(\cF_h)$ generated by $\lambda$ (cf.~\cite{GT05}). However, it seems  more complicated as $n$ gets larger. 
For instance, let us  compute the first few terms of $\tilde{\kappa}_{b_1,\ldots,b_4}$ (i.e. $n=2$).  Using \eqref{iden}, we have

\begin{itemize}
	\item  $i=0$, $\frac{1}{720}(3\kappa_{0,2,0,0}-\kappa_{0,0,0,1})=3;$
	\item $i=1$, $c_1(\Omega^1_{\cU_h/\cF_h})=2\pi^\ast(\lambda)$; 
	\item  $i=2,~\frac{1}{60480}(\tilde{\kappa}_{0,3,0,0}-9\tilde{\kappa}_{0,1,0,1}-5\tilde{\kappa}_{2,0,0,1}+11\tilde{\kappa}_{2,2,0,0})= \frac{5\lambda^2}{2}$
	 
	 $\vdots$  
\end{itemize}

Let $\chi=\chi(X)$ be the Euler characteristic of  $X\in \cF_h$, e.g. $\chi(S^{[2]})=324$.  It follows that $$\tilde{\kappa}_{0,0,0,1}=\chi, ~\tilde{\kappa}_{0,2,0,0}=720+\frac{\chi}{3},~ \tilde{\kappa}_{2,0,0,1}=4\chi\lambda^2,~\tilde{\kappa}_{2,2,0,0}=(2880+\frac{4\chi}{3})\lambda^2;$$
and 
$$\tilde{\kappa}_{0,3,0,0}-9\tilde{\kappa}_{0,1,0,1}=(\frac{42}{84} +\frac{11\chi}{45360})\lambda^2.$$ 

It would be interesting to know if all such classes are lying in $\QQ\left<\lambda\right>$.

\subsection{Generalized Franchetta conjecture for hyperk\"ahlers}
 Assume that very general members in $\cF_h$ have only the trivial automorphism.  Let $\cF_{h}^\circ\subseteq \cF_{h}$ be the open subset parametrizing the hyperk\"ahler manifolds with only trivial automorphisms and  let $$\pi^\circ:\cU_{h}^\circ \rightarrow \cF_{h}^\circ$$ be the associated universal family. The generalized Franchetta conjecture for $\cU_h^\circ$ is  
 \begin{conjecture}\label{genFranchett}
 Let $\cT_{\pi^\circ}$ be the relative tangent bundle of $\pi^\circ:\cU^\circ_h\rightarrow\cF_h^\circ$. Then for any $\alpha\in \CH^{2n}(\cU_h^\circ)$, there exists $m\in\QQ$ such that $\alpha-mc_{2n}(T_{\pi^\circ})$ supports on a proper subset of $\cF^\circ_h$.
 \end{conjecture}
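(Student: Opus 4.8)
The plan is to reduce the statement to a Franchetta property for zero-cycles on the generic fibre and then attack that property through monodromy together with the weak splitting property of \cite{BV04}, \cite{Vo12}. Write $K=\CC(\cF_h^\circ)$ for the function field and let $X_\eta$ denote the generic fibre of $\pi^\circ$, so that $\CH^{2n}(X_\eta)=\CH_0(X_\eta)$ is the colimit of the groups $\CH^{2n}(\pi^{-1}(W))$ over shrinking open subsets $W\subseteq\cF_h^\circ$. By the localization sequence a class supports on a proper subset of $\cF_h^\circ$ if and only if it restricts to zero in $\CH_0(X_\eta)$. Fixing $m=\deg(\alpha|_{X_\eta})/\chi$, where $\chi=\chi(X)=\deg c_{2n}(T_{X_\eta})$, the conjecture becomes the assertion that the image $\mathrm{GDCH}_0(X_\eta)$ of the restriction map $\CH^{2n}(\cU_h^\circ)\to\CH_0(X_\eta)$ is contained in the line $\QQ\cdot c_{2n}(T_{\pi^\circ})|_{X_\eta}=\QQ\cdot c_X$.

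First I would record that a generically defined zero-cycle is invariant under the polarized monodromy group $\Mon(X,H)$. Indeed the monodromy operators are realized by algebraic self-correspondences of the fibres (cf.\ the action on the motive in \cite{Mar08}), hence act on $\CH_0(X_\eta)$, and this action fixes any class that extends to the total space. Thus $\mathrm{GDCH}_0(X_\eta)$ lies in the $\Mon(X,H)$-invariant part of $\CH_0(X_\eta)$, and the problem is reduced to showing that the degree-zero, monodromy-invariant zero-cycles on $X_\eta$ are rationally trivial.

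For the second step I would invoke the Beauville--Voisin weak splitting property. Under a multiplicative Bloch--Beilinson splitting of $\CH^\ast(X_\eta)$, the graded pieces of $\CH_0$ beyond the degree are governed by the transcendental part of the motive, on which $\Mon(X,H)$ (a Zariski-dense arithmetic subgroup of $\SO(\Lambda)$) acts through the standard and higher tensor representations, without nonzero invariants; hence the invariant part of $\CH_0(X_\eta)$ collapses to $\QQ\cdot c_X$, giving exactly the required inclusion. This argument is unconditional precisely in the cases where $c_X$ is known to exist and $\BV^{2n}(X)=\QQ\cdot c_X$ has been established, namely for K3 surfaces, the Fano variety of lines on a cubic fourfold, generalized Kummer varieties, and $S^{[n]}$ with $n\le 2b_{2,\mathrm{tr}}(S)+4$ (see \cite{Vo08}, \cite{Fu15}); there the conjecture follows after spreading the rational equivalence over a Zariski-open $W$.

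The main obstacle is this second step: controlling $\CH_0(X_\eta)$ up to rational, rather than merely homological, equivalence is the full depth of O'Grady's conjecture, and outside the list above both the existence of the canonical class $c_X$ and the multiplicative weak splitting property of \cite{BV04}, \cite{Vo12} remain open. This is exactly why the present paper proves only the cohomological shadow (Theorem \ref{main3} and \S8.1): on a single fibre the cohomological statement is automatic from $H^{4n}(X_\eta,\QQ)=\QQ$, so the genuine content of the Chow-level conjecture is the Bloch-type vanishing of monodromy-invariant, homologically trivial zero-cycles, which the methods of this paper do not reach.
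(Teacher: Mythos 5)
You set out to prove Conjecture \ref{genFranchett}, but note first that the paper contains no proof of this statement: it is stated as a conjecture, and what the paper actually establishes is only its cohomological shadow (Theorem \ref{main3}, proved as Theorem \ref{mainthm2} in \S 8.1 via Deligne's decomposition \eqref{Dec}, descent of the local system to $\Gamma_h^\ell\backslash D$, and the theta-lift surjectivity of Theorem \ref{T:531}), and even that only under the numerical hypothesis $n<\frac{m_\Lambda-3}{8}$. So there is no paper proof to match your argument against; a complete unconditional proof would be a new result well beyond the paper. Your first step is fine as a repackaging of the problem: localization identifies ``supported on a proper subset'' with vanishing in $\CH_0(X_\eta)$, the normalization $m=\deg(\alpha|_{X_\eta})/\chi$ is legitimate since $\deg c_{2n}(T_X)=\chi\neq 0$, and generically defined zero-cycles are indeed invariant under the relevant symmetry (though your claim that monodromy operators act on Chow groups by algebraic correspondences is itself known only in special cases, e.g.\ for $\mathrm{K3}^{[n]}$-type via Markman; Galois invariance of classes defined over $\CC(\cF_h^\circ)$ would be the safer substitute).

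The genuine gap is your second step. The phrase ``under a multiplicative Bloch--Beilinson splitting'' imports an open conjecture, and the conclusion you draw --- that the $\Mon(X,H)$-invariant part of $\CH_0(X_\eta)$ collapses to $\QQ\cdot c_X$ --- does not follow even in the cases you list as unconditional. What Voisin \cite{Vo08} and Fu \cite{Fu15} prove is the injectivity of $\BV^\ast(X)\to H^\ast(X,\QQ)$, i.e.\ control of the subring generated by divisors and Chern classes; this gives no handle on an \emph{arbitrary} zero-cycle that happens to be monodromy-invariant, because nothing forces such a cycle to lie in $\BV^{2n}(X)$. The decisive sanity check: if your argument were unconditional for K3 surfaces, it would prove O'Grady's Conjecture \ref{ConjGFC} for every genus $g$, whereas the paper records that this is known only for $g\leq 12$ (by case-by-case geometric constructions in \cite{PSY}) and ``remains widely open for large $g$.'' Your closing paragraph correctly diagnoses that the Chow-level statement requires a Bloch-type vanishing the paper's methods do not reach --- but this concession contradicts the claimed unconditional cases, and it also slightly undersells the cohomological theorem: fiberwise triviality from $H^{4n}(X,\QQ)=\QQ$ is indeed automatic, but Theorem \ref{main3} asserts vanishing on $\pi^{-1}(W)$ for a Zariski-open $W$, which is exactly where the Noether--Lefschetz and theta-correspondence machinery of \S\S 5--8 is needed. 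In short: the reduction is sound, the conclusion is conditional on conjectures (multiplicative splitting, or Beauville--Voisin in the strong form discussed in \S 8.4 of the paper), and the intermediate claim of unconditionality is false.
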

 
The conjecture can be stated  for the moduli stack and the assumption of trivial automorphism group on the generic fiber of the universal family can be removed.  In \S\ref{cohfran}, we prove the cohomological version of this conjecture for hyperk\"ahler manifolds whose second Betti number is sufficiently large (with respect to the dimension).

\section{Shimura varieties of orthogonal type} \label{Sec5}
In this section, we discuss Kudla-Millson's construction of special cycles on Shimura varieties of orthogonal type as well as its generalization by Funke and Millson to special cycles with coefficients. We also discuss the connection with Noether-Lefschetz cycles on moduli spaces of polarized hyperk\"ahler manifolds.

\subsection{Shimura varieties of orthogonal type} \label{sc}
Let $V$ be a non-degenerate quadratic space  over $\QQ$ of signature $(2,b)$ and let $G=\SO(V)$ the corresponding special orthogonal group. The group $G(\RR )$ of real points of $G$ is isomorphic to $\SO (2,b)$. We denote by $G(\RR)_+ \cong \SO_0 (2,b)$ the component of the identity; recall that it is precisely the kernel of the spinor norm. Let $K_\RR \cong \SO (2) \times \SO (b)$ be a maximal compact subgroup of $G(\RR)_+$ and let $\widehat{D} = G (\RR ) / K_\RR$. We denote by $D$ the connected component of $\widehat{D}$ associated to $G(\RR)_+$. We have $D \cong \SO_0 (2,b) / \SO (2) \times \SO (b)$, it is a Hermitian symmetric domain of dimension $b$.

Let $\widetilde{G}$  be the general spin group $\Gspin(V)$ associated to $V$. For any compact open subgroup $K\subseteq G(\AA_f)$, we set $\widetilde{K}$ to be its preimage in $\widetilde{G}(\AA_f)$ and denote by $X_K$ the double coset
$$\widetilde{G}(\QQ)\backslash \widehat{D} \times \widetilde{G} (\AA_f) /\widetilde{K}.$$
Let $\widetilde{G}(\QQ)_+\subseteq \widetilde{G}(\QQ)$ be the subgroup consisting of elements with totally positive spinor norm, which can be viewed as the subgroup of  $\widetilde{G}(\QQ)$ lying in the identity component of the adjoint group of $\widetilde{G}(\RR )$. Write
$$\widetilde{G}(\AA_f)=\coprod_{j} \widetilde{G}(\QQ)_+ g_j \widetilde{K}.$$ 
The decomposition of $X_K$ into connected components is 
$$X_K=\coprod\limits_{g_j} \Gamma_{g_j}\backslash D,$$
where  $\Gamma_{g_j}$ is the image of $\widetilde{G}(\QQ)_+\cap g_j \tilde{K} g_j^{-1}$ in $\SO_0(2,b)$. When $g_j=1$, we denote by $\Gamma_K$ the arithmetic group $\Gamma_1=K\cap G(\QQ)$ and $Y_K=\Gamma_K\backslash D$ the connected component of $X_K$.

\subsection{Connected cycles with  coefficients}  
Let $U$ be a $\QQ$-subspace of $V$ with $\dim U = r$ and such that the quadratic form is totally negative definite when restricted to $U$. Let 
$\widehat{D}_{U} \subset \widehat{D}$ be the subset consisting of the 2-planes that are perpendicular to $U$ and let $D_U = \widehat{D}_U \cap D$. 

For a fixed compact open subgroup $K\subseteq G(\AA_f)$ and an element $g \in \widetilde{G}(\AA_f)$ we let $\Gamma_{g}$ be the image of $\widetilde{G}(\QQ)_+\cap g \tilde{K} g^{-1}$ in $\SO_0(2,b)$. Denote by $\Gamma_{g, U}$ the image in $\SO_0 (2,b)$ of the pointwise stabilizer of $U$ in $\widetilde{G}(\QQ)_+\cap g \tilde{K} g^{-1}$.
We then have a map 
\begin{equation}\label{cc}
\Gamma_{g, U}\backslash D_U \longrightarrow \Gamma_{g} \backslash D.
\end{equation}

Following Kudla \cite{Ku97} we will denote this connected cycle (with trivial coefficients) by $c(U , g , K)$; it is of codimension $r$. We let $\mathrm{SC}^r (Y_K ) \subseteq \CH^r(Y_K)$ be the subgroup spanned by connected cycles  $c(U, 1. K)$  of codimension $r$ and set 
$$\mathrm{SC}^\ast (Y_K )\subseteq \CH^\ast (Y_K)$$ 
to be the subalgebra generated by connected cycles of all codimensions. We list some simple properties of connected cycles.
\begin{itemize}
	\item  Let $K'\subseteq K$ be an open compact subgroup and let $f:Y_{K'}\rightarrow Y_K$ be the covering map. Then 
	\begin{equation}\label{pullbackcc}
	f(c(U,1,K'))=c(U,1,K)~\hbox{and}~f^\ast(\SC^r(Y_K))\subseteq \SC^r(Y_{K'}).
	\end{equation}
		
    \item  For the map $\iota:\Gamma_{1,U}\backslash D_U\rightarrow Y_K $ in \eqref{cc}, we have \begin{equation}\label{pcc}
    \iota_\ast(\SC_k(\Gamma_{1,U}\backslash D_U))\subseteq \SC_{k}(Y_K).
    \end{equation}
    Here we write $\SC_k(X):=\SC^{\dim X-k}(X)$.

\end{itemize}

% Finally note that any element $\gamma \in \widetilde{G}(\QQ)_+$ acts on $D$ and induces a map $$\Gamma_g \backslash D \to \gamma \Gamma_g \gamma^{-1} \backslash D = \Gamma_{\gamma g} \backslash D.$$Denoting by $\gamma \cdot c(U,g,K)$ the image of the connected cycle $c(U,g,K)$ under this map, we have: $$\gamma \cdot c(U, g , K ) = c(\gamma U , \gamma g , K ).$$

\subsection{~} Now we give some description of the ring $\SC^\ast(Y_K)$. Firstly, besides the connected cycles, there is  Hodge line bundle on $Y_K$ and  we use $\lambda$ to denote its first Chern class in $\CH^1(Y_K)$. It is preserved under the pullback via \eqref{cc} and there is a very useful result
 \begin{lemma}\label{hodge}\cite[Corollary 8.7]{BLMM16} The class 
 	$\lambda\in \SC^1(Y_K).$
 \end{lemma}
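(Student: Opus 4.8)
The plan is to exhibit $\lambda$ as the constant Fourier coefficient of a modular generating series whose higher coefficients are precisely the codimension-one connected cycles, and then to invoke the positivity of the weight to force that constant coefficient into their span.

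First I would recall the modularity of the generating series of special divisors on $Y_K$. Following Borcherds and Kudla--Millson, with the vector-valued refinement due to Bruinier, write $Z(m) \in \SC^1(Y_K)$ for $m > 0$ for the weighted sum of connected cycles $c(U,1,K)$ attached to negative definite lines $U$ of discriminant $m$; by construction these span $\SC^1(Y_K)$. The essential input is that the formal generating series
\[
\Phi(\tau) = -\lambda + \sum_{m>0} Z(m)\, q^m, \qquad q = e^{2\pi i \tau},
\]
is, after extension of scalars, a holomorphic (vector-valued) modular form of weight $k = b/2 + 1$ with values in $\CH^1(Y_K) = \Pic_\QQ(Y_K)$. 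Here the identification of the constant term with $-\lambda$ is the geometric heart of the matter: it records the contribution of the Hodge bundle to the regularized Kudla--Millson theta integral, equivalently the fact that $Z(0) = -\lambda$ in Borcherds' normalization.

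Next I would pass to a quotient to isolate $\lambda$. Let $\mathrm{pr}\colon \Pic_\QQ(Y_K) \to W := \Pic_\QQ(Y_K)/\SC^1(Y_K)$ be the projection. Since $Z(m) \in \SC^1(Y_K)$ for every $m>0$, all the positive Fourier coefficients of $\mathrm{pr}\circ \Phi$ vanish, so $\mathrm{pr}\circ\Phi$ is the $q$-constant series with value $-\mathrm{pr}(\lambda)$, while remaining a holomorphic modular form of weight $k$ valued in $W$. Composing with any linear functional on $W$ gives a scalar constant that transforms as a modular form of weight $k$; under $\tau \mapsto -1/\tau$ such a constant is scaled by $\tau^{k}$, which is possible only if it vanishes when $k \neq 0$. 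Since $b = \dim D \geq 3$ in all cases of interest we have $k = b/2+1 > 0$, whence $\mathrm{pr}(\lambda) = 0$, that is $\lambda \in \SC^1(Y_K)$, as claimed.

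The main obstacle is the modularity statement itself, together with the precise identification of the constant term as $-\lambda$; this is exactly the Borcherds--Kudla--Millson machinery and is where the analytic work (the singular theta lift, regularized integrals, and the computation of the constant contribution) is concentrated. Once this is granted the deduction is purely formal. A more hands-on alternative avoids the full series: one constructs a single Borcherds product $\Psi$ on $Y_K$, a meromorphic section of $\lambda^{\otimes c(0)/2}$ whose divisor is a rational combination $\sum_{m>0} c(m) Z(m)$ of Heegner divisors. Choosing the weakly holomorphic input so that its constant coefficient $c(0)$ is nonzero---always possible, since the obstruction to prescribing principal parts is pairing with cusp forms while $c(0)$ is controlled by the Eisenstein series---yields directly the relation $\tfrac{c(0)}{2}\,\lambda = \sum_{m>0} c(m) Z(m)$ in $\Pic_\QQ(Y_K)$, and hence $\lambda \in \SC^1(Y_K)$.
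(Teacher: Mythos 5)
Your argument is correct, but note that the paper itself offers no proof of Lemma \ref{hodge} at all: the statement is imported verbatim from \cite[Corollary 8.7]{BLMM16}, and the Borcherds-type argument you give (modularity of the generating series of special divisors with constant term a multiple of $\lambda$, plus the vanishing of constant vector-valued modular forms of positive weight $1+b/2$) is essentially the standard argument underlying that cited result. The same input even surfaces in the paper's own \S 9, where Kudla's cycles \eqref{specialcycle} carry the factor $\lambda^{r-\rank\beta}$ precisely so that the constant term of the generating series is $\varphi(0)\lambda$, and modularity is quoted from \cite{BWR15} in the proof of Proposition \ref{estimate}. Three small repairs to your write-up. (i) The parenthetical claim that the $Z(m)$ \emph{span} $\SC^1(Y_K)$ is never used and should be deleted: your proof only needs the inclusion $Z(m)\in\SC^1(Y_K)$, which is immediate from \eqref{specialcycle}, whereas the reverse inclusion (every connected cycle is a rational combination of weighted cycles) is essentially the paper's conjecture $\SC^\ast(Y_K)=\widetilde{\SC}^\ast(Y_K)$ from \S 9.1, established there only in special cases such as Proposition \ref{K3example}. (ii) Borcherds' modularity theorem is proved for the discriminant kernel, while the lemma concerns an arbitrary open compact $K$; one should either invoke a general-level version of modularity or pass to a small enough $K'\subseteq K$, prove the statement on $Y_{K'}$, and push forward using \eqref{pullbackcc} together with $f_\ast f^\ast=\deg f$ and $f^\ast\lambda_K=\lambda_{K'}$ --- this costs nothing but needs to be said. (iii) In the Borcherds-product variant, the divisor of $\Psi$ is $\sum_{m>0}c(-m)Z(m)$, i.e.\ it is the principal-part coefficients of the weakly holomorphic input that enter, not $c(m)$; with that indexing fixed, your appeal to the Eisenstein/cusp-form dichotomy to arrange $c(0)\neq 0$ is the standard one.
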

  
This yields the following
\begin{theorem}
$\mathrm{SC}^*(Y_K )=\bigoplus\limits_r \mathrm{SC}^r (Y_K ).$
\end{theorem}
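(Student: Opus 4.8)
The plan is to reduce everything to a single multiplicativity statement: the $\QQ$-span of connected cycles is closed under the intersection product, i.e. $\SC^r(Y_K)\cdot\SC^s(Y_K)\subseteq\SC^{r+s}(Y_K)$. Granting this, the graded subgroup $\bigoplus_r\SC^r(Y_K)$ is a subalgebra of $\CH^\ast(Y_K)$ that contains every connected cycle, hence contains the algebra $\SC^\ast(Y_K)$ that these cycles generate; as the opposite inclusion is immediate, the two rings agree and the asserted grading follows at once.

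To establish multiplicativity I would induct on the dimension $b$ of $Y_K$, the case $b\leq 1$ being trivial. Fix connected cycles $c(U_1,1,K)$ and $c(U_2,1,K)$, of codimensions $r=\dim U_1$ and $s=\dim U_2$; these are irreducible subvarieties, so the intersection of their supports is closed with finitely many components. Lifting to $D$ and organising the computation over the double cosets $\gamma\in\Gamma_{1,U_1}\backslash\Gamma_K/\Gamma_{1,U_2}$, the supports meet along the images of the subdomains $D_{U_1}\cap\gamma D_{U_2}=D_{U_1+\gamma U_2}$, where $U_1+\gamma U_2$ is totally negative definite precisely when the intersection is non-empty. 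Each component is thus the support of a connected cycle $c(U_1+\gamma U_2,1,K)$, and whenever the intersection is proper, i.e. $\dim(U_1+\gamma U_2)=r+s$, it contributes a genuine element of $\SC^{r+s}(Y_K)$.

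The real content is the excess intersection that occurs whenever $U_1\cap\gamma U_2\neq0$, in particular already for self-intersections $c(U,1,K)^2$. Here the refined intersection formula expresses the contribution of a component $Z=c(U',1,K)$, with $U'=U_1+\gamma U_2$, as $(\iota_{U'})_\ast\!\big(c_{\mathrm{top}}(E)\big)$ for an excess bundle $E$ of rank $j=(r+s)-\dim U'$, where $\iota_{U'}$ is the map \eqref{cc}. The key geometric input is that the normal bundle of a special subdomain $D_U\hookrightarrow D$ is canonically $\cL^\vee\otimes U_\CC$, with $\cL$ the tautological (Hodge) line bundle; since $U_\CC$ is a trivial factor, all Chern classes of such normal bundles---and hence of the subquotient $E$---are powers of $\lambda$. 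Consequently the excess term equals a rational multiple of $(\iota_{U'})_\ast\big(\lambda_{D_{U'}}^{\,j}\big)$, where $\lambda_{D_{U'}}=\iota_{U'}^\ast\lambda$ is the Hodge class of the sub-Shimura variety $\Gamma_{1,U'}\backslash D_{U'}$.

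Finally I would check that $\lambda_{D_{U'}}^{\,j}\in\SC^{j}(\Gamma_{1,U'}\backslash D_{U'})$. Because $\dim U'\geq1$, this sub-Shimura variety has dimension strictly less than $b$, so the inductive hypothesis applies to it; together with Lemma \ref{hodge}, which gives $\lambda_{D_{U'}}\in\SC^1$, the multiplicativity on the smaller variety yields $\lambda_{D_{U'}}^{\,j}\in\SC^{j}$. Pushing forward via \eqref{pcc} then places $(\iota_{U'})_\ast(\lambda_{D_{U'}}^{\,j})$ in $\SC^{\dim U'+j}(Y_K)=\SC^{r+s}(Y_K)$, which closes the induction. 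I expect the main obstacle to be the bookkeeping of this excess intersection theory---verifying that the relevant normal and excess bundles are built out of the single tautological bundle so that every correction term is a power of $\lambda$, and checking the finiteness of the double-coset sum, which follows from properness of the algebraic intersection.
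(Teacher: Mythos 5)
Your proposal is correct in substance, but it follows a genuinely different route from the paper's proof. Both arguments reduce the theorem to the multiplicativity statement $\SC^{r_1}(Y_K)\cdot\SC^{r_2}(Y_K)\subseteq\SC^{r_1+r_2}(Y_K)$, and both dispose of the improper (excess) part of an intersection by producing a power of the Hodge class on a sub-Shimura variety and then invoking Lemma \ref{hodge} together with the pushforward property \eqref{pcc}. The difference is in how one arrives at that excess term. The paper first treats the case $r_1=1$: for an irreducible connected cycle $\beta$, either the intersection with a divisorial connected cycle $\alpha$ is proper (giving connected cycles directly), or $\mathrm{supp}(\beta)\subseteq\mathrm{supp}(\alpha)$, in which case O'Grady's lemma gives $\alpha\cdot\beta\in\left<-\lambda\cdot\beta\right>$; the general case is then reduced to this one by passing to a finer level $K'\subseteq K$ at which $c(U_1,1,K')$ factors as a product of $r_1$ divisorial connected cycles, and descending via \eqref{pullbackcc} and the projection formula. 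You instead stay at the fixed level $K$, decompose the intersection over double cosets $\Gamma_{1,U_1}\backslash\Gamma_K/\Gamma_{1,U_2}$, and treat all excess components uniformly by Fulton's excess intersection formula, the key geometric input being $N_{D_U}D\cong\cL^\vee\otimes U_\CC$ --- whose rank-one case is precisely the O'Grady lemma the paper cites --- together with an induction on $\dim Y_K$ to supply $\lambda^j\in\SC^j$ on the smaller sub-Shimura varieties. What each approach buys: the paper needs only divisor-level intersection theory, but leans on the (only sketched) level-raising factorization and the attendant covering bookkeeping; your argument is uniform in codimension and never changes the level, at the price of invoking the refined intersection formalism on orbifolds and of verifying that the excess bundle is a constant vector space tensored with $\cL^\vee$ (this is true: the inclusion $N_{D_{U'}}D_{U_2}\hookrightarrow N_{D_{U_1}}D|_{D_{U'}}$ is $\cL^\vee$ tensored with a constant linear map of coefficient spaces, so the cokernel has top Chern class a multiple of $\lambda^j$). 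Your route is also the one closest in spirit to Kudla's definition \eqref{specialcycle}, where the factor $\lambda^{r-\rank\beta}$ records exactly these excess contributions.
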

\begin{proof}
	We need to show that the  image of the intersection produt 
	$$\SC^{r_1}(Y_K)\times \SC^{r_2}(Y_K)\xrightarrow{\cap} \SC^\ast(Y_K)$$ lies in $\SC^{r_1+r_2}(Y_K)$.
Let us first consider the case  when $r_1=1$.   For any two connected cycles $\alpha=c(U_1,1, K)\in \SC^{r_1}(Y_K)$ and $\beta=c(U_2,1, K)\in \SC^{r_2}(Y_K)$,   there are two possibilities: 
	 
	\begin{enumerate}
		\item   If $\alpha$ and $\beta$ intersect properly, then $\alpha\cdot \beta$ is a linear combination of  connected cycles $c(U,1,K)$ where  $U$ is spanned by $U_2$ and an element in the $\Gamma_K$-orbit of $U_1$.  
		
	\item If $\beta$ is contained in $\alpha$, then 	
		$$\alpha\cdot\beta\in \left<-\lambda\cdot \beta\right>$$
		by O'Grady (cf.~\cite[Lemma 1.2]{Grad86}). By Lemma \ref{hodge} and \eqref{pcc}, we know that $$-\lambda\cdot \beta\in \SC^{1+r_2}(Y_K).$$ It follows that $\alpha\cdot\beta\in \SC^{1+r_2}(Y_K)$.
	\end{enumerate}  
This further gives 
\begin{equation}\label{icc}
\alpha_1\cdot\alpha_2\cdots  \alpha_r \cdot \gamma\in \SC^{r+r_2}(Y_K)
\end{equation}	
	for any $\alpha_1,\ldots,\alpha_r\in \SC^1(Y_K)$ and $\gamma\in \SC^{r_2}(Y_K)$. 

When $r_1>1$,  given a connected cycle $c(U_1,1,K)$ of codimension $r_1$ on $Y_K$,  an easy inductive  argument shows that there exists an open compact subgroup $K'\subseteq K$ such that the connected cycle $$c(U_1,1,K')\subseteq Y_{K'}$$ can be written as an intersection of $r_1$ connected cycles of codimension one, i.e. 
\begin{equation}
c(U_1,1,K')=c(W_1,1,K')\cdot c(W_2,1,K')\cdots c(W_{r_1},1,K'),
\end{equation} 
for some subspace $W_j$ with $\dim W_j=1$. Consider the covering map $f:Y_{K'}\rightarrow Y_K$, we have 
\begin{equation}
c(U_1,1,K')\cdot f^\ast(\beta)\in \SC^{r_1+r_2}(Y_{K'}), \quad \forall \beta\in \SC^{r_2}(Y_K)
\end{equation}
by \eqref{pullbackcc} and \eqref{icc}.  As $f(c(U_1,1,K'))=c(U_1,1,K)$, the projection formula gives $c(U_1,1,K)\cdot \beta\in \SC^{r_1+r_2}(Y_k)$. 

\end{proof}

\begin{remark}
 Similar questions have been asked by Kudla in \cite{Ku97}, where  one  consider the  ring generated by  special cycles instead of connected cycles. We believe our argument might still work. 
\end{remark}

To conclude this paragraph we relate connected cycles to Noether-Lefschetz cycles: we have a period map $\cP_h:\cF_h\rightarrow  \Gamma_h\backslash D$. The following proposition is then straightforward. 

\begin{proposition} \label{P423}
	The irreducible Noether-Lefschetz cycles are the restriction of connected cycles on $\Gamma_h \backslash D$ to  $\cF_h$. In particular, 
	\begin{equation}\label{NLSC}
	\NL^\ast(\cF_h)=\cP^\ast_h ( \mathrm{SC}^\ast(\Gamma_h \backslash D)).
	\end{equation}
\end{proposition}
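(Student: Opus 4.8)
The plan is to build an explicit dictionary between the Lorentzian lattices $\Sigma \supseteq \langle h\rangle$ that index the irreducible Noether--Lefschetz loci and the negative definite $\QQ$-subspaces $U \subseteq V := \Lambda_h\otimes\QQ$ that index the connected cycles, and then to check that under the period map the two kinds of cycle literally coincide. Recall that $V$ has signature $(2, m_\Lambda-3)$ and that $D$ is precisely the Hermitian symmetric domain attached to $G=\SO(V)$; thus $\Gamma_h\backslash D$ is (a connected component of) a Shimura variety of orthogonal type and $\SC^\ast(\Gamma_h\backslash D)$ is defined.

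First I would set up the correspondence. Given a Lorentzian lattice $\Sigma$ of signature $(1,r)$ with $\langle h\rangle\subseteq\Sigma\hookrightarrow\Lambda$, put $U_\Sigma := (\Sigma\cap h^\perp)\otimes\QQ \subseteq V$. Since $h^2>0$ spans the positive part of $\Sigma$, the subspace $U_\Sigma$ is negative definite of dimension $r$, hence admissible as the datum of a connected cycle of codimension $r$. A direct computation in $\Lambda$ gives $\Sigma^\perp\otimes\QQ = U_\Sigma^\perp$ inside $V$, so that the restricted period domain $D_\Sigma$, being cut out by orthogonality to $\Sigma\otimes\CC$, coincides with $D_{U_\Sigma}$, the locus of periods perpendicular to $U_\Sigma$. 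Conversely, any negative definite $\QQ$-subspace $U\subseteq V$ produces a Lorentzian lattice by saturating $\langle h\rangle\oplus (U\cap\Lambda_h)$ in $\Lambda$; this inverts the assignment, so the two indexing sets match.

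Next I would identify the cycles themselves. By Hodge theory the condition $\Sigma\subseteq\mathrm{NS}(X)$ is equivalent to $\varphi(\omega_X)\perp U_\Sigma$, i.e. to the period point lying in $D_{U_\Sigma}$; combined with the commutative diagram relating $\cP_\Sigma$ and $\cP_h$, this shows that the image of the proper map $\cF_{\Sigma,h}\to\cF_h$ is carried by $\cP_h$ onto the image of $\Gamma_{1,U_\Sigma}\backslash D_{U_\Sigma}$ in $\Gamma_h\backslash D$, that is, onto the connected cycle $c(U_\Sigma,1,K)$. Since $\cP_h$ is an open immersion, pulling back $c(U_\Sigma,1,K)$ simply restricts it to $F_h$ and recovers exactly the irreducible Noether--Lefschetz cycle. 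This gives the first assertion; the algebra identity \eqref{NLSC} then follows because $\cP_h^\ast$ is a ring homomorphism carrying a generating set of $\SC^\ast(\Gamma_h\backslash D)$ (the connected cycles) bijectively onto a generating set of $\NL^\ast(\cF_h)$ (the irreducible Noether--Lefschetz cycles).

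The hard part will be the comparison of the two arithmetic groups on the sources of the cycle maps: the Noether--Lefschetz locus is uniformized by $\Gamma_\Sigma$, the \emph{setwise} stabilizer of $\phi(\Sigma)$ in $\Mon^2(X)$, whereas the connected cycle is uniformized by $\Gamma_{1,U_\Sigma}$, the \emph{pointwise} stabilizer of $U_\Sigma$. These differ at most by a finite group that acts trivially on $D_{U_\Sigma}$, so the two uniformizations realize the same reduced subvariety of $\Gamma_h\backslash D$, and with the rational coefficients used throughout any residual finite multiplicity is harmless. Once this is settled, the remaining checks — the rationality and saturation used in passing between $U$ and $\Sigma$, and the fact that every irreducible component of every $\cN^r(\cF_h)$ does arise from some such $\Sigma$ — are routine.
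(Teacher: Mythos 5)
Your proposal is correct, and it is precisely the argument the paper has in mind: the paper offers no proof beyond calling the proposition ``straightforward'' after introducing the period map, and the intended content is exactly your dictionary $\Sigma \leftrightarrow U_\Sigma$ between saturated Lorentzian sublattices containing $h$ and negative definite rational subspaces of $\Lambda_h\otimes\QQ$, the identification $D_\Sigma = D_{U_\Sigma}$, the Hodge-theoretic characterization of the Noether--Lefschetz condition, and the observation that the open immersion $\cP_h$ makes pullback into restriction, carrying generators to generators. One small correction to your last paragraph: the finite quotient of the setwise stabilizer $\Gamma_\Sigma$ by the pointwise stabilizer $\Gamma_{1,U_\Sigma}$ need \emph{not} act trivially on $D_{U_\Sigma}$ (it can act nontrivially on $\Sigma^\perp$ as well as on $U_\Sigma$); the two uniformizations nevertheless have the same image simply because both groups are subgroups of $\Gamma_h$, so each image equals the image of $D_{U_\Sigma}$ under $D\to\Gamma_h\backslash D$, and with $\QQ$-coefficients the resulting finite multiplicities are immaterial for the generated subrings.
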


More generally,  one can easily see that \eqref{NLSC} holds for the Noether-Lefschetz ring on the moduli space of lattice polarized hyperk\"ahler manifolds with level structures. 

\subsection{~}Now following Funke and Millson \cite{FM06} we promote the connected cycles $c(U,g,K)$ to cycles with non-trivial coefficients. First recall that given any partition $\lambda = (\lambda_1 , \lambda_2 , \ldots , \lambda_k)$, with $k = \left[ \frac{b+2}{2} \right]$, the harmonic Schur functor associates to the quadratic space $V$ a finite dimensional $\mathrm{O} (V)$-module $\SS_{[\lambda ]} (V)$ which is irreducible of highest weight $\lambda$. As such it defines a fiber bundle 
\begin{equation} \label{bundle}
\Gamma_g \backslash ( D \times \SS_{[\lambda ]} (V) ) \to \Gamma_g \backslash D.
\end{equation}
We will use $\mathbf{E}$ to denote the associated local system.

Now let $\bx = (x_1 , \ldots , x_r)$ be a rational basis of $U$. The vectors $x_1 , \ldots , x_n$ are all fixed by $\Gamma_{g,U}$. Hence any tensor word in the $x_j$'s is also fixed by $\Gamma_{g,U}$. Given a tableau $T$ on $\lambda$ we denote by $\bx_T $ the corresponding harmonic tensor in $\SS_{[\lambda] } (V)$, see \cite{Fulton}.\footnote{Beware that a tableau is called a semi standard filling in \cite{FM06}.} It is fixed by $\Gamma_{g,U}$ and therefore gives rise to a parallel section of the restriction of $\mathbf{E}$ over the connected cycle $c(U,g,K)$. We define connected cycles with coefficients in $\mathbf{E}$ by setting
$$c(\bx , g , K)_T = c(U , g , K) \otimes \bx_T \mbox{ where } U = \mathrm{span} (x_1 , \ldots , x_n).$$

The scalar product on $V$ induces an inner product on $\SS_{[\lambda] } (V)$. Composing the wedge product with this inner product and integrating the result over $Y_K$ we define a pairing 
\begin{equation} \label{PD}
\langle \ , \ \rangle : H^{2r} (Y_K , \mathbf{E}) \times H_c^{2(b-r)} (Y_K , \mathbf{E}) \to \CC,
\end{equation}
where $H_c^* (- )$ denotes the de Rham cohomology with compact support. It follows from Poincar\'e duality that the pairing \eqref{PD} is perfect. Now if $\eta$ is a compactly supported $\SS_{[\lambda] } (V)$-valued closed $2(b-r)$-form on $Y_K$ we can form the period
$$\int_{c(\bx , g , K)_T} \eta = \int_{c(U , g , K)} (\eta , \bx_T ) .$$
It thus corresponds to the connected cycle with coefficient $c(\bx , g , K)_T$ a linear form on $H_c^{2(b-r)} (Y_K , \mathbf{E})$. Since the pairing \eqref{PD} is perfect this linear form in turn defines a class $[c(\bx , g , K)_T]$ in $H^{2r} (Y_K , \mathbf{E})$. 
We shall refer to the corresponding map as the cycle class map (with coefficients). We denote by $\mathrm{SC}^\ast_{\rm hom}(Y_K,\mathbf{E})$ the subring of $H^\ast (Y_K , \mathbf{E})$ generated by the cycle classes $[c(\bx , g , K)_T]$ (as $\bx$ and $T$ vary).

\begin{remark}
When $\lambda = 0$ the representation $\SS_{[\lambda] } (V)$ is trivial and the cycle class map is obtained by composing the inclusion map $\mathrm{SC}^r (Y_K )\subseteq \CH^r (Y_K)$ with the usual cycle class map $\CH^r (Y_K ) \to H^{2r} (Y_K )$. 
\end{remark}

If $K ' \subseteq K$ is a compact open normal subgroup, the finite covering group $\Gamma_K /\Gamma_{K'}$ that acts on $H^{2r} (Y_{K'} , \mathbf{E})$ preserves the image $\mathrm{SC}^r_{\rm hom}(Y_{K'} , \mathbf{E})$ of the cycle class map and, since the covering projection map $Y_{K'} \to Y_K$ maps connected cycles to connected cycles, we have:
\begin{equation} \label{E45}
\mathrm{SC}^r_{\rm hom}(Y_{K'} , \mathbf{E})^{\Gamma_K /\Gamma_{K'}} = \mathrm{SC}^r_{\rm hom}(Y_{K} , \mathbf{E}).
\end{equation}

\subsection{Zucker's conjecture and Hodge theory}As a key ingredient, 
we shall explain the connection between the ordinary cohomology and $L^2$-cohomology of $Y_K$. As we will see later, the second one is well understood as relative Lie algebra cohomology which naturally links to representation theory.

We assume that $Y_K$ is smooth and fix a local system $\mathbf{E}$ as in the preceding paragraph. 
Let $H^i_{(2)}(Y_K,\bE)$ be the $i$-th $L^2$-cohomology  of $Y_K$ with coefficients in $\bE$. By Hodge theory, the group $H^i_{(2)}(Y_K,\bE)$ is isomorphic to the space of $L^2$-harmonic $i$-forms,  which is a finite dimensional vector space with a natural Hodge structure (see~\cite{BG83}). Let $\overline{Y}_K^{bb}$ be the Baily-Borel-Satake compactification  of $Y_K$. Looijenga \cite{Lo88}, Saper and Stern \cite{SS90} have shown that 
\begin{theorem}[Zucker's conjecture]
	 There is an isomorphism  
	 \begin{equation}\label{Zucker}
	 H^k_{(2)}(Y_K,\bE)\cong IH^k(\overline{Y}_K^{bb},\bE),
	 \end{equation}
	 where $IH^\bullet(\overline{Y}_K^{bb},\bE)$ is the intersection cohomology with middle perversity on $\overline{Y}_K^{bb}$.
\end{theorem}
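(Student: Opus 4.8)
The plan is to realize both sides of \eqref{Zucker} as hypercohomology of complexes of sheaves on $\overline{Y}_K^{bb}$ and then to identify those two complexes. By definition $IH^\bullet(\overline{Y}_K^{bb},\bE)$ is the hypercohomology of the intersection complex $\mathrm{IC}_\bE$ for middle perversity, so it suffices to produce a sheaf complex computing $L^2$-cohomology and to match it with $\mathrm{IC}_\bE$. For this I would introduce the $L^2$-complex $\cL_{(2)}$ on $\overline{Y}_K^{bb}$, namely the sheafification of the presheaf sending an open set $W$ to the complex of $\bE$-valued forms on $W\cap Y_K$ that are square-integrable together with their exterior derivative, with respect to the invariant metric on $D$ and the admissible ($K_\RR$-invariant) metric on the bundle attached to $\SS_{[\lambda]}(V)$. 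Since this metric is complete, $\cL_{(2)}$ is fine and its global sections recover the (reduced) $L^2$-cohomology; the strong Hodge theory of \cite{BG83} guarantees that reduced and ordinary $L^2$-cohomology coincide, so $\HH^\bullet(\overline{Y}_K^{bb},\cL_{(2)})\cong H^\bullet_{(2)}(Y_K,\bE)$.

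Next I would invoke the Goresky--MacPherson--Deligne axiomatic characterization of $\mathrm{IC}_\bE$: a cohomologically constructible complex on $\overline{Y}_K^{bb}$ is quasi-isomorphic to $\mathrm{IC}_\bE$ as soon as it restricts to $\bE$ on the open stratum $Y_K$ and satisfies the support and cosupport (dimension) conditions for middle perversity, namely $\dim\operatorname{supp}\cH^m < \operatorname{codim}$-bounds on the boundary strata and the dual attachment bounds. The normalization on $Y_K$ is immediate from the definition of $\cL_{(2)}$, so the entire problem reduces to verifying constructibility and the two dimension conditions. These are local statements along the boundary strata of $\overline{Y}_K^{bb}$, which for the orthogonal Shimura variety $Y_K=\Gamma_K\backslash D$ are indexed by the rational parabolic subgroups of $G=\SO(V)$, i.e.\ by the rational isotropic subspaces of $V$.

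The heart of the argument is therefore a local $L^2$-Poincar\'e lemma near the boundary. For a point on the stratum attached to a rational parabolic $P$ with unipotent radical $N$ and Lie algebra $\frn$, reduction theory models a deleted neighborhood as a fibration over the boundary symmetric space whose fibers are nilmanifolds $\Gamma_N\backslash N$, glued along horocyclic coordinates on which the invariant metric degenerates in a precisely controlled way. I would first compute the fiberwise $L^2$-cohomology of the nilmanifolds, which by Kostant's theorem is governed by the nilpotent Lie algebra cohomology $H^\bullet(\frn,\bE)$ decomposed into Levi-isotypic pieces; the $L^2$-condition in the horocyclic direction then retains exactly those Kostant summands whose central weight satisfies a strict positivity, producing precisely the middle truncation. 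Matching the surviving summands against the support/cosupport bounds is Zucker's local vanishing statement, and this is the step I expect to be the main obstacle: controlling the growth of harmonic $\bE$-valued forms near the cusps and proving that the weighting forced by the degenerating metric reproduces exactly the middle-perversity cutoff (equivalently, that the admissible weighted cohomology in the middle weight agrees with the $L^2$-complex). Once this local computation is secured, the axioms hold, whence $\cL_{(2)}\simeq\mathrm{IC}_\bE$ and \eqref{Zucker} follows. This is the content of the two independent proofs of Looijenga \cite{Lo88} and Saper--Stern \cite{SS90}: the latter carries out the direct analytic verification of the axioms essentially along these lines, while the former proceeds through an explicit description of the $L^2$-complex near the boundary.
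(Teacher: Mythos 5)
You should know at the outset that the paper does not prove this statement at all: it is imported wholesale from the literature, with the proof credited to Looijenga \cite{Lo88} and Saper--Stern \cite{SS90}. So the comparison here is between your sketch and those cited proofs, not against any argument in the paper itself. Measured against that standard, your outline has the correct architecture, and it is essentially the one Saper and Stern execute: sheafify the $L^2$-complex on $\overline{Y}_K^{bb}$, check that its hypercohomology computes $H^\bullet_{(2)}(Y_K,\bE)$, and verify the Goresky--MacPherson--Deligne axioms characterizing $\mathrm{IC}_\bE$, with the entire weight falling on a local $L^2$-Poincar\'e lemma at the boundary strata (indexed, as you say, by rational isotropic subspaces of $V$), where Kostant's theorem on $H^\bullet(\frn,\bE)$ and the weight cutoff forced by the degenerating metric must be shown to reproduce exactly the middle-perversity truncation. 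You correctly flag that local vanishing estimate as the main obstacle and do not prove it; since that step \emph{is} the theorem --- it occupies the bulk of both cited papers --- your proposal is a faithful roadmap rather than a proof, which is in effect the same status the statement has inside this paper.

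Two smaller corrections. First, fineness of the $L^2$-sheaf complex does not follow from completeness of the metric; it requires partitions of unity subordinate to covers of $\overline{Y}_K^{bb}$ whose differentials are bounded for the locally symmetric metric (Zucker's observation), whereas completeness is what enters the identification of the $L^2$-complex's cohomology with harmonic forms, i.e.\ reduced $L^2$-cohomology; closing the reduced-versus-unreduced gap further needs a closed-range (or finite-dimensionality) statement, so citing \cite{BG83} is in the right spirit but does not by itself settle it. Second, your characterization of Looijenga's proof is inaccurate: he does not give a direct description of the $L^2$-complex near the boundary, but instead passes through toroidal compactifications and the decomposition theorem, identifying the $L^2$-complex on $\overline{Y}_K^{bb}$ with a direct summand of the pushforward from a toroidal resolution; the direct analytic verification of the axioms along the lines you describe is Saper--Stern's route alone.
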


For our purpose, we shall also consider the Hodge, or mixed Hodge, structures on these cohomology groups. Here, as the local system $\bE$ underlies a natural variation of Hodge structure over $Y_K$ (cf.~\cite{Zu81}), $IH^k(\overline{Y}_K^{bb},\bE)$ carries a  mixed Hodge structure by Saito's \cite{Sa90} mixed Hodge module theory.  A priori the two Hodge structures need not correspond under the isomorphism but Harris and Zucker \cite[Theorem 5.4]{HZ01} nevertheless prove:
\begin{theorem} \label{TL2map}
	The composition
\begin{equation}\label{L2map}
\xi_k: H_{(2)}^k(Y_K,\bE)\cong IH^k(Y_K,\bE)\rightarrow H^k(Y_K,\bE)
\end{equation}
is a morphism of mixed Hodge structures. 
\end{theorem}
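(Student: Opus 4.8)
The plan is to reduce the statement to a compatibility between Saito's algebraically defined Hodge structure on intersection cohomology and the analytic Hodge structure on $L^2$-cohomology, and then to settle that compatibility by a boundary analysis. First I would upgrade the geometric input to the language of mixed Hodge modules. Since $\bE$ underlies a polarizable variation of Hodge structure on the smooth quasi-projective variety $Y_K$, Saito's theory attaches to it a polarizable pure Hodge module $\cM$ on $Y_K$ whose underlying perverse sheaf is $\bE[\dim Y_K]$. Its intermediate extension $j_{!*}\cM$ across $\overline{Y}_K^{bb}$ is again a pure Hodge module, and since $\overline{Y}_K^{bb}$ is projective its hypercohomology carries a pure (hence mixed) Hodge structure which is, by construction, $IH^k(\overline{Y}_K^{bb},\bE)$ up to a shift and Tate twist.

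Second, I would produce the map $\xi_k$ inside this formalism. The canonical adjunction morphism of Hodge modules $j_{!*}\cM \to Rj_*\cM$ induces, after taking hypercohomology, exactly the natural map $IH^k(\overline{Y}_K^{bb},\bE)\to H^k(Y_K,\bE)$, where the target carries Saito's mixed Hodge structure, which on a smooth variety agrees with Deligne's. Because $j_{!*}\cM \to Rj_*\cM$ is a morphism in the derived category of mixed Hodge modules, the induced map on hypercohomology is automatically a morphism of mixed Hodge structures. Thus, \emph{granting Saito's Hodge structure on the source}, the map $IH^k\to H^k(Y_K,\bE)$ is a morphism of mixed Hodge structures for free.

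The whole theorem therefore reduces to the assertion that the Zucker isomorphism $H^k_{(2)}(Y_K,\bE)\cong IH^k(\overline{Y}_K^{bb},\bE)$ is an isomorphism of Hodge structures, i.e.\ that the analytic Hodge structure produced by $L^2$-harmonic theory coincides with Saito's. Both structures share the same rational lattice, transported from $IH^k$ through Zucker's topological identification, so the content is purely the coincidence of the two Hodge filtrations on $\CC$-coefficients. Over the open part $Y_K$ this is immediate, since away from the boundary both filtrations are the one carried by the underlying variation and hence agree. I would therefore localize the comparison at the strata of the Baily-Borel boundary and argue by induction on their rank, using the stratification spectral sequence, whose degeneration is guaranteed by purity and the decomposition theorem.

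The crux --- and the step I expect to be hardest --- is the boundary comparison. Near a rational boundary component the underlying variation degenerates, and one must show that the $L^2$-harmonic Hodge bigrading is computed by the same nilpotent-orbit data that governs Saito's filtration on the stalks of $j_{!*}\cM$. This is exactly where the asymptotic Hodge theory of Schmid and of Cattani-Kaplan-Schmid enters: their analysis of the $L^2$-cohomology of a degenerating variation over a punctured polydisk both reproves the local form of Zucker's conjecture and identifies the limiting mixed Hodge structure, and it is this identification that forces the $L^2$-filtration to match Saito's. Assembling these local matchings across all boundary strata, and checking compatibility with the boundary maps via Kostant's description of the Lie algebra cohomology of the unipotent radicals, yields the desired coincidence of Hodge structures and hence the theorem.
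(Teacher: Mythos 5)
The paper does not actually prove this statement: it is quoted directly from Harris--Zucker \cite[Theorem 5.4]{HZ01}, and the surrounding text explicitly warns that ``a priori the two Hodge structures need not correspond under the isomorphism.'' So the question is whether your sketch could stand in for that citation. Your first two steps are sound and standard: Saito's theory puts a pure Hodge structure on $IH^k(\overline{Y}_K^{bb},\bE)$, and since the natural map $j_{!*}\cM \to Rj_*\cM$ lives in the category of mixed Hodge modules, the induced map $IH^k(\overline{Y}_K^{bb},\bE)\rightarrow H^k(Y_K,\bE)$ is automatically a morphism of mixed Hodge structures.

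The gap is in your third step, which is where the entire content of the theorem lives. You reduce the theorem to the claim that Zucker's isomorphism identifies the $L^2$-harmonic Hodge structure with Saito's Hodge structure on $IH^k$ --- a statement strictly stronger than what is needed --- and then propose to prove it by localizing at the boundary and invoking Schmid and Cattani--Kaplan--Schmid. But the CKS results on $L^2$-cohomology of degenerating variations apply to degenerations over punctured polydisks, i.e. along normal crossing divisors in a smooth ambient space. The Baily--Borel boundary is nothing like that: it has high codimension, is highly singular, and its higher-rank strata have links governed by nilpotent cones of rank greater than one; this is exactly why the proofs of Zucker's conjecture (Looijenga, Saper--Stern) are so involved and do not proceed by a naive local-to-global patching. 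Your remark that the comparison is ``immediate over the open part'' also conflates local agreement of filtered complexes with agreement of the induced filtrations on global cohomology: the $L^2$ Hodge filtration is defined by global harmonic theory, and its comparison with Saito's filtration is not a stalkwise matter. The matching across higher-rank boundary strata is precisely the subject of the Harris--Zucker memoir (which in fact proceeds through coherent cohomology of automorphic vector bundles on toroidal compactifications, not through CKS), so your final paragraph compresses a substantial body of work into an assertion; as written, the proposal is a research outline rather than a proof, and the honest fix is to do what the paper does --- cite \cite{HZ01}.
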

The image of $\xi_k$ is the lowest non-zero weight in the mixed Hodge structure of $H^k(Y_K,\bE)$ (see Remark 5.5 of \cite{HZ01}). In particular, the subspace $\mathrm{SC}^r_{\rm hom}(Y_K , \bE )$ lies in the image of $\xi_{2r}$. 

\begin{corollary} \label{HS}
We have
	\begin{equation}\label{L2-Zucker-Shimura}
	H^k_{(2)}(Y_K,\bE) \cong H^{k}(Y_K,\bE)
	\end{equation}
	as a morphism of Hodge structure for all $k<b-1$. 
\end{corollary}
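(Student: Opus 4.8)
The plan is to show that the morphism $\xi_k$ of \eqref{L2map} is an isomorphism in the range $k<b-1$; since \ref{TL2map} already guarantees that $\xi_k$ is a morphism of (mixed) Hodge structures, this immediately yields the isomorphism of Hodge structures asserted in \ref{HS}. First I would record the two structural inputs. On the source side, $H^k_{(2)}(Y_K,\bE)\cong IH^k(\overline{Y}_K^{bb},\bE)$ by Zucker's conjecture \eqref{Zucker}, and the intersection cohomology of the projective variety $\overline{Y}_K^{bb}$ is \emph{pure} of weight $k$ (purity of the intersection complex, via Saito's theory). On the target side, Harris--Zucker (\ref{TL2map}) tell us that $\xi_k$ is a map of mixed Hodge structures whose image is exactly the lowest weight piece $W_kH^k(Y_K,\bE)$. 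Thus the statement splits into two assertions, to be established for $k<b-1$: first, that $\xi_k$ is injective; and second, that $W_kH^k(Y_K,\bE)=H^k(Y_K,\bE)$, i.e.\ that $H^k(Y_K,\bE)$ is pure of weight $k$ (which makes $\xi_k$ surjective).

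Both assertions I would deduce from the geometry of the Baily--Borel boundary. Since $G=\SO(V)$ has signature $(2,b)$, the Witt index of $V$ is at most $2$, so the rational boundary components of $\overline{Y}_K^{bb}$ have complex dimension $\leq 1$; equivalently the boundary $\partial=\overline{Y}_K^{bb}\setminus Y_K$ has complex codimension $\geq b-1$. For injectivity I would examine the natural restriction map $IC_{\overline{Y}_K^{bb}}(\bE)\to Rj_\ast(\bE[b])$, where $j:Y_K\hookrightarrow \overline{Y}_K^{bb}$ is the open immersion and which induces $\xi_k$ on hypercohomology; its cone is supported on $\partial$, and the support and cosupport conditions for the middle-perversity intersection complex, combined with the codimension bound, force this cone to have vanishing hypercohomology in the relevant degrees, so $\xi_k$ is an isomorphism (in particular injective) for $k<b-1$. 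For purity I would pass to a smooth toroidal compactification: the higher weight graded pieces $\mathrm{Gr}^W_{k+p}H^k(Y_K,\bE)$ with $p>0$ are built from cohomology of the boundary strata with coefficients in $\bE$, and the bound on the dimension of the boundary concentrates these contributions in degrees $\geq b-1$; alternatively, purity can be read off the same cone computation used for injectivity.

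I expect the main obstacle to be extracting the \emph{precise} cutoff $k<b-1$, rather than merely "low degrees." A naive codimension count (as for trivial coefficients) would suggest a strictly larger range, so the sharp value $b-1$ genuinely reflects the interaction of the nontrivial local system $\bE$ with the singular boundary strata: one must control the cohomology sheaves of the (co)stalks of $IC_{\overline{Y}_K^{bb}}(\bE)$ along the zero- and one-dimensional boundary components, where the link cohomology with coefficients in the restricted system first contributes near degree $b-1$. Once this boundary contribution is shown to live in degrees $\geq b-1$, both the injectivity and the purity hold for $k<b-1$, and \ref{HS} follows at once from \ref{TL2map}.
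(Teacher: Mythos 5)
Your proof is correct and follows essentially the paper's own argument: the paper's (one-line) proof is precisely that the boundary of $\overline{Y}_K^{bb}$ has dimension at most one, hence complex codimension $\geq b-1$, which forces $IH^k(\overline{Y}_K^{bb},\bE)\cong H^k(Y_K,\bE)$ in degrees $k<b-1$, combined with the Harris--Zucker compatibility (Theorem \ref{TL2map}, and \cite[Remark 5.5]{HZ01}) for the statement about Hodge structures. One small correction to your final paragraph: the cutoff $k<b-1$ is exactly the naive codimension count --- the one-dimensional rational boundary components (modular curves) have codimension $b-1$, while point components would allow $k<b$ --- so it is purely dimensional and does not reflect any subtle interaction of the local system $\bE$ with the boundary strata.
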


\begin{proof}
The  isomorphism \eqref{L2-Zucker-Shimura} holds because the boundary of $\overline{Y}_K^{bb}$ has dimension at most one. See \cite[Remark 5.5]{HZ01} for the second statement.  
\end{proof}

\section{A surjectivity result for theta lifting}  
 In this section, we recap the work in \cite{BLMM16} on the surjectivity of global theta lifting for cohomological automorphic representations with trivial coefficients of orthogonal groups. We also explain how to extend to the case of nontrivial coefficients.  

\subsection{Global theta correspondence}
Here we briefly review Howe's global theta correspondence. Let us first fix some notations. Let $V$ be a non-degenerate quadratic space over $\QQ$ of signature $(2,b)$ and let $W$ be a symplectic space over $\QQ$ of dimension $2r$. We denote by $G'=\Mp_{2r}(W)$ the symplectic group $\Sp(W)$ if $b$ is even and the metaplectic double cover of $\Sp(W)$ if $b$ is odd. 

Fix a nontrivial additive character $\psi$ of $\AA/\QQ$. We denote by $\omega_\psi$ the (automorphic) Weil representation of $\mathrm{O}(V)(\AA) \times G'(\AA)$ realized in the space $\cS(V^{r}(\AA))$ of Schwartz-Bruhat  functions on $V^r(\AA)$ --- the so-called Schr$\ddot{\hbox{o}}$dinger model of the Weil representation. For each $\phi\in \cS(V^{r}(\AA))$ we form the theta function on $\mathrm{O}(V)(\AA) \times G'(\AA)$:
\begin{equation}\label{thetafunction}
\theta_{\psi,\phi}(g,g')=\sum\limits_{\xi \in V(\QQ)^r} \omega_\psi(g,g') (\phi)(\xi).
\end{equation}
Given an irreducible cuspidal automorphic representation $(\tau,H_\tau)$ of $G'(\AA)$ which occurs as an irreducible subspace of $L^2 (G' \QQ) \backslash G' \AA) )$ and given an element $f$ in that subspace, we can form the theta integral 
\begin{equation}\label{theta}
\theta_{\psi,\phi}^{f}(g)=\int\limits_{G'(\QQ)\backslash G'(\AA)}\theta_{\psi,\phi}(g,g') f(g') dg'.
\end{equation}
It is absolutely convergent and defines an automorphic function on $\mathrm{O}(V)(\AA)$, called the {\it global theta lift} of $f$.  
We shall denote by $\theta_{\psi,V} (\tau)$ the space of the automorphic representation generated by all the global theta lifts $\theta_{\psi,\phi}^f$ as $f$ and $\phi$ vary. We shall refer to the corresponding automorphic representation of $\mathrm{O}(V)(\AA)$ as {\it the global $\psi$-theta lift} of $\tau$ to $\mathrm{O}(V)$.  %This  yields the theta lifting for the special orthogonal group $G=\SO(V)$ as below. 

\begin{definition}
We say that a representation $\pi\in \cA(G)$ is in the image of the cuspidal $\psi$-theta correspondence from a smaller symplectic group if there exists a symplectic space $W$ with $\dim W \leq 2\left[ \frac{b+2}{2} \right]$ and an extension $\widetilde{\pi}$ of $\pi$ to $\mathrm{O}(V)$ such that $\widetilde{\pi}$ is   the global $\psi$-theta lift of an irreducible cuspidal automorphic representation of $G'=\Mp_{2r}(W)$,  i.e. there exists $\tau\in\cA_{\rm cusp}(G')$ such that  $\tilde{\pi}\hookrightarrow \theta_{\psi,V}(\tau)$. 
\end{definition}

\subsection{Cohomological representations} 
Throughout this section, we let $E$ be a finite dimensional irreducible representation of $G(\RR)$. Let $K_\RR \cong \SO (2) \times \SO (b)$ be as above. Denote by $\theta$ the corresponding Cartan involution and let $\mathfrak{g}_0 = \mathfrak{k}_0 \oplus \mathfrak{p}_0$ be the associated Cartan decomposition of the Lie algebra $\mathfrak{g}_0$ of $G(\RR )$. We fix a Cartan subalgebra $\mathfrak{t}_0 \subset \mathfrak{k}_0$. We shall denote by $\mathfrak{a}$ the complexification of a real Lie algebra $\mathfrak{a}_0$.

A unitary representation $\pi_\RR$ of $G(\RR)$ is {\it cohomological} (with respect to $E$) if its associated  $(\frg,K_\RR)$-module $\pi_\RR^{\infty}$ has nonzero relative Lie algebra cohomology, i.e. 
\begin{equation} \label{cohomrep}
H^\bullet (\frg,K_\RR ; \pi_\RR^\infty \otimes E) \neq 0.
\end{equation}

The unitary $(\frg,K_\RR)$-modules with nonzero cohomology have been classified by Vogan and Zuckerman \cite{VZ84}. They are determined by $\theta$-stable parabolic subalgebras $\mathfrak{q} \subset \mathfrak{g}$: $\mathfrak{q} = \mathfrak{l} \oplus \mathfrak{u}$, where $\mathfrak{l}$ is the 
centralizer of an element $X \in i \mathfrak{t_0}$ and $\mathfrak{u}$ is the span of the positive roots of $X$ in $\mathfrak{g}$. Then the Lie algebra $\mathfrak{l}$ is the complexification of $\mathfrak{l}_0 = \mathfrak{l} \cap \mathfrak{g}_0$ and we let $L$ be the connected subgroup of $G(\RR)$ with Lie algebra $\mathfrak{l}_0$. 
Associated to $\mathfrak{q}$, there is a well-defined, irreducible representation $V(\mathfrak{q})$ of $K_\RR$ that occurs with multiplicity one in $\wedge^R \mathfrak{p}$ where $R = \dim (\mathfrak{u} \cap \mathfrak{p} )$. Now if \eqref{cohomrep} holds there exists some $\mathfrak{q}$ such that the Cartan product of $V(\mathfrak{q})$ and $E^*$ occurs with multiplicity one in $\wedge^R \mathfrak{p} \otimes E^*$ and the $(\mathfrak{g} , K_\RR )$-module $\pi_\RR^\infty$ is isomorphic to some irreducible $(\mathfrak{g} , K_\RR )$-module $A_{\mathfrak{q}} (E)$ characterized by the following two properties:
\begin{enumerate}
\item $A_{\mathfrak{q}} (E)$ is unitary with the same infinitesimal character as $E$.
\item The Cartan product of $V(\mathfrak{q})$ and $E^*$ occurs (with multiplicity one) in $A_{\mathfrak{q}} (E)$. 
\end{enumerate}

In our case $K_\RR = \SO (2) \times \SO (b)$ acts on $\mathfrak{p} = (\C^2)^* \otimes \C^b$ through the standard representation of $\SO (2)$ on $\C^2$ and the standard representation of $\SO(b)$ on $\C^b$. We denote by $\C^+$ and $\C^-$ the $\C$-span of the vectors $e_1+ie_2$ and $e_1-ie_2$ in 
$\C^2$. The two lines $\C^+$ and $\C^-$ are left stable by $\SO(2)$. This yields a decomposition 
$\mathfrak{p} = \mathfrak{p}^+ \oplus \mathfrak{p}^-$ which corresponds to the decomposition given by the natural complex structure on $\mathfrak{p}_0$. For each non-negative integer $p$ the $K_\RR$-representation $\wedge^p \mathfrak{p} =
\wedge^p (\mathfrak{p}^+ \oplus \mathfrak{p}^-)$ decomposes as the sum:
$$\wedge^p \mathfrak{p} = \bigoplus_{r+s=p}  \wedge^r \mathfrak{p}^+ \otimes \wedge^r \mathfrak{p}^-.$$
The $K_\RR$-representations $\wedge^r \mathfrak{p}^+ \otimes \wedge^s \mathfrak{p}^-$ are not irreducible in general: there is 
at least a further splitting given by the Lefschetz decomposition:
$$\wedge^r \mathfrak{p}^+ \otimes \wedge^s \mathfrak{p}^- = \bigoplus_{k=0}^{\min (r,s)} \tau_{r-k , s-k}.$$
One can check that for $2(r+s) <b$ each $K_\RR$-representation $\tau_{r,s}$ is irreducible.  
Moreover in the range $2(r+s) <b$  only those with $r=s$ can occur as a $K_\RR$-type $V(\mathfrak{q})$ associated to a cohomological module. 
In the special case $r=r$ one can moreover check that each $\tau_{r,r}$ 
is irreducible as long as $r < b$; it is isomorphic to some $V(\mathfrak{q})$ where the Levi subgroup $L$ associated to $\mathfrak{q}$ is isomorphic to $C \times \SO_0 (2 , b-2r )$ with $C \subset K_\RR$. 

It follows in particular that if $\pi_\RR$ is an irreducible unitary representation of $G(\RR )$ that satisfies 
$$H^{r,r} (\frg,K_\RR ; \pi_\RR^\infty \otimes E) \neq 0$$
for some $r<b$ then $\pi_\RR^\infty$ is isomorphic to the unique unitary $(\mathfrak{g} , K_\RR )$-module that has the same infinitesimal character as $E$ and contains the Cartan product of $\tau_{r,r}$ and $E^*$. We shall denote by $A_{r,r} (E)$ this $(\mathfrak{g} , K_\RR )$-module. We have:
$$H^{i,j} (\frg , K ; A_{r,r} (E) \otimes E) = \left\{ 
\begin{array}{ll}
\C & \mbox{ if } r \leq i=j \leq  b -r, \ 2i \neq b \\
\C + \C & \mbox{ if } 2i=2j=b \\ 
0 & \mbox{ otherwise}.
\end{array} \right.
$$
We refer to \cite[\S 5.2 and 5.4]{BMM16} for more details. 

Note that $A_{r,r} (E)$ can only contribute to even degree cohomology. The following vanishing result therefore follows from the above classification (and Matsushima's formula \eqref{decomposition} below).

\begin{proposition} \label{oddvanish}
For any {\rm odd} degree $i< b/2$ and any local system $\bE$ we have 
$$H^{i} (Y_K,\bE)=0.$$
\end{proposition}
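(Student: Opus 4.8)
The plan is to reduce the assertion to the representation-theoretic classification of cohomological $(\frg, K_\RR)$-modules recalled above, after first replacing ordinary cohomology by $L^2$-cohomology. There is nothing to prove unless $b \geq 3$, since otherwise no odd integer is less than $b/2$; so I assume $b \geq 3$, which guarantees that any odd $i < b/2$ also satisfies $i < b-1$. First I would invoke Corollary \ref{HS}: as the boundary of $\overline{Y}_K^{bb}$ has dimension at most one, the map $\xi_i$ identifies $H^i_{(2)}(Y_K,\bE)$ with $H^i(Y_K,\bE)$ for every $i < b-1$. It therefore suffices to show that the $L^2$-cohomology $H^i_{(2)}(Y_K,\bE)$ vanishes in odd degrees $i < b/2$.

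Next I would apply Matsushima's formula \eqref{decomposition}, which expresses $H^i_{(2)}(Y_K,\bE)$ as a direct sum $\bigoplus_\pi m(\pi)\, H^i(\frg, K_\RR; \pi_\RR^\infty \otimes E)$ over the discretely occurring automorphic representations $\pi$. A nonzero summand forces $\pi_\RR^\infty$ to be cohomological, hence isomorphic to some $A_\mathfrak{q}(E)$ by Vogan--Zuckerman. Such a module has nonzero relative Lie algebra cohomology only in degrees $\geq R_0 := \dim(\mathfrak{u}\cap\mathfrak{p})$, and its distinguished $K_\RR$-type $V(\mathfrak{q})$ lies in $\wedge^{R_0}\mathfrak{p}$. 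If $A_\mathfrak{q}(E)$ contributes in degree $i$, then $R_0 \leq i < b/2$, so $2R_0 < b$; by the classification in this stable range, $V(\mathfrak{q})$ must be of the form $\tau_{r,r}$ with $R_0 = 2r$, and hence $\pi_\RR^\infty \cong A_{r,r}(E)$.

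Finally I would appeal to the computation of $H^{i,j}(\frg, K_\RR; A_{r,r}(E)\otimes E)$ displayed above: it is supported on bidegrees with $i=j$, hence only in even total degree. Since our $i$ is odd, $H^i(\frg, K_\RR; A_{r,r}(E)\otimes E) = 0$, so every term of Matsushima's formula vanishes and $H^i_{(2)}(Y_K,\bE)=0$, as claimed.

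The delicate point, and the step I would check most carefully, is the middle one: I must ensure that the stable-range restriction ``only $\tau_{r,r}$ occurs as $V(\mathfrak{q})$'' applies to the bottom $K_\RR$-type of \emph{every} module that can contribute in degree $i<b/2$, i.e.\ that $2R_0 < b$ rather than merely $2i<b$. This is precisely guaranteed by the concentration of $(\frg, K_\RR)$-cohomology in degrees $\geq R_0$, which forces $R_0 \leq i$. Everything else is imported: the identification of $L^2$- and ordinary cohomology from Zucker's conjecture together with Harris--Zucker (Theorem \ref{TL2map} and Corollary \ref{HS}), and the Vogan--Zuckerman classification summarized above.
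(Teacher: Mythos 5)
Your proof is correct and follows essentially the same route as the paper: reduce to $L^2$-cohomology via Corollary \ref{HS}, apply Matsushima's formula \eqref{decomposition}, and invoke the Vogan--Zuckerman classification to conclude that in degrees $i<b/2$ only the modules $A_{r,r}(E)$ can contribute, and these have cohomology only in even degrees. Your "delicate point" (that the stable-range restriction applies because cohomology of $A_{\mathfrak{q}}(E)$ is concentrated in degrees $\geq R_0$, forcing $2R_0<b$) is exactly the step the paper's terse proof leaves implicit, and you resolve it correctly.
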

\begin{proof} 
Since $i < b/2$ implies $i<b-1$ we have $H^{i} (Y_K,\bE) \cong H_{(2)}^{i} (Y_K,\bE)$. The proposition then follows from the fact that in the range $r+s < b/2$ the only $K_\RR$-types $\tau_{r,s}$ than can occur in a cohomological representation are the ones with $r=s$. 
\end{proof}

\subsection{Surjectivity of the theta lift}
The following theorem is proved in \cite[Theorem 7.7]{BLMM16} when $E$ is the trivial representation. It was proved in \cite{BMM16} for general $E$ but under the hypothesis that $\pi$ is cuspidal. To be able to deal with residual representations as well was the main input of \cite{BLMM16}. Though the results of \cite{BLMM16} only addresses the case where $E$ is the trivial representation, the proofs (see especially the key Proposition 6.2) only make use of the fact that cohomological representations have integral regular infinitesimal character which is still true for representations that are cohomological with respect to $E$. We shall therefore not repeat the proof and simply refer to \cite{BLMM16} for the proof of:

\begin{theorem}\label{surjoftheta}
	Let $\pi=\otimes \pi_v\in \cA(G)$ be a square integrable automorphic representation of $G$. Suppose that the $(\mathfrak{g} , K_\RR )$-module $\pi_\RR^\infty$ of the local Archimedean component of $\pi$ is isomorphic to some cohomological module $A_{r,r} (E)$ with $3r<b+1$. Then there exists a cuspidal representation $\tau$ of $\Mp_{2r}(\AA)$ such that $\pi$ (up to a twist by a quadratic character) is in the image of the theta lift of $\tau$.
\end{theorem}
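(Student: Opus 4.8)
The plan is to deduce the statement from Arthur's endoscopic classification of the discrete automorphic spectrum, combined with the dictionary---due to Rallis and M\oe{}glin---that describes the global theta correspondence in terms of Arthur parameters. In this picture the cohomological hypothesis at the Archimedean place is what pins down the local parameter, while the inequality $3r<b+1$ is exactly what keeps us inside the range in which the downward theta lift first occurs and lands on a \emph{cuspidal} representation. Concretely, I would attach to the square integrable $\pi\in\cA(G)$ its global Arthur parameter, a formal sum $\psi=\boxplus_i\,\sigma_i\boxtimes[d_i]$ of cuspidal representations $\sigma_i$ of general linear groups twisted by irreducible representations $[d_i]$ of the Arthur $\SL_2$, and then read off the theta-lift structure from the shape of $\psi$.

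The first real step is to constrain $\psi$ using the hypothesis that the Archimedean component $\pi_\RR^\infty$ is the cohomological module $A_{r,r}(E)$. Via the Adams--Johnson description of cohomological Arthur packets, the Lefschetz type $\tau_{r,r}$ (whose associated Levi is $C\times\SO_0(2,b-2r)$) forces the local parameter $\psi_\RR$ to contain a distinguished $\SL_2$-factor of a size determined by $r$ and $b$, reflecting precisely the non-temperedness of $A_{r,r}(E)$. The crucial point---and this is exactly where the integral \emph{regular} infinitesimal character enters, playing the role of Proposition~6.2 of \cite{BLMM16}---is that regularity rigidifies the parameter, so that this Archimedean constraint is inherited by the global $\psi$. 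This is the only place where the argument uses a property of cohomological representations, and since regularity of the infinitesimal character persists for representations cohomological with respect to $E$, the reduction to \cite{BMM16,BLMM16} is legitimate.

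The second step is local and purely Archimedean: I would show that $A_{r,r}(E)$ lies in the image of the local theta lift from $\Mp_{2r}(\RR)$, realizing it as the Howe lift of an explicit cohomological representation of $\Mp_{2r}(\RR)$ for the real dual pair $(\SO(2,b),\Mp_{2r})$. This is a direct computation with the Vogan--Zuckerman parametrization and the behaviour of $\theta$-stable data under the oscillator representation, valid precisely in the stable/convergent range recorded by $3r<b+1$. Together with nonvanishing of the local theta lifts at the finite places, this supplies all the local ingredients for the global lift.

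The global nonvanishing is the heart of the matter. I would run the theta lift of $\pi$ down the metaplectic tower, let $r_0\le r$ be the first occurrence index, and invoke the Rallis tower property: nonvanishing at first occurrence produces a cuspidal $\tau\in\cA_{\rm cusp}(\Mp_{2r_0})$ whose lift back up recovers $\pi$, i.e.\ $\pi\hookrightarrow\theta_{\psi,V}(\tau)$. To force $r_0=r$ one uses the Rallis inner product formula, which expresses the norm of the downward lift as a special value of a standard $L$-function times the local theta integrals computed above; regularity of the infinitesimal character keeps this $L$-value away from the zeros or poles that would cause premature vanishing. \textbf{The hard part} is that $\pi$ need only be square integrable, hence possibly residual rather than cuspidal, and for residual $\pi$ the convergence of the theta integral and the spectral analysis underlying the inner product formula are considerably more delicate; verifying that every input depends only on the integral regular infinitesimal character, and not on cuspidality, is exactly the additional work of \cite{BLMM16} beyond \cite{BMM16}. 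Finally, the choice of extending $\pi$ from $\SO(V)$ to $\mathrm{O}(V)$ in the correspondence, together with the dependence on the additive character $\psi$, accounts for the twist by a quadratic character in the statement.
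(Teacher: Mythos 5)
Your proposal is correct and follows essentially the same route as the paper, which gives no independent argument here but defers to \cite[Theorem 7.7]{BLMM16} and to \cite{BMM16}: those proofs consist precisely of the ingredients you outline --- Arthur's endoscopic classification, the Adams--Johnson/Vogan--Zuckerman constraint at the Archimedean place rigidified by the integral \emph{regular} infinitesimal character (this is exactly the role of the key Proposition 6.2 of \cite{BLMM16}, as you note), the Rallis/M\oe glin theta machinery for global nonvanishing, and the passage from cuspidal to residual $\pi$ as the genuinely new input of \cite{BLMM16} beyond \cite{BMM16}. The only slight imprecision is that the hypothesis $3r<b+1$ enters through the rigidification of the global Arthur parameter (ensuring the distinguished highly non-tempered factor is pinned down) rather than through a stable-range condition on the local Archimedean lift, but this does not change the structure or validity of your outline.
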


\subsection{Theta classes in cohomology groups} We keep all notations as in \S5.1. We  construct some special cohomology classes (w.r.t the local system $E$) on $Y_K$ from the relative Lie algebra cohomology of $(\frg,K_\RR )$-modules. We write
$$\pi=\pi_{\RR } \otimes\pi_f \in \cA_2(G),$$ 
and let $\pi^K_f$ be the finite dimensional subspaces of $K$-invariant vectors in $\pi_f$.  According to Matsushima's formula and Langlands spectral decomposition (see~\cite{BC83}, \cite[\S7.8]{BLMM16}), we have   
\begin{equation}\label{decomposition}
H^i_{(2)}(X_K, \bE)\cong \bigoplus\limits_{\pi\in \cA(G)} m(\pi) H^i(\frg,K_\RR ; \pi_\RR^\infty \otimes E)\otimes \pi_f^K.
\end{equation}
where $\pi$ occurs discretely in  $L^2(G(\QQ)\backslash G( \AA ))$  with multiplicity $m(\pi)$.  Following \cite{BLMM16}, we define the  space of theta classes $$H^i_\theta(X_K,\bE)\subseteq H^{i}(X_K,\bE)$$ as the subspace generated by the image of $H^i(\frg, K_\RR ; \pi_\RR^\infty\otimes E)$ via \eqref{decomposition} and \eqref{L2map},  where $\pi$ varies among the irreducible representations in $\cA(G)$ which are in the image of the $\psi$-cuspidal theta correspondence from a smaller symplectic group.  The space $H^i_\theta(Y_K,\bE)$ of theta classes on $Y_K$ are naturally defined by restriction. The main result in \cite{BLMM16} can be reformulated as:

\begin{theorem} \label{T:531}
	There is an inclusion 
	\begin{equation}\label{surjoftheta}
	H^{2r}_\theta(Y_K,\bE)\subseteq \mathrm{SC}^{r}_{\rm hom}(Y_K,\bE).
	\end{equation}
Moreover,
	$H^{2r}_\theta(Y_K,\bE)=H^{r,r}(Y_K,\bE)$ when $r<\frac{b+1}{3}$ or $r>\frac{2b-1}{3}$.
\end{theorem}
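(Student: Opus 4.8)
The plan is to derive both parts of Theorem~\ref{T:531} from Matsushima's decomposition \eqref{decomposition} together with the two deep inputs recalled above: the Kudla--Millson--Funke realization of theta lifts as special cycles for the inclusion, and the surjectivity of the theta correspondence proved in \cite{BLMM16, BMM16} for the equality. I would begin with the inclusion $H^{2r}_\theta(Y_K,\bE)\subseteq \mathrm{SC}^{r}_{\rm hom}(Y_K,\bE)$, which is the geometric half and uses no surjectivity. A class in $H^{2r}_\theta$ arises, through \eqref{decomposition} and the map $\xi_{2r}$ of \eqref{L2map}, from a representation $\pi\in\cA(G)$ with Archimedean component $A_{s,s}(E)$ that lies in the image of the cuspidal $\psi$-theta lift from some $\Mp_{2s}(\AA)$ with $s\le r$. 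Pairing the Kudla--Millson Schwartz form, through the theta integral \eqref{theta}, against a cuspidal $f$ produces a closed $\bE$-valued $2s$-form on $Y_K$ whose cohomology class is, by Funke--Millson \cite{FM06}, a finite combination of the coefficient cycles $[c(\bx,g,K)_T]$; thus the bottom-degree contribution of $\pi$ lands in $\mathrm{SC}^{s}_{\rm hom}(Y_K,\bE)$. The contributions in the higher degrees $2r$ ($s< r\le b-s$) are the image under $\cup\,\lambda^{r-s}$ of the bottom one, and since $\lambda\in\mathrm{SC}^1_{\rm hom}(Y_K)$ by Lemma~\ref{hodge} while cup product with $\lambda$ raises the special-cycle filtration by one, these also lie in $\mathrm{SC}^{r}_{\rm hom}(Y_K,\bE)$. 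Summing over $\pi$ gives the inclusion.

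For the equality I would treat the two inclusions separately. That $H^{2r}_\theta\subseteq H^{r,r}$ is formal: by the computation of $H^{\bullet}(\frg,K_\RR;A_{s,s}(E)\otimes E)$ in \S6.2 these modules have cohomology only in bidegrees $(i,i)$, so in total degree $2r$ every theta class is of Hodge type $(r,r)$. For the reverse inclusion, the Matsushima decomposition \eqref{decomposition}, restricted to the component $Y_K$, writes $H^{r,r}_{(2)}(Y_K,\bE)$ as a sum of contributions of the $\pi$ with $\pi_\RR^\infty\cong A_{s,s}(E)$ satisfying $s\le r\le b-s$, i.e.\ $s\le\min(r,b-r)$. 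A direct check gives that $r<\frac{b+1}{3}$ forces $\min(r,b-r)=r<\frac{b+1}{3}$, while $r>\frac{2b-1}{3}$ forces $\min(r,b-r)=b-r<\frac{b+1}{3}$; in either case every contributing module satisfies $3s<b+1$. The surjectivity theorem then applies to each such $\pi$ and shows that, up to a quadratic twist, it is the global theta lift of a cuspidal representation of $\Mp_{2s}(\AA)$, hence lies in the image of the cuspidal $\psi$-theta correspondence. Therefore every summand of $H^{r,r}_{(2)}(Y_K,\bE)$ is a theta class.

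It remains to pass from $L^2$-cohomology to the ordinary cohomology $H^{r,r}(Y_K,\bE)$ in which $H^{2r}_\theta$ is defined. In the low range $r<\frac{b+1}{3}$ one has $2r<b-1$, so Corollary~\ref{HS} identifies $H^{r,r}_{(2)}(Y_K,\bE)$ with $H^{r,r}(Y_K,\bE)$ compatibly with the Hodge structures and with $\xi_{2r}$, and the equality $H^{2r}_\theta=H^{r,r}$ follows. In the high range $r>\frac{2b-1}{3}$ one has $2r>b-1$, so Corollary~\ref{HS} no longer applies directly; here I would instead invoke Poincar\'e duality on $Y_K$ together with the compactly-supported analogue of Corollary~\ref{HS} (equivalently, hard Lefschetz on $IH^\bullet(\overline{Y}_K^{bb},\bE)$) to reduce to the already-settled low-degree case $b-r<\frac{b+1}{3}$.

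The representation-theoretic core --- reading off the contributing modules $A_{s,s}(E)$ and applying the surjectivity theorem once $3s<b+1$ --- is clean, so the two genuine difficulties lie elsewhere. The first is the geometric input: one must verify that the Funke--Millson theta lift of a cuspidal form is genuinely \emph{cohomologous} to an explicit combination of the coefficient cycles $[c(\bx,g,K)_T]$, so that the inclusion lands in $\mathrm{SC}^\ast_{\rm hom}(Y_K,\bE)$ rather than merely in the image of $\xi_{2r}$. The second, which I expect to be the main obstacle, is the $L^2$-to-ordinary comparison in the high range $r>\frac{2b-1}{3}$: there Corollary~\ref{HS} is unavailable and one must argue by duality, and it is precisely this duality that accounts for the symmetric shape of the hypotheses $r<\frac{b+1}{3}$ or $r>\frac{2b-1}{3}$.
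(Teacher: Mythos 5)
Your overall route is the same as the paper's, which is far terser: the inclusion is proved there by invoking (the proof of) \cite[Proposition 11.3]{BMM16} --- precisely the Funke--Millson/Kudla--Millson realization you sketch --- and the equality is deduced from the surjectivity theorem (Theorem \ref{surjoftheta}), the decomposition \eqref{decomposition}, and the Vogan--Zuckerman classification, exactly as you do. In fact your range analysis is the \emph{correct} reading of a step the paper states imprecisely: the paper asserts that $H^{r,r}(\frg,K_\RR;\pi_\RR^\infty\otimes E)\neq 0$ forces $\pi_\RR^\infty\cong A_{r,r}(E)$, but $A_{s,s}(E)$ with $s<r$ is a counterexample (its $H^{r,r}$ is nonzero, arising from the $\tau_{s,s}$-component of $\wedge^{r,r}\mathfrak{p}$ in the Lefschetz decomposition); your observation that the contributing modules are the $A_{s,s}(E)$ with $s\le\min(r,b-r)$, and that both hypotheses $r<\frac{b+1}{3}$ and $r>\frac{2b-1}{3}$ exist precisely to force $3s<b+1$ for every such $s$, is what actually makes the proof work. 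Your worry about the $L^2$-to-ordinary passage in the high range is also legitimate (the paper is silent on it), and the hard-Lefschetz-on-$IH$ repair is the right one, since cupping with $\lambda^{2r-b}$ acts within each summand of \eqref{decomposition} and hence carries theta classes to theta classes.

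The one step of yours that genuinely fails is the Lefschetz argument in your proof of the inclusion. You claim the degree-$2r$ contribution of a representation with $\pi_\RR^\infty\cong A_{s,s}(E)$ is $\lambda^{r-s}\cup(\text{its degree-}2s\text{ contribution})$. By the computation recalled in \S 6.2, $H^{i,i}(\frg,K_\RR;A_{s,s}(E)\otimes E)$ is one-dimensional for $s\le i\le b-s$ with $2i\neq b$, but \emph{two}-dimensional when $2i=b$; cup product with $\lambda$ only reaches the non-primitive line there, so at the middle degree $2r=b$ (with $b$ even) your argument misses the primitive classes, whereas the theorem asserts the inclusion for every $r$. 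This is exactly the point where one needs the cited \cite[Proposition 11.3]{BMM16}: there the degree-$2r$ theta classes are represented directly by theta lifts of degree-$2r$ Schwartz forms (the $\varphi_{r,0}$, resp.\ the Funke--Millson forms with coefficients), whose Fourier coefficients are the weighted cycles of \eqref{specialcycle} --- note the built-in factor $\lambda^{r-\rank\beta}$, which is how the lower-rank representations enter in degree $2r$ --- rather than by cupping lower-degree classes with $\lambda$. A smaller quibble of the same nature: for $b=3$ and $r=1$ one has $2r=b-1$, so Corollary \ref{HS} does not literally apply in your low-range step; one should instead use that the image of $\xi_{2r}$ is the lowest-weight part of the mixed Hodge structure (Theorem \ref{TL2map}).
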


\begin{proof}
The inclusion \eqref{surjoftheta} follows from (the proof of) \cite[Proposition 11.3]{BMM16}.\footnote{Beware that our space $\mathrm{SC}^\ast$ is a priori larger than the one defined in \cite{BMM16}.} The last assertion follows from Theorem \ref{surjoftheta}, the decomposition \eqref{decomposition} and the fact that if $\pi_\RR$ is an irreducible unitary representation of $G(\RR )$ that satisfies 
$$H^{r,r} (\frg,K_\RR ; \pi_\RR^\infty \otimes E) \neq 0$$
for some $r<b$ then $\pi_\RR^\infty$ is isomorphic to $A_{r,r} (E)$. Note that the fact that $H^{2r}_\theta(Y_K,\bE)$ is contained in $H^{r,r}(Y_K,\bE)$ obviously follows from the inclusion \eqref{surjoftheta}. It also follows from the fact that the only cohomological representations that are in the image of the local Archimedean theta correspondence are the $A_{r,r} (E)$, see~\cite[\S7.2]{BLMM16} and \cite{Li}. 
\end{proof}

Let $H^{2k}_{\rm alg}(Y_K,\CC)$ be the image of $\CH^k(Y_K)\otimes \CC$ in $H^{2k}(Y_K,\CC)$ via the cycle class map.  Following \cite{Dan16}, one can use Zucker's conjecture and the hard Lefschetz theorem on intersection cohomology to get the following result

\begin{corollary} \label{Cpet}
When $b>3$, we have
\begin{equation}\label{1cycle}
H^{2b-2}_{alg}(Y_K,\CC)=0.
\end{equation}
\end{corollary}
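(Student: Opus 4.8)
The plan is to show that the codimension $(b-1)$ algebraic classes, which are Hodge classes of type $(b-1,b-1)$ sitting in the \emph{lowest} weight part of $H^{2b-2}(Y_K,\CC)$, are forced to vanish by transporting them via hard Lefschetz down to degree $2$ and exploiting that the boundary of $\overline{Y}_K^{bb}$ has dimension at most one.

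First I would record that, for any smooth quasi-projective variety, the image of the cycle class map $\CH^{b-1}(Y_K)\otimes\CC\to H^{2b-2}(Y_K,\CC)$ lands in the lowest weight subspace $W_{2b-2}H^{2b-2}(Y_K,\CC)$: a codimension $(b-1)$ cycle extends to a smooth compactification, where its class is pure of weight $2b-2$, and restriction to the open part can only lower weights. By Theorem \ref{TL2map} this lowest weight subspace is exactly $\mathrm{Im}(\xi_{2b-2})$, so $H^{2b-2}_{\mathrm{alg}}(Y_K,\CC)\subseteq\mathrm{Im}(\xi_{2b-2})$ and every such class is of Hodge type $(b-1,b-1)$.

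Next I would transfer the problem to degree $2$. By Zucker's conjecture $IH^{\bullet}(\overline{Y}_K^{bb})\cong H^{\bullet}_{(2)}(Y_K)$ satisfies hard Lefschetz, so $L^{b-2}=\lambda^{b-2}\cup(-)\colon IH^{2}\xrightarrow{\sim} IH^{2b-2}$. Because the boundary has dimension at most one, Corollary \ref{HS} gives $\xi_2\colon H^2_{(2)}(Y_K)\xrightarrow{\sim}H^2(Y_K)$; dualizing this isomorphism (Poincar\'e duality on $Y_K$ together with the self-duality of $IH$) yields, in the range $b>3$ where $2b-2>b+1$, an isomorphism $H^{2b-2}_c(Y_K)\xrightarrow{\sim}IH^{2b-2}$. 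Consequently $\mathrm{Im}(\xi_{2b-2})$ is the interior cohomology $H^{2b-2}_{!}(Y_K)=\mathrm{Im}\bigl(H^{2b-2}_c(Y_K)\to H^{2b-2}(Y_K)\bigr)$, which is pure of weight $2b-2$; this is precisely where the hypothesis $b>3$ enters. Since $\xi$ commutes with $L$, the $(b-1,b-1)$-part of $\mathrm{Im}(\xi_{2b-2})$ equals $\lambda^{b-2}\cup H^{1,1}(Y_K)$, and under the duality $H^{2b-2}_{!}\cong (H^2_{!})^{\vee}(-b)$ it is identified with the $(1,1)$-part of the interior cohomology $H^2_{!}(Y_K)$.

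It therefore remains to prove that $H^2_{!}(Y_K,\CC)$ carries no class of type $(1,1)$, equivalently that $\lambda^{b-2}\cup H^{1,1}(Y_K)=0$ in $H^{2b-2}(Y_K,\CC)$; this is the heart of the matter and the step I expect to be the main obstacle. Here I would invoke Theorem \ref{T:531}: since $1<\tfrac{b+1}{3}$, every class of type $(1,1)$ is a special cycle class lying in the image of the theta correspondence from the \emph{smaller} group $\Mp_2$, and such classes are Eisenstein in nature---their closures meet the one-dimensional boundary, so they restrict nontrivially to the link at infinity and do not lift to compactly supported cohomology. Hence they lie outside the interior part $H^2_{!}$, forcing $(H^2_{!})^{(1,1)}=0$. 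The delicate point is exactly this control of the boundary behaviour of the special divisor classes, i.e.\ the vanishing of the interior $(1,1)$-cohomology; granting it, the $(b-1,b-1)$-part of $\mathrm{Im}(\xi_{2b-2})$ vanishes, and since the algebraic classes live there we conclude $H^{2b-2}_{\mathrm{alg}}(Y_K,\CC)=0$ for $b>3$.
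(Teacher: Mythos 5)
Your reduction---algebraic classes lie in the lowest-weight part $\mathrm{Im}(\xi_{2b-2})$, hard Lefschetz on $IH^{\bullet}(\overline{Y}_K^{bb},\CC)$ transports the problem to degree $2$, and duality identifies $\mathrm{Im}(\xi_{2b-2})$ with the interior cohomology $H^{2b-2}_{!}(Y_K)$ dual to $H^2_{!}(Y_K)$---is sound and parallels the first half of the paper's argument. The genuine gap is the step you yourself flag as the heart of the matter: the vanishing $\lambda^{b-2}\cup H^{1,1}(Y_K)=0$, equivalently $(H^2_{!})^{(1,1)}=0$. Your argument for it is that every $(1,1)$-class is a special cycle class obtained by theta lift from $\Mp_2$, and that such classes are ``Eisenstein in nature'', meet the one-dimensional boundary, and therefore cannot be interior. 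This fails on two counts. First, the Kudla--Millson classes are theta lifts of \emph{cusp} forms; they are square-integrable automorphic classes, residual or even cuspidal, not Eisenstein---indeed the paper stresses in \S 6.3 that handling residual representations was the main input of \cite{BLMM16}---and nothing in Theorem \ref{T:531} says these classes have nonzero restriction to the link at infinity. A divisor whose closure meets the boundary can perfectly well have a cohomology class lying in $\mathrm{Im}\bigl(H^2_c(Y_K)\to H^2(Y_K)\bigr)$. Second, even if each individual special divisor class failed to be interior, that would not force $(H^2_{!})^{(1,1)}=0$: these classes span $H^{1,1}(Y_K)$, and a subspace can meet the span of a family of vectors nontrivially even when no single member of the family lies in it. So the key vanishing is asserted, not proved.

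The paper establishes exactly this vanishing, but by a geometric mechanism absent from your proposal. Since $IH^2(\overline{Y}_K^{bb},\CC)\cong H^2(Y_K,\CC)$ is spanned by classes of codimension-one connected cycles $c(U,g,K)$ (by \cite{BLMM16} and Zucker's conjecture), hard Lefschetz shows that $\mathrm{Im}(\xi_{2b-2})$ is spanned by the classes $\lambda^{b-2}\cdot c(U,g,K)$. Each of these vanishes: by the projection formula it is the pushforward of $(\lambda|_{c(U,g,K)})^{b-2}$, and $c(U,g,K)$ is itself an orthogonal Shimura variety of dimension $b-1\geq 3$ on which, by \cite{GT05}, powers of the Hodge bundle of codimension at least $(b-1)-1=b-2$ are already zero. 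This is where the hypothesis $b>3$ actually enters (it guarantees $\dim c(U,g,K)\geq 3$), rather than through the purity and duality considerations of your write-up. To salvage your route you would need to prove $\lambda^{b-2}\cup H^{1,1}(Y_K)=0$ directly, and the \cite{GT05} vanishing on the sub-Shimura varieties is precisely the missing ingredient that does this.
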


\begin{proof}Let us quickly sketch the proof. Let $\lambda$ be the Hodge line bundle on $Y_K$.  
By \cite{BLMM16} and Zucker's conjecture, we have that
\begin{equation}
IH^2(\overline{Y}^{bb}_K,\CC)\cong H^2(Y_K,\CC)
\end{equation}
 is generated by connected cycles of codimension one. Next, as $\lambda$ is ample, the Hard Lefschetz theorem for intersection cohomology implies that the image of the map 
 \begin{equation}
\xi_{2b-2} :IH^{2b-2}(\overline{Y}^{bb}_K,\CC)\rightarrow H^{2b-2}(Y_K,\CC)
 \end{equation}
is spanned by the class of $\lambda^{b-2}\cdot c(U,g,K)$.
By \cite{GT05}, the class $\lambda^{b-2}\cdot c(U,g,K)$ is zero in $H^{2b-2}(Y_K,\CC)$ when $\dim c(U,g,K)\geq 3$. Since  $H^{2b-2}_{alg}(Y_K,\CC)$ lies in the image of $\xi_{2b-2}$, this proves the assertion. 
\end{proof}

Let us finally define theta classes in the cohomology groups of the moduli spaces discussed in Section 3. 
Assume that $E$ is a finite dimensional representation of $\SO(\Lambda_\RR )$.  Recall that there is a period map $\cP_\Sigma : \cF_{\Sigma,h}\rightarrow \Gamma_{\Sigma}\backslash D_\Sigma$.  Let $\EE=\cP_\Sigma^\ast (\bE)$ be the pullback of $\bE$ to $\cF_{\Sigma,h}$. Then we define
 \begin{equation}
H^\bullet_\theta(\cF_{\Sigma,h},\EE):=\cP_\Sigma^\ast (H_\theta^\bullet(\Gamma_{\Sigma }\backslash D_\Sigma ,\bE)).
 \end{equation}
as the subspace of theta classes on $\cF_{\Sigma,h}$.
It follows from Theorem \ref{T:531} that:
\begin{corollary}\label{surjmoduli} There is an inclusion 
	$$H^{2r}_\theta(\cF_{\Sigma,h}, \EE)\subseteq \cP_\Sigma^\ast (\mathrm{SC}^r_{\rm hom} (\Gamma_{\Sigma}\backslash D_\Sigma ,\bE ))$$ for all $r$. 
\end{corollary}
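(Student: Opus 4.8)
The plan is to obtain this as an immediate consequence of Theorem \ref{T:531}, transported through the period map $\cP_\Sigma$.

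First I would identify the target $\Gamma_\Sigma \backslash D_\Sigma$ of the period map as a connected component of an orthogonal Shimura variety of exactly the kind treated in Section \ref{Sec5}. Indeed, by the Torelli description in \S\ref{Slattice} the domain $D_\Sigma$ is a connected component of the period domain attached to the quadratic space $V = \Sigma^\perp \otimes \QQ$, whose signature is $(2,b)$ for the appropriate $b$ (since $\Lambda$ has signature $(3, b_2-3)$ and $\Sigma$ has signature $(1,r)$, the complement $\Sigma^\perp$ has signature $(2, b_2-3-r)$), and $\Gamma_\Sigma$ is the arithmetic subgroup of $\SO(\Sigma^\perp)$. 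With this dictionary $\Gamma_\Sigma \backslash D_\Sigma$ plays the role of a $Y_K$, and $\bE$ is the local system attached to a harmonic Schur functor $\SS_{[\lambda]}(V)$ as constructed in Section \ref{Sec5}. So Theorem \ref{T:531} applies verbatim and yields, for every $r$, the inclusion $H^{2r}_\theta(\Gamma_\Sigma \backslash D_\Sigma, \bE) \subseteq \mathrm{SC}^r_{\rm hom}(\Gamma_\Sigma \backslash D_\Sigma, \bE)$.

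Next I would simply push this inclusion through the pullback map $\cP_\Sigma^\ast$. Since a linear map carries a subspace inclusion to a subspace inclusion, applying $\cP_\Sigma^\ast$ gives $\cP_\Sigma^\ast(H^{2r}_\theta(\Gamma_\Sigma \backslash D_\Sigma, \bE)) \subseteq \cP_\Sigma^\ast(\mathrm{SC}^r_{\rm hom}(\Gamma_\Sigma \backslash D_\Sigma, \bE))$. The left-hand side is, by the very definition of theta classes on the moduli space recorded just above the statement, equal to $H^{2r}_\theta(\cF_{\Sigma,h}, \EE)$, while the right-hand side is the claimed target; combining the two finishes the argument.

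There is essentially no obstacle here: the entire analytic and representation-theoretic content sits in Theorem \ref{T:531}, and the corollary is a formal pullback together with the definition $H^\bullet_\theta(\cF_{\Sigma,h},\EE) := \cP_\Sigma^\ast(H_\theta^\bullet(\Gamma_{\Sigma}\backslash D_\Sigma, \bE))$. The one point worth verifying is that $\Gamma_\Sigma \backslash D_\Sigma$ genuinely fits the framework of Section \ref{Sec5} (possibly after passing to a finite-index congruence level, as in \S\ref{Slevel}), so that the theta-class and special-cycle-with-coefficients constructions are available there and are compatible with their pullbacks to $\cF_{\Sigma,h}$; this compatibility is precisely what the descent statements of \S\ref{SDescend}, extended to the lattice-polarized setting at the end of \S\ref{Slattice}, provide.
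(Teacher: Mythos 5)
Your proposal is correct and matches the paper's own (essentially one-line) argument: the corollary is deduced directly from Theorem \ref{T:531} applied to $\Gamma_\Sigma\backslash D_\Sigma$ viewed as an arithmetic quotient of the type studied in Section \ref{Sec5}, combined with the definition $H^\bullet_\theta(\cF_{\Sigma,h},\EE):=\cP_\Sigma^\ast(H_\theta^\bullet(\Gamma_{\Sigma}\backslash D_\Sigma,\bE))$ and the fact that $\cP_\Sigma^\ast$ preserves inclusions. Your additional checks (the signature $(2,b_2-3-r)$ of $\Sigma^\perp$ and the descent compatibilities from \S\ref{SDescend} and \S\ref{Slattice}) are exactly the points the paper leaves implicit.
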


\section{The Funke-Kudla-Millson ring} 

In this section, we introduce special Schwartz forms at the Archimedean place and Kudla-Millson's special theta lift.  Following \cite{BMM16} and \cite{BLMM16}, this establishes a connection between special cycle classes and theta classes as defined in \S5.3. More importantly for us, we show that this yields a so called {\it Funke-Kudla-Millson ring}, or just FKM ring, in the space of differential forms with coefficients, which plays the key role in our study of the cohomological tautological conjecture.

\subsection{Special Schwartz forms}
In this subsection we let $V$ be a real quadratic space of signature $(b,2)$ of dimension $m=b+2$. We use signature $(b,2)$ rather than signature $(2,b)$ in order to follow the notations of the works of Kudla-Millson and Funke-Millson; when applied to our geometric situation we will reverse the variables.  

Pick an oriented orthogonal basis $\{ v_i \}$ of $V$ such that $(v_\alpha , v_\alpha ) = 1$ for $\alpha = 1 , \ldots, b$ and $(v_\mu , v_\mu ) = -1$ for $\mu=b+1 , b+2$. We shall use the notations of \cite{BMM16}.

Kudla and Millson \cite{KM90} and then Funke and Millson \cite{FM06} have constructed special Schwartz forms $\varphi$ in 
$$\left[ \mathcal{S} (V^r) \otimes A^{2r} (D) \otimes T^\ell (V) \right]^{\mathrm{O} (V)} \cong \mathrm{Hom}_{K_\RR } (\wedge^{2r} \mathfrak{p} , \mathcal{S} (V^r) \otimes T^\ell (V) ).$$
By abuse of notations we also denote by $T^\ell (V)$ the local system on $D$ associated to $T^{\ell} (V)$. If $\mathbf{x}  \in V^r$ we then have $\varphi (\mathbf{x}) \in A^{2r} (D, T^\ell (V))$. 

We shall rather work with the polynomial Fock model for the dual pair $\mathrm{O} (b,2) \times \mathrm{Sp}_{2r} (\mathbb{R})$, see \cite[Section 7]{BMM16}. We denote by $\iota$ the intertwining operator from the Sch\"odinger model to the Fock model, see \cite[Section 6]{KM90}. The map $\iota$ maps the vectors of $\mathcal{S} (V^r )$ that are finite under the action of a maximal compact subgroup $\mathrm{U} (2rm)$ of the symplectic group $\mathrm{Sp}_{2rm}$ containing the dual pair onto the space of polynomials 
$$\mathcal{P} (\mathbb{C}^{mr}) \cong \mathbb{C}[z_{1 , j} , \ldots , z_{m, j} \; : \; j=1, \ldots r ].$$
Here if $\mathbf{x} = (x_1 , \ldots , x_r ) \in V^r$ we have: 
$$x_j = \sum_{\alpha  =1}^b z_{\alpha , j}  v_\alpha  + \sum_{\mu=b+1}^{b+2} z_{\mu , j} v_\mu . $$
We will use the notation $\mathcal{P} (\mathbb{C}^{mr})_+$ to denote the polynomial in the ``positive'' variables $z_{\alpha , j}$, $1 \leq \alpha \leq b$, $1 \leq j \leq r$. We have an isomorphism of $K_\infty \times K_\infty '$-modules 
$$\mathcal{P} (\mathbb{C}^{mr})_+ \cong \mathrm{Pol} (M_{b,r} (\mathbb{C} )).$$
Here $K_\infty \cong \mathrm{O} (b) \times \mathrm{O} (2)$ is a maximal compact subgroup of $\mathrm{O} (b,2)$, the groups $K_\infty '$ is a maximal compact subgroup of $\mathrm{Sp}_{2r} (\mathbb{R})$ and the action of the group $\mathrm{O} (b)$ on $\mathrm{Pol} (M_{b,r} (\mathbb{C} ))$ is induced by the natural action on the columns (i.e., from the left) of the matrices. 

First consider the case $r=1$. Then we simply set $z_\alpha = z_{\alpha , 1}$. Note that $\mathfrak{p} \cong \mathbb{C}^b \otimes \mathbb{C}^2$; we let $\omega_{\alpha , \mu}$ be the linear form which maps an element of $\mathfrak{p}$ to its $(\alpha, \mu)$-coordinate. For any multi-indices $\underline{\alpha } = (\alpha_1 , \alpha_2 )$ we write $\omega_{\underline{\alpha}} = \omega_{\alpha_1 , b+1} \wedge \omega_{\alpha_2 , b+2}$ and $z_{\underline{\alpha}} = z_{\alpha_1 } z_{\alpha_2}$. The form 
$$\sum_{\underline{\alpha}} z_{\underline{\alpha}} \otimes \omega_{\underline{\alpha}} \in \mathrm{Hom}_{K_\infty } (\wedge^2 \mathfrak{p} , \mathcal{P} (\mathbb{C}^{m})_+ )$$ 
is precisely the image $\iota (\varphi_{1,0})$ of the Schwartz form $\varphi_{1,0} \in \mathrm{Hom}_{K_\infty } (\wedge^{2} \mathfrak{p} , \mathcal{S} (V))$, constructed by Kudla and Millson, under the intertwining operator $\iota$. The natural product
$$\bigotimes_{j=1}^r \mathbb{C}[z_{1 , j} , \ldots , z_{b, j} ] \rightarrow \mathbb{C}[z_{1 , j} , \ldots , z_{b, j} \; : \; j=1, \ldots r ]$$
induces a natural map 
$$\mathrm{Hom}_{K_\infty } (\wedge^2 \mathfrak{p} , \mathcal{P} (\mathbb{C}^{m})_+ ) \otimes \cdots \otimes \mathrm{Hom}_{K_\infty } (\wedge^2 \mathfrak{p} , \mathcal{P} (\mathbb{C}^{m})_+ ) \to \mathrm{Hom}_{K_\infty } (\wedge^{2r} \mathfrak{p} , \mathcal{P} (\mathbb{C}^{mr})_+ )$$
which maps 
$$\otimes_{j=1}^r (\sum_{\underline{\alpha}} z_{\underline{\alpha} , j} \otimes \omega_{\underline{\alpha}} )$$
onto $\iota (\varphi_{r,0} )$ the image of the Kudla-Millson form $\varphi_{r,0} \in \mathrm{Hom}_{K_\infty } (\wedge^{2r} \mathfrak{p} , \mathcal{S} (V^r))$. We shall abusively denote it $\varphi_{r,0}$ as well. 

We now describe Funke-Millson forms. For any multi-indices $\underline{\alpha } = (\alpha_1 , \ldots , \alpha_\ell )$ we let 
$$v_{\underline{\alpha}} = v_{\alpha_1} \otimes \cdots \otimes v_{\alpha_\ell} \in T^\ell (V ).$$
Given any $\ell$-tuple $I= (i_1 , \ldots , i_{\ell})$ of integers in $[1,b]$ we define 
$$\varphi_{0,\ell}^{I} \in \mathrm{Hom} (\mathbb{C} , [\mathcal{P} (\mathbb{C}^{mr})_+ \otimes T^{\ell} (V) ]^{K_\infty } )$$
by 
$$\varphi_{0,\ell}^I = \sum_{\underline{\alpha}} (z_{\alpha_1 , i_1} \cdots z_{\alpha_\ell , i_\ell} ) \otimes v_{\underline{\alpha}} .$$
Using the natural product on $\mathcal{P} (\mathbb{C}^{mr})_+$,  we have 
$$\varphi_{r,\ell}^I = \varphi_{r,0} \cdot \varphi_{0,\ell}^I \in \mathrm{Hom}_{K_\infty } (\wedge^{2r} \mathfrak{p} , \mathcal{P} (\mathbb{C}^{mr})_+ \otimes T^{\ell} (V) ).$$
We shall denote $\Phi_{T^\ell (V)}$ the subspace of $\mathrm{Hom}_{K_\infty } (\wedge^{2r} \mathfrak{p} , \mathcal{P} (\mathbb{C}^{mr}) \otimes T^{\ell} (V) )$ spanned by the Schwarz forms $\varphi_{r,\ell}^I$.

Now fix a partition $\lambda = (\lambda_1 , \ldots , \lambda_k)$, with $k = \left[ \frac{m}{2} \right]$, and let $\ell = \lambda_1 + \ldots + \lambda_k$. The finite dimensional $\mathrm{O} (V)$-module $\mathbb{S}_{[\lambda]} (V)$ can be obtained as the image of the classical Schur functor $\mathbb{S}_{\lambda} (V) \subset T^{\ell} (V)$ under the $\mathrm{O}(V)$-equivariant projection of $T^\ell (V)$ onto the harmonic tensors. We denote by $\pi_{[\lambda ]}$ the 
corresponding $\mathrm{O}(V)$-equivariant projection of $T^\ell (V)$ onto $\mathbb{S}_{[\lambda]} (V)$. We set 
$$\varphi_{r , [\lambda]}^I = (1\otimes \pi_{[\lambda ]}) \circ \varphi_{r ,\ell}^I.$$
We shall denote $\Phi_{[\lambda]}^{2r}$ the subspace of $\mathrm{Hom}_{K_\infty } (\wedge^{2r} \mathfrak{p} , \mathcal{P} (\mathbb{C}^{mr}) \otimes T^{\ell} (V) )$ spanned by the Schwarz forms $\varphi_{r,[\lambda]}^I$.

More generally if $E$ is {\it any} finite dimensional representation, it decomposes into a finite direct sum (with multiplicities) of $\mathrm{O} (V)$-modules 
$\mathbb{S}_{[\lambda ]} (V)$ and we shall denote $\Phi_E^{2r}$ the finite direct sum of the corresponding $\Phi_{[\lambda]}^{2r}$'s identified as a subspace of $\mathrm{Hom}_{K_\infty } (\wedge^{2r} \mathfrak{p} , \mathcal{P} (\mathbb{C}^{mr}) \otimes E )$. We will in fact abusively identify $\Phi_E^{2r}$ with a subspace 
$$\Phi_E^{2r} \subset \left[ \mathcal{S} (V^r ) \otimes A^{2r} (D) \otimes E \right]^{\mathrm{O}(V)}$$
using the intertwining operator $\iota$.

\subsection{The Funke-Kudla-Millson ring}
Consider a finite graded ring of finite dimensional representations $\mathbf{H}^\ast$ of $\mathrm{O} (V)$. We use the above special Schwartz forms to construct a subring of the bi-graded complex 
\begin{equation} \label{ring}
\bigoplus_{i,j} \left[ \mathcal{S} (V^i ) \otimes A^{2i} (D) \otimes \mathbf{H}^j \right]^{\mathrm{O}(V)}. 
\end{equation}
Indeed: the wedge product
$$\wedge : A^{2i} (D,\mathbf{H}^j) \times A^{2i'} (D,\mathbf{H}^{j'}) \to A^{2(i+i')} (D, \mathbf{H}^{j} \otimes \mathbf{H}^{j'})$$
composed with the product $\mathbf{H}^{j} \otimes \mathbf{H}^{j'} \to \mathbf{H}^{j+j'}$ yields a ring structure on \eqref{ring} and since the latter obviously maps
$\Phi_{\mathbf{H}^j}^{2i} \times \Phi_{\mathbf{H}^{j'}}^{2i'}$ into $\Phi_{\mathbf{H}^{j+j'}}^{2(i+i')}$ we conclude:

\begin{proposition} \label{P:ring}
The subspace
$$\bigoplus_{i,j} \Phi_{\mathbf{H}^j}^{2i} \subset \bigoplus_{i,j} \left[ \mathcal{S} (V^i ) \otimes A^{2i} (D) \otimes \mathbf{H}^j \right]^{\mathrm{O}(V)}$$
is a subring.
\end{proposition}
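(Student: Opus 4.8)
The plan is to verify the defining inclusion
$\Phi_{\mathbf{H}^j}^{2i}\cdot\Phi_{\mathbf{H}^{j'}}^{2i'}\subseteq\Phi_{\mathbf{H}^{j+j'}}^{2(i+i')}$ on generators, working throughout in the polynomial Fock model so that the three ingredients of the product become completely explicit: the wedge on the form slot $A^{2i}(D)\otimes A^{2i'}(D)\to A^{2(i+i')}(D)$, the multiplication on the Schwartz slot $\mathcal{S}(V^i)\otimes\mathcal{S}(V^{i'})\to\mathcal{S}(V^{i+i'})$ which in the Fock model is simply the multiplication of polynomials in the disjoint variable sets $\{z_{\bullet,1},\ldots,z_{\bullet,i}\}$ and $\{z_{\bullet,i+1},\ldots,z_{\bullet,i+i'}\}$, and the coefficient multiplication $\mathbf{H}^j\otimes\mathbf{H}^{j'}\to\mathbf{H}^{j+j'}$. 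The heart of the matter is that the special forms multiply to special forms, and I would isolate this first for the ``universal'' tensor coefficients $T^\bullet(V)$ before descending to an arbitrary graded ring $\mathbf{H}^\ast$.

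First I would record the multiplicativity of the two building blocks. By its very construction $\varphi_{r,0}$ is the image of $\otimes_{j=1}^r\bigl(\sum_{\underline\alpha}z_{\underline\alpha,j}\otimes\omega_{\underline\alpha}\bigr)$, so concatenating variable sets gives $\varphi_{i,0}\cdot\varphi_{i',0}=\varphi_{i+i',0}$ up to the Koszul sign coming from the wedge on $\wedge^{2r}\mathfrak{p}$ (which is trivial here since the form degrees are even). Likewise, multiplying the Funke--Millson coefficient forms and using the multiplication $T^\ell(V)\otimes T^{\ell'}(V)\to T^{\ell+\ell'}(V)$ of the tensor algebra gives $\varphi_{0,\ell}^I\cdot\varphi_{0,\ell'}^{I'}=\varphi_{0,\ell+\ell'}^{I\,\sqcup\,(I'+i)}$, where $I\,\sqcup\,(I'+i)$ is the concatenation of $I$ with the index $I'$ whose symplectic labels have been shifted by $i$ into the second block of variables. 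Combining the two identities yields
\begin{equation*}
\varphi_{i,\ell}^I\cdot\varphi_{i',\ell'}^{I'}=\pm\,\varphi_{i+i',\,\ell+\ell'}^{\,I\,\sqcup\,(I'+i)},
\end{equation*}
which already proves that the subspace $\bigoplus_{r,\ell}\Phi_{T^\ell(V)}^{2r}$ spanned by the $\varphi_{r,\ell}^I$ is a subring.

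To pass to a general graded ring $\mathbf{H}^\ast$ I would use that, by definition, $\Phi_{\mathbf{H}^j}^{2i}$ is the image of the universal forms $\varphi_{i,\ell}^I$ under the $\mathrm{O}(V)$-equivariant coefficient maps $1\otimes\pi_{[\lambda]}$ attached to the decomposition of $\mathbf{H}^j$ into harmonic Schur modules $\mathbb{S}_{[\lambda]}(V)$. Since the form and Schwartz products act only on the first two slots, they commute with every operation on the coefficient slot; hence for generators $\varphi_{i,[\lambda]}^I=(1\otimes\pi_{[\lambda]})\varphi_{i,\ell}^I$ and $\varphi_{i',[\mu]}^{I'}=(1\otimes\pi_{[\mu]})\varphi_{i',\ell'}^{I'}$ their product in the ring \eqref{ring} equals
\begin{equation*}
\bigl(1\otimes\bigl(m\circ(\pi_{[\lambda]}\otimes\pi_{[\mu]})\bigr)\bigr)\,\varphi_{i+i',\,\ell+\ell'}^{\,I\,\sqcup\,(I'+i)},
\end{equation*}
where $m\colon\mathbf{H}^j\otimes\mathbf{H}^{j'}\to\mathbf{H}^{j+j'}$ is the ring multiplication. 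As $m\circ(\pi_{[\lambda]}\otimes\pi_{[\mu]})$ is an $\mathrm{O}(V)$-equivariant map with values in $\mathbf{H}^{j+j'}$, its composite with the universal form lands in $\Phi_{\mathbf{H}^{j+j'}}^{2(i+i')}$, which is exactly the assertion of Proposition~\ref{P:ring}.

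The step that needs genuine care, and where I expect the real work to lie, is the last one: one must know that applying an $\mathrm{O}(V)$-equivariant coefficient map to $\varphi_{i+i',\ell+\ell'}^{I''}$ again produces a linear combination of the defining generators $\varphi_{i+i',[\nu]}^J$. For the maps that only permute tensor factors this is immediate, since a permutation $\sigma$ merely relabels the multi-index, $\sigma\cdot\varphi_{r,\ell}^I=\varphi_{r,\ell}^{\sigma(I)}$, and the subsequent harmonic projection $\pi_{[\nu]}$ then returns a generator. The delicate contribution comes from the metric contractions present in the Clebsch--Gordan decomposition of $\mathbb{S}_{[\lambda]}(V)\otimes\mathbb{S}_{[\mu]}(V)$, which in the Fock model insert factors $\sum_{\alpha\le b}z_{\alpha,i_a}z_{\alpha,i_b}$ coupling the two variable blocks; controlling these --- equivalently, checking that the Funke--Millson forms are stable under the full coefficient functoriality and not merely under concatenation --- is precisely the compatibility of the harmonic Schur projections with the product that the construction of \cite{FM06} is designed to supply.
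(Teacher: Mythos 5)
Your route is the same as the paper's. The paper's entire proof consists of the observation that the ring structure on \eqref{ring} --- wedge product composed with $\mathbf{H}^{j}\otimes\mathbf{H}^{j'}\to\mathbf{H}^{j+j'}$ --- ``obviously'' maps $\Phi_{\mathbf{H}^j}^{2i}\times\Phi_{\mathbf{H}^{j'}}^{2i'}$ into $\Phi_{\mathbf{H}^{j+j'}}^{2(i+i')}$; your Fock-model concatenation identity $\varphi_{i,\ell}^I\cdot\varphi_{i',\ell'}^{I'}=\varphi_{i+i',\ell+\ell'}^{I\sqcup(I'+i)}$, followed by the reduction of general coefficients to $\mathrm{O}(V)$-equivariant coefficient maps, is precisely the computation hidden behind that word, and this part of your argument is correct (indeed more detailed than the source).

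However, the step you isolate at the end is a genuine gap, and citing \cite{FM06} does not close it: Funke and Millson construct the forms $\varphi_{r,[\lambda]}^I$ and relate their theta series to cycles with coefficients, but they prove no statement asserting that the finite-dimensional span $\Phi_{[\nu]}^{2r}$ is stable under equivariant coefficient maps involving metric contractions --- and with the definitions exactly as written it is not. Concretely, suppose the restriction of $m$ to a summand $V\otimes V\subset\mathbf{H}^{j}\otimes\mathbf{H}^{j'}$ has a nonzero component onto a trivial summand of $\mathbf{H}^{j+j'}$; such a component is forced to be a multiple of the contraction $x\otimes y\mapsto (x,y)$, and this situation does occur for the local systems the paper feeds into the construction, since the Beauville--Bogomolov component of the cup product $H^2\otimes H^2\to H^4$ of a hyperk\"ahler manifold is nonzero. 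The corresponding component of $\varphi_{1,1}^{(1)}\cdot\varphi_{1,1}^{(2)}$ is then
\begin{equation*}
\Bigl(\sum_{\alpha\leq b} z_{\alpha,1}z_{\alpha,2}\Bigr)\,\varphi_{2,0},
\end{equation*}
a nonconstant polynomial multiple of the Kudla--Millson form, which does not lie in the span $\mathbb{C}\,\varphi_{2,0}$ attached to the trivial summand, simply because the polynomial degrees differ. So the assertion needs a repair rather than a reference: the natural fix is to enlarge each $\Phi_{\mathbf{H}^j}^{2i}$ to the module over the invariant polynomials $q^{+}_{jk}=\sum_{\alpha\leq b}z_{\alpha,j}z_{\alpha,k}$ generated by the $\varphi_{i,[\lambda]}^I$. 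Your concatenation identity, together with the fact that equivariant maps $T^{\ell}(V)\to\mathbb{S}_{[\nu]}(V)$ are spanned by compositions of contractions, permutations and the harmonic projection, shows the enlarged spaces genuinely form a ring; and the later applications (Proposition \ref{FKMtheta}, Theorem \ref{Tcup}) are unharmed, since the $q^{+}$-factors correspond to Euler-class, i.e.\ Hodge-class, factors in the Kudla--Millson theory. Note finally that the paper's ``obviously'' glosses over exactly this same point: you identified the issue the source ignores, but you asserted its resolution rather than proving it.
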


Consider now a (global) arithmetic quotient $Y_K$ with $K \subset \mathrm{O} (V) (\mathbb{A}_f )$ an open compact subgroup. Let $\varphi \in \mathcal{S} (V (\mathbb{A}_f )^i )$ be a $K$-invariant Schwartz function for some $i$. Then for any $\varphi_\RR \in \Phi_{\mathbf{H}^j}^{2i}$, we define a global Schwartz form
$$\phi = \varphi_\RR \otimes \varphi \in \left[ \mathcal{S} (V (\mathbb{A} )^i ) \otimes A^{2i} (D) \otimes \mathbf{H}^j \right]^{\mathrm{O}(V)}.$$
Applying the theta distribution \eqref{thetafunction} to $\phi$ yields a theta function $\theta_{\psi , \phi} (g, g')$ which, as a function of $g \in \mathrm{O} (V)$, defines a differential form in $A^{2i} (X_K , \mathbf{H}^j )$. Varying $\varphi$ and $g'$ we obtain a subspace 
$$A_{\rm special}^{2i} (X_K , \mathbf{H}^j ) \subset A^{2i} (X_K , \mathbf{H}^j ).$$
It follows from Proposition \ref{P:ring} that
\begin{equation}
\Phi_{\mathbf{H}^\ast} := \bigoplus_{i,j} A_{\rm special}^{2i} (X_K , \mathbf{H}^j ) 
\end{equation}
is a subring of the graded ring $\bigoplus_{i,j} A^{2i} (X_K , \bH^j )$; we shall refer to it as the {\it Funke-Kudla-Millson ring} of $X_K$. Restricting to the connected component $Y_K \subset X_K$ yields the Funke-Kudla-Millson ring of $Y_K$.

\subsection{Relation with $\theta$-classes}	
By construction, the cohomology class of a differential form in $A_{\rm special}^{2i} (Y_K , \mathbf{H}^j )$ is a $\theta$-class in 
$$H_{\theta}^{i,i} (Y_K , \mathbf{H}^j ) := H_{\theta}^{2i} (Y_K , \mathbf{H}^j ) \cap H^{i,i} (Y_K , \mathbf{H}^j ).$$
It turns out that the converse is also true:

\begin{proposition}[see Theorem 11.2 of \cite{BMM16}] \label{FKMtheta}
Any class in $H_{\theta}^{i,i} (Y_K , \mathbf{H}^j )$ can be represented by a differential form in $A_{\rm special}^{2i} (Y_K , \mathbf{H}^j )$.
\end{proposition}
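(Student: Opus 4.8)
The plan is to represent every $\theta$-class as the de Rham class of an explicit theta integral assembled from a Funke--Millson Schwartz form. By the definition of $H^{i,i}_\theta(Y_K,\mathbf{H}^j)$ through the Matsushima decomposition \eqref{decomposition} followed by the comparison map $\xi_{2i}$ of \eqref{L2map}, it is enough to represent a class coming from a single isotypic summand $H^{2i}(\mathfrak{g},K_\RR;\pi_\RR^\infty\otimes E)\otimes\pi_f^K$, where $\pi=\pi_\RR\otimes\pi_f\in\cA(G)$ lies in the image of the cuspidal $\psi$-theta correspondence from a smaller symplectic group. Since the class has Hodge type $(i,i)$, the classification of cohomological representations (\S6.2 and Theorem~\ref{T:531}) forces $\pi_\RR^\infty\cong A_{i,i}(E)$, so the Archimedean contribution is detected by the single $K_\RR$-type $\tau_{i,i}$ inside $\wedge^{2i}\mathfrak{p}$.

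First I would use the surjectivity of the global theta lift proved in \cite{BLMM16}: by the definition of being in the image of the cuspidal theta correspondence there is a cuspidal $\tau\in\cA_{\rm cusp}(\Mp_{2i})$ whose global $\psi$-theta lift to $\mathrm{O}(V)$ contains an extension of $\pi$. Fix a vector $f$ in the space of $\tau$ and a $K$-invariant finite Schwartz function $\varphi\in\mathcal{S}(V(\AA_f)^i)$, and at the Archimedean place take the Funke--Millson form $\varphi_\RR=\varphi^I_{i,[\lambda]}\in\Phi^{2i}_{\mathbf{H}^j}$ effecting the projection onto $\mathbf{H}^j$. Feeding $\phi=\varphi_\RR\otimes\varphi$ into the theta integral \eqref{theta} and reading $\theta^f_{\psi,\phi}$ as a function of $g\in\mathrm{O}(V)$ yields, by the very construction of the Funke--Kudla--Millson ring, a differential form lying in $A^{2i}_{\rm special}(X_K,\mathbf{H}^j)$; restricting to the component $Y_K$ gives the required special form.

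The crux is to identify the cohomology class of this theta integral with the prescribed $\theta$-class. This rests on the local computation of Kudla--Millson and Funke--Millson: the Archimedean form $\varphi_\RR$, viewed as an element of $\mathrm{Hom}_{K_\RR}(\wedge^{2i}\mathfrak{p},\mathcal{S}(V^i)\otimes\mathbf{H}^j)$, is closed of Hodge type $(i,i)$ and represents the cohomological generator of the $(i,i)$-part of $H^\bullet(\mathfrak{g},K_\RR;A_{i,i}(E)\otimes E)$. Consequently $\theta^f_{\psi,\phi}$ is a square-integrable closed form whose $L^2$-class maps under $\xi_{2i}$ precisely to the $\pi$-component of \eqref{decomposition} attached to $f$ and $\varphi$. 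As $\pi$ ranges over all representations contributing to $\theta$-classes and $f,\varphi$ vary, these classes span $H^{i,i}_\theta(Y_K,\mathbf{H}^j)$, which gives the proposition.

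The main obstacle is the non-vanishing and fullness of this realization: one must know that $f\mapsto[\theta^f_{\psi,\phi}]$ does not kill the isotypic piece we are after, equivalently that the special theta lift is non-zero on the relevant vectors and that its image fills the whole $(i,i)$-theta class rather than a proper subspace. This is exactly the content of \cite[Theorem~11.2]{BMM16}; its proof combines the explicit identification of $\varphi_\RR$ with the cohomological generator at the Archimedean place with the Rallis inner product formula, which expresses the Petersson norm of the lift in terms of a special value of the standard $L$-function of $\tau$ and local zeta integrals, thereby controlling when the lift is non-zero.
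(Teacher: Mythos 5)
The paper itself gives no argument for this proposition: it is imported verbatim as Theorem 11.2 of \cite{BMM16}, and the proof lives entirely in that reference. Since your write-up also ultimately defers the crux (non-vanishing and fullness of the theta-lift realization, via the Rallis inner product formula) to the same \cite[Theorem 11.2]{BMM16}, at the level of logical dependence you and the paper agree, and your description of the underlying mechanism --- Funke--Millson Schwartz forms $\varphi^I_{i,[\lambda]}$, the theta integral \eqref{theta}, and the identification of the Archimedean form with a cohomological generator --- is the correct picture of how that theorem is proved.

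However, your own reduction step is wrong as stated, and it would matter if the sketch were meant to stand on its own. Hodge type $(i,i)$ does \emph{not} force $\pi_\RR^\infty \cong A_{i,i}(E)$: by the computation of relative Lie algebra cohomology recalled in Section 6, one has $H^{i,i}(\frg, K_\RR; A_{r,r}(E)\otimes E) \neq 0$ for every $r$ with $r \leq i \leq b-r$, so the space $H^{i,i}_\theta(Y_K,\mathbf{H}^j)$, defined as the image under \eqref{decomposition} and \eqref{L2map} of \emph{all} representations in the image of the cuspidal theta correspondence, also receives Lefschetz-type contributions from $\pi$ with $\pi_\RR^\infty \cong A_{r,r}(E)$, $r < i$ (morally, powers of the K\"ahler class times lower-degree theta classes). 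Your construction, which lifts from $\Mp_{2i}$ and uses the degree-$2i$ Schwartz form, only produces the $r=i$ summands; covering $r<i$ needs the further Kudla--Millson fact that the K\"ahler/Euler form is itself the value at the zero vector of the special Schwartz form $\varphi_{1,0}$, so that wedging with it stays inside the special forms --- this is part of what \cite[Theorem 11.2]{BMM16} actually handles. A second, more minor point: the theta integral $\theta^f_{\psi,\phi}$ is an integral over $g'$, not a finite linear combination of the forms $\theta_{\psi,\phi}(\cdot,g')$, so it need not literally lie in $A^{2i}_{\rm special}(Y_K,\mathbf{H}^j)$; the correct statement is that its cohomology class lies in the span of the classes $[\theta_{\psi,\phi}(\cdot,g')]$ (using finite-dimensionality of cohomology), and hence is represented by some finite combination, which is an element of $A^{2i}_{\rm special}(Y_K,\mathbf{H}^j)$ as the proposition requires.
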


\section{Universal family of polarized hyperk\"ahler manifolds }

In this section, we study the cohomology groups of universal family of polarized hyperk\"ahler manifolds via Deligne's decomposition theorem and prove the cohomological Franchetta conjecture. Furthermore,  we prove the cohomological tautological conjecture using the FKM ring. Throughout this section, we concern with hyperk\"ahler manifolds with type $\Lambda$ and $\rank ~\Lambda=3+b$. 

\subsection{Cohomology of universal families of hyperk\"ahler manifolds}\label{cohfran}
Let $\pi:\cU\rightarrow \cF$ be a smooth family of  polarized hyperk\"ahler manifolds over a variety $\cF$.  Let $\HH_\pi^i $ denote the local system on $\cF$ whose fiber at a point $p\in \cF$ is $H^i(\cU_p,\QQ)$, with $\cU_p=\pi^{-1}(p)$. 
According to the decomposition theorem of Deligne \cite{De68}, we have
\begin{equation}\label{Dec}
H^k(\cU,\QQ)\cong \bigoplus\limits_{i+j=k} H^i(\cF; \HH^j_\pi)
\end{equation}
compatible with the mixed Hodge structure on both sides (cf.~\cite[$\S$4.3]{deM12}). 

For our purpose, we shall consider the universal family (as stacks) $$\pi_\ell:\cU^\ell_h\rightarrow\cF^\ell_h$$ of polarized hyperk\"ahler manifolds of type $\Lambda$ and  dimension $2n$ with a full $\ell$-level structure defined in \S\ref{Slevel}.  The decomposition theorem still applies in this case.  Our first result is a cohomological version of O'Grady's generalized Franchetta conjecture for hyperk\"ahler manifolds.  Here we work with stack cohomology. 

\begin{theorem}\label{mainthm2} 
Let $m_\Lambda$ be the second Betti number of hyperk\"ahler manifolds in $\cF_h^\ell$.
Assume that $n < \frac{m_\Lambda-3}{8} $.  Then, for any class $\alpha\in \CH^{2n}(\cU_{h}^\ell ,\QQ)$, there exists $m\in \QQ$ such that $[\alpha-mc_{2n}(\cT_{\pi_\ell})] \in H^{4n} (\cU^\ell_{h},\QQ)$ supports on the Noether-Lefschetz locus of $\cF^\ell_{h}$. In particular, Theorem \ref{main3} holds.
\end{theorem}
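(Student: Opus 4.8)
The plan is to compute the class $[\alpha]\in H^{4n}(\cU_h^\ell,\QQ)$ through the Deligne decomposition \eqref{Dec} and to match its pieces against the representation-theoretic and automorphic input of Sections 5--7. Write $b=m_\Lambda-3$, so that the Shimura variety $\Gamma_h^\ell\backslash D$ has dimension $b$ and the hypothesis $n<(m_\Lambda-3)/8$ reads $4n<b/2$. Since $[\alpha]$ is in the image of the cycle class map it is a Hodge class of type $(2n,2n)$, and by \eqref{Dec} we may write $[\alpha]=\sum_{i+j=4n}\alpha_{i,j}$ with $\alpha_{i,j}\in H^i(\cF_h^\ell;\HH^j_{\pi_\ell})$, so that we only need to control the Hodge $(2n,2n)$-part of each summand. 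First I would decompose every local system $\HH^j_{\pi_\ell}$, via the $\SO(\Lambda_h)$-action on the fiber cohomology described in \S2.4, into a trivial part and a sum of nontrivial irreducible local systems $\bE=\SS_{[\lambda]}(V)$. The whole argument then rests on treating these pieces separately, the key point being that $4n<b/2$ forces every base-degree $i\le 4n$ to lie well below the middle dimension.

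In that range the cohomology is as simple as possible. By Proposition \ref{oddvanish} all summands with $i$ odd vanish, $H^i(\cF_h^\ell;\bE)=0$, regardless of the coefficient system; hence only even base degrees $i=2r$ (and therefore even $j$) survive, and in particular the odd fiber cohomology that is present for, say, generalized Kummer type never contributes, since it sits in odd $j$ and forces odd $i$. Moreover Corollary \ref{HS} gives $H^i\cong H^i_{(2)}$ as Hodge structures for $i<b-1$, so there are no weight subtleties: every $H^{2r}(\cF_h^\ell;\bE)$ is pure. The unique summand with $i=0$ is $H^0(\cF_h^\ell;\HH^{4n}_{\pi_\ell})=H^{4n}(X,\QQ)=\QQ$, the fiberwise point class, which is of type $(2n,2n)$ and carries the trivial $\SO(\Lambda_h)$-action. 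Because $\int_X c_{2n}(T_X)=\chi(X)\ne 0$, the class $c_{2n}(\cT_{\pi_\ell})$ projects nontrivially onto this line, and I would choose the rational number $m$ precisely so as to annihilate the $H^0(\cF_h^\ell;\HH^{4n}_{\pi_\ell})$-component of $[\alpha]$.

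It remains to show that every other surviving component of $[\alpha]-m\,c_{2n}(\cT_{\pi_\ell})$ is supported on a Noether--Lefschetz locus. For this I would invoke the core surjectivity results. In the range $r=i/2\le 2n<(b+1)/3$, Theorem \ref{T:531} (together with Corollary \ref{surjmoduli} on the moduli side) shows that the $(r,r)$-Hodge part of $H^{2r}(\Gamma_h^\ell\backslash D,\bE)$ consists entirely of theta classes and that these lie in $\mathrm{SC}^r_{\rm hom}(\Gamma_h^\ell\backslash D,\bE)$, i.e.\ are $\QQ$-combinations of special cycle classes $[c(\bx,g,K)_T]$. Pulling back along the open immersion $\cP_\ell$ and using Proposition \ref{P423} identifies each such class, with trivial or nontrivial coefficients alike, with a class supported on the image of a forgetful map $\cF_{\Sigma,h}^\ell\to\cF_h^\ell$, that is, on a Noether--Lefschetz locus. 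Thus every surviving $\alpha_{i,j}$ with $i>0$ is a finite sum of cycle classes each Poincar\'e dual to a cycle supported over some $\cF_{\Sigma,h}^\ell$, and a cycle class restricts to zero on the complement of its support. Choosing $W\subseteq\cF_h^\ell$ to be the complement of the finitely many Noether--Lefschetz loci that occur, the difference $[\alpha]-m\,c_{2n}(\cT_{\pi_\ell})$ restricts to zero on $\pi_\ell^{-1}(W)$, which is the assertion; the case $n=1$, $b=19$ is Theorem \ref{main3}.

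The step I expect to require the most care is verifying that a special cycle \emph{with nontrivial coefficients} $[c(\bx,g,K)_T]$, once transported through the Deligne decomposition \eqref{Dec} and the open immersion $\cP_\ell$, is genuinely represented by a cycle geometrically supported over the corresponding Noether--Lefschetz locus $\cF_{\Sigma,h}^\ell$, so that it truly restricts to zero on $\pi_\ell^{-1}(W)$; here I would use the Funke--Kudla--Millson description of theta classes (Proposition \ref{FKMtheta}) together with the cycle-class construction of \S5.4. A second delicate point is purely Hodge-theoretic: identifying the $(2n,2n)$-part of each $\alpha_{i,j}$ with the $(r,r)$-theta subspace of $H^{2r}(\cF_h^\ell;\bE)$, for which one matches the total Hodge type against the geometric degree $i=2r$ through the Hodge filtration of the coefficient variation. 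I emphasize that the sharp hypothesis $4n<b/2$ (equivalently $n<(m_\Lambda-3)/8$), rather than the weaker $4n<b$, is exactly what guarantees that Proposition \ref{oddvanish} annihilates \emph{all} odd-base-degree components up to degree $4n$ and that Corollary \ref{HS} keeps every relevant $H^i(\cF_h^\ell;\bE)$ pure; the automorphic identification of Theorem \ref{T:531} is then automatic, since $r\le 2n<(b+1)/3$ for every coefficient system.
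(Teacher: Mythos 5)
Your overall skeleton coincides with the paper's: decompose $[\alpha]$ by Deligne's theorem \eqref{Dec}, discard the odd-degree pieces, normalize away the $H^0(\cF_h^\ell,\HH^{4n}_{\pi_\ell})\cong\QQ$ component by the choice of $m$ (your observation that this is possible because the fiberwise integral of $c_{2n}$ is $\chi(X)\neq 0$ is a correct elaboration of what the paper leaves implicit), and then place the remaining components in $\mathrm{SC}^\ast_{\rm hom}$ via Theorem \ref{T:531}. But there is a genuine gap exactly at the point you wave away: you assert that ``there are no weight subtleties'' because Proposition \ref{oddvanish} gives $H^i(\cF_h^\ell;\bE)=0$ for odd $i<b/2$ and Corollary \ref{HS} makes every $H^{2r}(\cF_h^\ell;\bE)$ pure. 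Both of those statements are about the Shimura variety $Y_K=\Gamma_h^\ell\backslash D$, not about $\cF_h^\ell$. The period map \eqref{level-period} is only an \emph{open immersion}, with image a proper open subset in general, so $H^\ast(\cF_h^\ell,\HH^j_{\pi_\ell})$ is the cohomology of an open variety: it carries a genuinely mixed Hodge structure, need not be pure, and need not vanish in odd degrees. As written, these two steps fail, and with them your identification of each $\alpha_{i,j}$ with a theta class: Theorem \ref{T:531} and Corollary \ref{surjmoduli} only control classes pulled back from $\Gamma_h^\ell\backslash D$, and you have not shown that your components lie in that image.

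The missing idea --- which is the actual content of the paper's proof --- is a weight-theoretic lifting. Since $[\alpha-mc_{2n}(\cT_{\pi_\ell})]$ is algebraic, it lies in the lowest weight subspace of $H^{4n}(\cU_h^\ell,\QQ)$; since \eqref{Dec} is compatible with mixed Hodge structures, each component $\gamma_i$ lies in the lowest weight subspace of $H^{2i}(\cF_h^\ell,\HH^{2(2n-i)}_{\pi_\ell})$. By Lemma \ref{Descend} the local systems descend to automorphic local systems $\bH^\bullet$ on $\Gamma_h^\ell\backslash D$, and by Theorem \ref{TL2map} (Harris--Zucker) the pullback
$$(\cP_h^\ell)^\ast: H^{2i}(\Gamma_h^\ell\backslash D,\bH^{4n-2i})\longrightarrow H^{2i}(\cF_h^\ell,\HH^{4n-2i}_{\pi_\ell})$$
is a morphism of mixed Hodge structures whose image is precisely the nonzero lowest weight part. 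Only after lifting each $\gamma_i$ to a class $\tilde\gamma_i$ on the Shimura variety may one invoke Proposition \ref{oddvanish} (which kills the odd-degree contributions, and even then only their algebraic parts) and Theorem \ref{T:531} (which places $\tilde\gamma_i$ in $\mathrm{SC}^i_{\rm hom}(\Gamma_h^\ell\backslash D,\bH^{2(2n-i)})$, whose elements are manifestly supported on proper Shimura subvarieties, hence restrict to Noether--Lefschetz loci under Proposition \ref{P423}). So your route is the right one, but the purity and vanishing claims on $\cF_h^\ell$ must be replaced by this lowest-weight lifting; your ``second delicate point'' gestures at Hodge-type bookkeeping, whereas the real delicacy is that the moduli space is strictly smaller than the period domain quotient.
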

\begin{proof} Obvisouly, we can assume  $\ell$ is sufficiently large.  The local system $\HH^i_{\pi_\ell}=R^i({\pi_\ell})_\ast \QQ$ on $\cF_h^\ell$ descends to its coarse moduli space $F_h^\ell$ and we write it  by $\HH^i_{\pi_\ell}$ by abuse  of notations.   There is a natural isomorphism 
\begin{equation}\label{stackcoh}
H^k(\cF^\ell_h,  \HH^i_{\pi_\ell})\cong H^k(F^\ell_h, \HH^i_{\pi_\ell})
\end{equation}	
and we may regard $F_h^\ell$ as an open subset of $\Gamma_h^\ell\backslash D$.

Next, under our hypothesis, Proposition \ref{oddvanish} implies that, in the degrees $i \leq 4n$ we are concerned with, only {\it even} degree contribute non-trivially to the algebraic part of the decomposition \eqref{Dec}. Since  moreover $H^0(\cF_{h}^\ell,\HH^{4n}_{\pi})\cong \QQ$, it follows that there exists $m\in\QQ$ such that 
\begin{equation}\label{fran}
[\alpha-mc_{2n} (\cT_{\pi_\ell})]\in \bigoplus_{i=1}^{2n} H^{2i}(\cF_{h}^\ell, \HH^{2(2n-i)}_{\pi_\ell}).
\end{equation}
Denote by $\gamma_i\in H^{2i}(\cF_{h}^\ell, \HH^{2(2n-i)}_{\pi_\ell} )$ the $i$-th component of \eqref{fran}. Since  $[\alpha-mc_{2n}(\cT_\pi)]\in H^{4n}(\cU_{h}^\ell , \QQ)$ is algebraic,  it lies in the lowest  weight subspace of $H^{4n}(\cU_{h}^\ell,\QQ )$.  As \eqref{Dec} is compatible with the mixed Hodge structure on both sides, each $\gamma_i$ lies in the lowest weight subspace of   $H^{2i}(\cF_{h}^\ell, \HH^{2(2n-i)}_{\pi_\ell})$. 

Recall that Lemma \ref{Descend}  implies that there exists an automorphic local system $$\bH^\bullet=\bigoplus \bH^j$$ on $\Gamma_{h}^\ell \backslash D$ such that  $\HH^j_{\pi_\ell}=R^j ({\pi_\ell})_\ast \QQ$ on $\cF_{h}^\ell$ is the pullback $(\mathcal{P}^\ell_h)^\ast \bH^j$.   So we have a natural map  
$$(\mathcal{P}^\ell_h )^* : H^{2i}(\Gamma_{h}^\ell \backslash D , \bH^{4n-2i}) \to H^{2i}(F_{h}^\ell, \HH^{4n-2i}_{\pi_\ell }) \cong H^{2i}(\cF^\ell_h,\HH^{4n-2i}_{\pi_\ell })$$ 
induced by $\cP^\ell_h$, which is a mixed Hodge structure morphism,  and it is surjective onto the non-zero lowest weight part. We can therefore lift $\gamma_i$ to an element 
$$\tilde{\gamma}_i\in  H^{2i}(\Gamma_h^\ell \backslash D , \bH^{4n-2i}), $$ 
with lowest weight. Now it suffices to show 
\begin{equation}\label{dcomponent}
\tilde{\gamma}_i\in \mathrm{SC}^i_{\rm hom} (\Gamma_h^\ell \backslash D, \bH^{2(2n-i)})
\end{equation}
since elements in $\mathrm{SC}^i_{\rm hom} (\Gamma_h^\ell \backslash D, \bH^{2(2n-i)})$ obviously support on proper Shimura subvarieties.

However, as the local system $\bH^\ast $ arises from a finite dimensional representation of $\mathrm{O}(V)$,  Theorem \ref{T:531} can be applied and this directly implies \eqref{dcomponent}.  
In particular, the conditions in Theorem \ref{mainthm2} hold for K3$^{[n]}$-type hyperk\"ahler manifolds with  $n\leq 2$ and hence Theorem \ref{main3} holds.
\end{proof}

\subsection{Leray spectral sequence and cup product}\label{S8.2}
There are natural cup products on both sides of \eqref{Dec}. In general these coincide only on the associated graded rings, see e.g. \cite{BottTu}. However Voisin \cite{Vo12} has proved that these two cup products are 
compatible for K3 surfaces after shrinking to a smaller open subset. In what follows we partially extend her results to families of hyperk\"ahler manifolds. 

Let us first make some conventions. In this subsection, we will deal with the universal family of lattice polarized hyperk\"ahler manifolds with level structures (See $\S$3.3 for all notions). For simplicity of the notations,  we will use $\pi:\cU \rightarrow \cF$ to denote  the universal family $\cU_{\Sigma,h}^\ell\rightarrow\cF_{\Sigma,h}^\ell$ of $h$-ample $\Sigma$-polarized hyperk\"ahler manifolds with a full $\ell$-level structure and let $Y= \Gamma_{\Sigma}^\ell\backslash D_\Sigma$. We denote by 
$$\cP:\cF\rightarrow Y$$ 
the period map.  Moreover, we choose $\ell$  sufficiently large so that $Y$ is smooth and  Lemma \ref{Descend} applies.

Consider on $\cU$ the short exact sequence of vector bundles 
$$0 \to \pi^* \Omega_{\cF} \to \Omega_{\cU} \to \Omega_{\cU / \cF} \to 0$$
that defines the fiber bundle of relative differential forms $\Omega_{\cU / \cF}$. This gives a decreasing filtration $L^* \Omega^q_{\cU }$ of the fiber bundle $\Omega^q_{\cU}$ defined by 
$$L^p \Omega^q_{\cU} = \pi^* \Omega^p_{\cF } \wedge \Omega^{q-p}_{\cU }.$$
The associated graded vector bundle is $\mathrm{Gr}^p_L \Omega^q_{\cU} =  \pi^* \Omega^p_{\cF } \otimes \Omega^{q-p}_{\cU}$. This yields a filtration on the complex $A^q (\cU )$ which is the space of smooth sections of $\Omega^q_{\cU}$. 
We therefore have a corresponding (Leray) spectral sequence that computes the cohomology of $\cU$ from that of the base $\cF$ and the fiber. The $E_1$ term is 
$$E_1^{p,q} = A^p (\cF, R^q\pi_\ast \RR ).$$

Note that the filtration is compatible with the cup-product meaning that if $\alpha \in L^{p_1} A^{k_1} (\cU)$ and $\beta \in L^{p_2} A^{k_2} (\cU)$ then $\alpha \wedge \beta \in L^{p_1 + p_2} A^{k_1 + k_2} (\cU)$. The cup-pruduct on $ A^\ast (\cU)$ induces the natural structure on $E_1$. 
Recall  that $R\pi_\ast \RR$ is the pullback  of a local system $\bH^\ast$ on $Y$ that comes from a finite dimensional representation of $\mathrm{O} (\Sigma^\perp(\RR))$. The 
pullback $\cP^* \Phi_{\bH^\ast }$ of the FKM ring on $Y$ associated to $\Phi_{\bH^\ast}$ defines a subring of $E_1$; we will abusively refer to it as the FKM subring of the $E_1$ term of the Leray spectral sequence that computes the cohomology of $\cU$ from that of the base $\cF$ and the fiber.  

\begin{theorem} \label{Tcup}
With the notations as above, for any two cycles $\alpha_1,\alpha_2\in \CH^\ast(\cU)$ of codimension $< \frac{m_\Lambda-3}{4} $, the two cup-products of $[\alpha_1]$ and $[\alpha_2]$ in $H^\ast(\cU,\CC)$ associated to the two sides of \eqref{Dec} differ by a class supported on the Noether-Lefschetz locus of $\cF$.
\end{theorem}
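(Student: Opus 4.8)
The plan is to compare the two ring structures through the Leray filtration $L^\bullet$ and to show that their discrepancy is absorbed by the Funke--Kudla--Millson ring. First I would use that, as recorded in Section 8.2, the filtration $L^\bullet A^\ast(\cU)$ is compatible with the cup product and that the decomposition \eqref{Dec} comes from the degeneration of the associated spectral sequence; consequently the two products agree on the associated graded $\mathrm{gr}_L^\bullet$. Writing $[\alpha_i]=\sum_q\gamma_i^{q}$ with $\gamma_i^{q}\in H^\ast(\cF,\HH^q_{\pi})$, the ``naive'' product $\cdot_{E_1}$ reproduces the leading terms in $H^\ast(\cF,\HH^{q_1+q_2}_{\pi})$, so the difference $[\alpha_1]\cup[\alpha_2]-[\alpha_1]\cdot_{E_1}[\alpha_2]$ lies in $L^{>\,p_1+p_2}$, i.e.\ every correction occurs in strictly higher base degree and strictly lower fibre degree. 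By Proposition \ref{oddvanish} only even base degrees contribute in the range we consider, so it suffices to prove that each (even) correction class supports on the Noether--Lefschetz locus of $\cF$.

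Next I would pass to the automorphic model. By Lemma \ref{Descend} the local system $\HH^\ast_{\pi}$ is the pullback $\cP^\ast\bH^\ast$ of an automorphic local system attached to a finite dimensional $\mathrm{O}(\Sigma^\perp)$-representation, so every $H^{2r}(\cF,\HH^{q}_{\pi})$ receives its lowest weight from $H^{2r}(Y,\bH^{q})$. Because $[\alpha_i]$ is algebraic each $\gamma_i^{q}$ is of lowest weight, and the hypothesis $\mathrm{codim}\,\alpha_i<\tfrac{m_\Lambda-3}{4}$ keeps the relevant base degree within the range where Theorem \ref{T:531} gives $H^{2r}_\theta=H^{r,r}$; hence $\gamma_i^{q}$ is a $\theta$-class and, by Proposition \ref{FKMtheta}, is represented by a special Schwartz form $\phi_i^{q}\in A^{2r}_{\mathrm{special}}$ pulled back from $Y$. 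The naive product of these representatives remains in the FKM ring by Proposition \ref{P:ring}, hence is again a $\theta$-class; by the inclusion $H^\ast_\theta\subseteq\mathrm{SC}^\ast_{\mathrm{hom}}$ of Theorem \ref{T:531} it lies in $\mathrm{SC}^\ast_{\mathrm{hom}}(Y,\bH^\ast)$, and exactly as in the proof of Theorem \ref{mainthm2} (via Proposition \ref{P423}) such classes support on the Noether--Lefschetz locus of $\cF$.

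The heart of the matter is to compute the honest product on $\cU$ through the same special forms. Lifting $\phi_i^{q}$ to $\cU$ by choosing a fibrewise harmonic representative of its coefficient and taking the honest wedge $\widetilde\phi_1\wedge\widetilde\phi_2$, the result splits at the level of $E_1$ as the FKM product in $H^\ast(\cF,\HH^{q_1+q_2}_{\pi})$ plus a correction reflecting that the wedge of two fibrewise harmonic forms is not harmonic. I would show that this correction is again represented by a special form: the multiplication of Funke--Millson forms is governed by the $\mathrm{O}(V)$-module structure of $\bH^\ast$ (the map $T^{\ell}\otimes T^{\ell'}\to T^{\ell+\ell'}$ followed by harmonic projection), so that the entire product $[\alpha_1\cdot\alpha_2]$, assembled from special Schwartz forms, stays inside the FKM ring. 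Each correction class is then a $\theta$-class, lands in $\mathrm{SC}^\ast_{\mathrm{hom}}(Y,\bH^\ast)$ by Theorem \ref{T:531}, and supports on Noether--Lefschetz cycles by Proposition \ref{P423}. Crucially this invokes FKM-representability rather than the identity $H^{r,r}_\theta=H^{r,r}$, so the product --- whose codimension $c_1+c_2$ may push the base degree past the clean range --- still yields $\theta$-classes precisely because it is built from products of special forms.

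The step I expect to be the main obstacle is the last one: verifying that the honest wedge on the total space $\cU$ of two $\cU$-lifts of special forms is cohomologous to a special form, i.e.\ that the failure of multiplicativity of the Hodge-theoretic (fibrewise harmonic) splitting is absorbed by the FKM ring. This is the generalization, for arbitrary polarized hyperk\"ahler families, of Voisin's compatibility result for K3 surfaces \cite{Vo12}; it requires matching the cup product of the total space with the Funke--Millson tensor calculus on the coefficients and checking that the non-harmonic corrections do not leave the span of the $\varphi_{r,[\lambda]}^I$. Once this multiplicativity-up-to-$\NL$ is established, assembling the leading and correction terms and applying Proposition \ref{P423} finishes the proof; as a consequence one recovers Theorem \ref{main3} and the cohomological tautological statements in the stated range, as explained in Section 8.1.
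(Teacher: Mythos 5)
Your first two paragraphs track the paper's own proof step for step: decompose $[\alpha_\ell]$ via \eqref{Dec}, use the argument of Theorem \ref{mainthm2} (this is where the hypothesis $\mathrm{codim}<\frac{m_\Lambda-3}{4}$ enters, through Theorem \ref{T:531}) to lift each component $\alpha_\ell^{(i)}$ to a $\theta$-class in $H^{2i}_\theta(Y,\bH^{2(k_\ell-i)})$, represent it by a special form via Proposition \ref{FKMtheta}, use closure of the FKM ring under products (Proposition \ref{P:ring}), and conclude NL-support from Corollary \ref{surjmoduli} and Proposition \ref{P423}. You also isolate correctly the structural point that the product is controlled by FKM-representability rather than by the range-restricted identity $H^{2r}_\theta=H^{r,r}$, which is exactly why the argument survives even though $k_1+k_2$ may leave the clean range. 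Up to there, your proposal and the paper agree.

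The divergence --- and the gap --- is in your last two paragraphs. You propose to compute the honest cup product by lifting the special forms to $\cU$ through fibrewise harmonic representatives and then to show that the non-harmonic corrections in $\widetilde{\phi}_1\wedge\widetilde{\phi}_2$ stay in the span of the $\varphi^I_{r,[\lambda]}$; you then declare this the main obstacle and leave it unproven, so as submitted your proposal is not a complete proof. The paper never performs such a lift. Its proof stays entirely inside the formalism of \S\ref{S8.2}: the Leray filtration $L^\bullet A^\ast(\cU)$ is multiplicative, the wedge product on $A^\ast(\cU)$ is by definition what induces the product on the $E_1$-term, the FKM ring $\cP^\ast\Phi_{\bH^\ast}$ is a subring of that $E_1$-term, and Deligne degeneration identifies the graded pieces with the summands of \eqref{Dec}; having represented $\alpha_1,\alpha_2$ by FKM elements of $E_1$, the paper concludes in one line that $\alpha_1\wedge\alpha_2$ lies in the FKM ring, so that $\alpha_1\wedge\alpha_2-\alpha_1^{(0)}\wedge\alpha_2^{(0)}$ is a sum of positive-degree $\theta$-classes, hence NL-supported. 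So the analytic problem you pose (multiplicativity of the fibrewise-harmonic splitting up to special forms) is not a step of the paper's argument at all; to match the paper you should discard the harmonic-lift construction and argue at the level of $E_1$. That said, your instinct is not baseless: the paper's one-line claim that the honest product ``belongs to the FKM ring'' is precisely where the discrepancy between the $E_1$-product and the cup product transported through the a priori non-multiplicative splitting \eqref{Dec} gets absorbed, and the paper offers no more justification than the subring property; you have in effect made explicit, but not resolved, the verification the paper leaves implicit.
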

\begin{proof} Let $2k_\ell$ be the degree of $\alpha_\ell$ ($\ell=1,2$). Denote by $\alpha_\ell^{(i)}$ the $2i$-th component of $\alpha_\ell$ in the decomposition \eqref{Dec}. Since $k_\ell< \frac{m_\Lambda-3}{4} $, the argument in the proof of Theorem \ref{mainthm2} implies that each $\alpha_\ell^{(i)}$ can be lifted as a $\theta$-class in $H_\theta^{2i} (Y, \bH^{2(k_\ell -i)})$. Now Proposition \ref{FKMtheta} implies that such a $\theta$-class can be represented by a differential form in the FKM ring. It follows that both $\alpha_1$ and $\alpha_2$ can be represented by elements of the FKM subring of the $E_1$ term of the Leray spectral sequence that computes the cohomology of $\cU$ from that of the base $\cF$ and the fiber. Now being a subring FKM is stable by cup-products. It follows that $\alpha_1 \wedge \alpha_2$ belongs to the FKM ring and that
$$\alpha_1 \wedge \alpha_2 - \alpha_1^{(0)} \wedge \alpha_2^{(0)} \in \bigoplus_{i=1}^{k_1+k_2} H^{i,i}_\theta (Y, \bH^{2(k_1+k_2-i)})$$
is supported on the Noether-Lefschetz locus of $\cF$ by Corollary \ref{surjmoduli}. 
\end{proof}

\subsection{Towards the Cohomological Tautological Conjecture} 
Let $\cU\rightarrow \cF$ be a smooth connected family of projective hyperk\"ahler manifolds. We denote by $R^\ast_\pi(\cF)\subseteq \CH^\ast(\cF)$ the subring generated by all $\kappa$-classes.   An easy fact is that inclusions
\begin{equation}
\NL^\ast_\pi(\cF) \subseteq R^\ast_\pi (\cF) 
\end{equation}
are preserved under the pullback by morphisms preversing the relative Picard group (modulo $\rm{DCH}^\ast(\cF)$).  Here,  a morphism between two families $\pi: \cU\rightarrow \cF$ and $\pi':\cU'\rightarrow \cF'$ preserving the relative Picard group is a commutative diagram
\begin{equation}
\xymatrix{ \cU' \ar[r]^{\bar{f}}\ar[d]^{\pi'} &\ \cU\ar[d]^{\pi}\\ \cF' \ar[r]^f & \cF  } 
\end{equation}
 with $\bar{f}^\ast \Pic(\cU/\cF)\cong \Pic(\cU'/\cF')$. This is because $f^\ast(\NL_\pi^\ast( \cF))\subseteq \NL^\ast_{\pi'}(\cF')$ and $f^\ast R_\pi^\ast(\cF)=R_{\pi'}^\ast( \cF')$ modulo $\rm{DCH}^\ast(\cF')$.  

Using this fact, we can work on $\kappa$-classes on a general family of hyperk\"ahler manifolds and Theorem \ref{main2} will be deduced from the following result as a direct consequence.

\begin{theorem}\label{tauthm}
Let $\pi:\cU\rightarrow \cF$ be a smooth family of  $h$-polarized hyperk\"ahler manifolds of type $\Lambda$ over an irreducible quasi-projective variety  $\cF$ of dimension $b \geq 3$.  Let $r+1$ be Picard number of the generic fiber of $\pi$ and  let 
$$\cB = \{ \cL_0 , \ldots , \cL_r \} \subset \mathrm{Pic}_\QQ (\cU)$$ 
be a collection of line bundles whose images in $\mathrm{Pic}_\QQ  (\cU / \cF)$ form a basis. If $b_j=0$ for $j\geq \frac14 (m_\Lambda-3)$, the $\kappa$-class $[\kappa_{a_0,\ldots, a_r,b_1,\ldots b_{2n} }^\cB]\in H^{\ast}(\cF,\QQ)$ is lying in $\NL^\ast_{\rm hom}(\cF)$. 
\end{theorem}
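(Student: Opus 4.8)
The plan is to realize the class in question as a fibre integral and to prove that the integrand lies in the Funke--Kudla--Millson subring of the $E_1$-term of the Leray spectral sequence of $\pi$ constructed in \S\ref{S8.2}. Write $\xi=\prod_{i=0}^r c_1(\cL_i)^{a_i}\prod_{j=1}^{2n}c_j(T_\pi)^{b_j}\in H^{2d}(\cU,\QQ)$ with $d=\sum_i a_i+\sum_j jb_j$, so that $[\kappa^\cB_{a_0,\ldots,a_r;b_1,\ldots,b_{2n}}]=\pi_\ast[\xi]\in H^{2(d-2n)}(\cF,\QQ)$. Exactly as in the proof of Theorem \ref{mainthm2}, I would first pass to a sufficiently large full $\ell$-level cover so that Lemma \ref{Descend} applies and $R\pi_\ast\QQ=\cP^\ast\bH^\ast$ for an automorphic local system $\bH^\ast$ on the orthogonal Shimura variety $\Gamma_h\backslash D$ attached to the period map, whose period domain has dimension $m_\Lambda-3$; this reduction is harmless since the inclusions $\NL^\ast\subseteq R^\ast$ are preserved under pullback along morphisms preserving the relative Picard group (as noted just before the statement), and by Proposition \ref{P423} the Noether--Lefschetz ring on $\cF$ is the pullback of the connected-cycle ring from $\Gamma_h\backslash D$. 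All the representation-theoretic estimates below are made on $\Gamma_h\backslash D$, whose governing parameter is $m_\Lambda-3=\dim D$.

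Next I would represent each factor of $\xi$ by an element of the FKM subring of $E_1$. For a divisor class, decompose $c_1(\cL_i)=\ell_i^{(0)}+\ell_i^{(2)}$ via \eqref{Dec}, using $\HH^1_\pi=0$ since the fibres are simply connected: the base part $\ell_i^{(0)}$ is pulled back from a divisor on $\cF$, hence lies in $\mathrm{DCH}^\ast(\cF)\subseteq\NL^\ast(\cF)$ by Proposition \ref{NLconj} (which uses $\dim\cF\geq 3$) and is represented by the Kudla--Millson form $\varphi_{1,0}$, while the fibre part $\ell_i^{(2)}\in H^0(\cF,\HH^2_\pi)$ is the flat section determined by the polarisation vector and is represented by a Funke--Millson coefficient form $\varphi_{0,1}$. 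For a Chern class, $c_j(T_\pi)$ is a Hodge class on $\cU$ whose fibrewise restriction sits in the trivial summand of $H^{2j}$ under the Looijenga--Lunts--Verbitsky action (\S 2.4), while its remaining components lie in $H^{2s}(\cF,\HH^{2(j-s)}_\pi)$ with $1\le s\le j$. The hypothesis $j<\tfrac14(m_\Lambda-3)$ is precisely what places every such pair (codimension $s$, together with the Schur constituents $\SS_{[\lambda]}(V)$ of $\HH^{2(j-s)}_\pi$) in the stable range in which, by Theorem \ref{T:531}, the Hodge cohomology is exhausted by theta classes; Proposition \ref{FKMtheta} then represents each component by a special Schwartz form, so $c_j(T_\pi)$ lies in the FKM subring of $E_1$.

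Granting representability of the generators, the conclusion is formal. The FKM subring is closed under cup product by Proposition \ref{P:ring}, transported to $E_1$ as in \S\ref{S8.2}, so the full product $\xi$ lies in it and every Deligne component of $\xi$ is a theta class; in particular no bound on the divisor exponents $a_i$ is needed, because multiplication by the codimension-one special cycles keeps us inside $\SC^\ast$ thanks to Lemma \ref{hodge} and O'Grady's intersection formula. The fibre integral $\pi_\ast$ extracts the component $\xi^{(4n)}\in H^{2(d-2n)}(\cF,\HH^{4n}_\pi)$ and pairs it against the fibre fundamental class; since the trace $\HH^{4n}_\pi\to\QQ$ is the $\mathrm{O}(V)$-equivariant projection onto the trivial summand, it carries this theta class to a theta class with trivial coefficients. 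By the inclusion $H^{2r}_\theta\subseteq\SC^r_{\rm hom}$ of Theorem \ref{T:531} (valid in every degree), Corollary \ref{surjmoduli}, and Proposition \ref{P423}, the result lies in $\cP_h^\ast(\SC^{d-2n}_{\rm hom}(\Gamma_h\backslash D))=\NL^\ast_{\rm hom}(\cF)$, which is the assertion.

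The step I expect to be the crux is the FKM-representability of the Chern factors, i.e. checking that the numerical threshold $\tfrac14(m_\Lambda-3)$ is the correct one. This rests on the $\SO(\Lambda_h)$-module structure of $H^{2j}(X,\QQ)$ (\S 2.4): one must bound the highest weights $\SS_{[\lambda]}(V)$ occurring in each $\HH^{2(j-s)}_\pi$ and then match the pairs (codimension $s$, weight $|\lambda|$) against the stable range $2(s+|\lambda|)<m_\Lambda-3$ underlying Theorem \ref{T:531} and the Funke--Millson construction of \S 7.1. The binding components are those of smallest codimension but largest coefficient weight, and it is this interplay --- rather than the bare codimension --- that produces the factor $\tfrac14$ in place of the $\tfrac13$ of the trivial-coefficient case. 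A secondary point to verify is that $\pi_\ast$ is compatible with the ring structure on the FKM subring, so that tracing against the fibre class genuinely lands in trivial-coefficient theta classes rather than merely in some larger subspace.
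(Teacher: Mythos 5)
Your overall architecture --- represent the generators $c_1(\cL_i)$ and $c_j(\cT_\pi)$ by forms in the FKM subring of the $E_1$-term, invoke Proposition \ref{P:ring} to close under cup product, and push forward into theta classes, hence into $\SC^\ast_{\rm hom}$ and then $\NL^\ast_{\rm hom}$ via Proposition \ref{P423} --- is indeed the paper's strategy. But your reduction step contains a genuine gap, and it sits exactly where the stated numerical threshold $\tfrac14(m_\Lambda-3)$ has to be earned. You assert that after adding level structures ``all the representation-theoretic estimates are made on $\Gamma_h\backslash D$, whose governing parameter is $m_\Lambda-3$.'' This is untenable as soon as the generic Picard rank $r+1$ exceeds $1$. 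The correct reduction (the one the paper makes) is to the universal family of \emph{lattice}-polarized manifolds $\cU^\ell_{\Sigma,h}\to\cF^\ell_{\Sigma,h}$, whose period map is an open immersion into $\Gamma^\ell_\Sigma\backslash D_\Sigma$ of dimension $m_\Lambda-3-r$, \emph{not} $m_\Lambda-3$. The lifting of Deligne components to the Shimura variety (surjectivity of $\cP^\ast$ onto the lowest-weight part, as in the proof of Theorem \ref{mainthm2}) is a statement about restriction to an \emph{open} subvariety; for $\Gamma_h\backslash D$ the period image of $\cF$ lies in the proper subvariety $\Gamma_\Sigma\backslash D_\Sigma$ and the surjectivity fails. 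Concretely, $H^0(\cF,\HH^2_\pi)$ has rank $r+1$ (flat sections spanned by the $c_1(\cL_i)$), whereas $H^0(\Gamma_h\backslash D,\bH^2)$ has rank $1$ (only the polarization vector is monodromy-invariant), so for $i\geq 1$ the fiber component of $c_1(\cL_i)$ is not even in the image of $\cP^\ast$ from $\Gamma_h\backslash D$; equivalently, your Funke--Millson representative $\varphi_{0,1}$ needs a monodromy-fixed vector of $\Sigma$, which exists only over $\Gamma_\Sigma\backslash D_\Sigma$.

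Once you are forced onto $\Gamma_\Sigma\backslash D_\Sigma$, your argument run verbatim yields only the weaker threshold $j<\tfrac14\dim D_\Sigma$, because the ranges in Theorem \ref{T:531} and Proposition \ref{oddvanish} are governed by the dimension of the Shimura variety on which one works. The missing idea --- the crux of the paper's proof --- is a two-space argument: unlike the $\cL_i$ with $i\geq 1$, the relative tangent bundle $\cT_\pi$ descends along the forgetful map to the universal family $\cU^\ell_h\to\cF^\ell_h$ over the \emph{full} moduli space, of dimension $m_\Lambda-3$; one represents $c_j(\cT_\pi)$ by FKM forms there, where the range $j<\tfrac14(m_\Lambda-3)$ is available, and then pulls back to $\cU$, using that the pullback of the FKM ring is contained in the FKM ring. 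The divisor classes are handled on $\Gamma_\Sigma\backslash D_\Sigma$ (for which $b\geq 3$ suffices, since they only require codimension-one theta classes), the Chern classes on $\Gamma_h\backslash D$. Without this splitting, your proposal proves the theorem only with $\tfrac14(m_\Lambda-3)$ replaced by $\tfrac14\dim D_\Sigma$. A secondary inaccuracy: the factor $\tfrac14$ does not come from an interplay between codimension and coefficient weight ``replacing the $\tfrac13$''; it comes from the requirement, in the argument of Theorem \ref{mainthm2}, that every cohomological degree up to $2j$ lie in the range where Proposition \ref{oddvanish} (odd-degree vanishing, valid below half the dimension) and the $L^2$-comparison apply, i.e. $2j<\tfrac12(m_\Lambda-3)$, which is stronger than the codimension bound $j<\tfrac13(m_\Lambda-2)$ of Theorem \ref{T:531}.
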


\begin{proof}
According to the discussion above, it suffices to show that the assertion holds when $\pi:\cU\rightarrow \cF$ is a connected component of  the universal family (as stacks)  of lattice-polarized hyperk\"ahler manifolds. Because of \eqref{E45}, we can add level structures and further be reduced to the case where $\pi:\cU\rightarrow \cF$ is the universal family $\cU_{\Sigma,h}^\ell\rightarrow\cF_{\Sigma,h}^\ell$ in \S\ref{S8.2}. 

 To prove the assertion, we only need to show that the cup product  
\begin{equation}\label{cups}
c_1^{a_i}(\cL_i)\ldots c_1(\cL_r)^{a_r}\ldots c_1(\cT_\pi)^{b_1}\ldots c_{2n}(\cT_\pi)^{b_{2n}}
\end{equation}
lies in the FKM-ring because the pushward of classes in FKM-ring are lying in $H^\ast_\theta(\cF,\CC)$. 
Then the argument in the proof of Theorem \ref{Tcup} implies that as long as $b>2$, it suffices to show $c_1(\cL_i)$ and $c_{j}(\cT_\pi)$ are lying in the FKM-ring on $\cU$. 

Using the notations in $\S$8.2,   the cycle classes of $c_1(\cL_i)$ and $c_{j}(\cT_\pi)$ can be lifted to classes in $\bigoplus\limits_{p+q=2} H^{p}(Y, \bH^q)$ and  $\bigoplus\limits_{p+q=2j} H^{p}(Y, \bH^q)$ respectively  via the period map $\cP: \cF\rightarrow Y$  (See the proof of Theorem \ref{mainthm2}). 

Next, by our assumption $b\geq 3$, Theorem \ref{T:531}  implies that the  FKM-ring on $Y$ contains $\bigoplus\limits_{p+q=2} H^{p}(Y, \bH^q)$ and $\bigoplus\limits_{p+q=2j} H^{p}(Y, \bH^q)$ when  $j<\frac{m_\Lambda-3}{4}$.   This proves our assertion.
 
Last, let us  explain how we get the bound of $j$. Note that Theorem \ref{T:531} actually only applies when  $j<\frac{\dim Y}{4}$. However, recall that the moduli space $\cF_{h}^\ell$ has dimension $m_\Lambda-3$, and $c_j(\cT_{\pi})$ can be lifted  to the universal family $\cU_{h}^\ell\rightarrow\cF_{h}^\ell$ as $\cT_\pi$ descends. The lifted class lies in the  FKM-ring of $\cU_{h}^\ell$ by Theorem \ref{T:531} and the same argument as above. Then our claim follows from the fact the pullback of FKM-ring on $\cU_{h}^\ell$ is contained  in the FKM-ring of $\cU$. 
\end{proof}

\begin{remark}\label{fail}
Theorem \ref{tauthm} is no longer true  when $ \dim \cF\leq 2$. For instance,  there exist divisors on Hilbert modular surfaces  which are not in the span of modular curves. (cf.~\cite{va88})	
\end{remark}

\subsection{~}As one can see from the proof, the only obstruction for proving the cohomological tautological conjecture of all hyperk\"ahler is that we do not know whether the $i$-th component of  $c_k(\cT_\pi)$ in the decomposition \eqref{Dec} are theta classes when $\frac{m_\Lambda-3}{2}\leq i\leq \min\{2k,\frac{3(m_\Lambda-3)}{2}\}$. 	

 As pointed out in \cite{Vo12} and \cite{V1,SV16}, the Beauville-Voisin conjecture actually predicts that all $i$-th component of $c_k(\cT_\pi)$ lies in the last component (after possibly shrinking to an open subset of $\cF_h$). This can be deduced from the existence of so called ``Chow-Kunneth" decomposition (cf.~\cite{Mu93}).  Therefore, the Beauville-Voisin conjecture implies that Conjecture \ref{conj5} is true at least after shrinking to an open subset of $\cF_h$.

 %%%%%% old proof. When the very general fiber of $\cF_h$ has non-trivial automorphisms, one needs to take a finite covering of $\cF_h$ and apply Lemma \ref{Descend2}. All the argument goes through except the existence of a lift of $c_j(\cT_\pi)$ to $\bigoplus\limits_{p+q=2j} H^p(Y_K,\bH^q).$ As the classes of the line bundles $c_1(\cL_i)$ can still be lifted to $\bigoplus\limits_{p+q=2} H^p(Y_K,\bH^q)$, we can restrict to special $\kappa$-classes and obtain a slightly weaker result as below. \begin{theorem}\label{tauthm2}With the notations as in Theorem \ref{tauthm} and assume that $b\geq 3$, the special $\kappa$-classes $[\kappa_{a_1\ldots, a_r}^\cB]\in H^{\ast}(\cF,\QQ)$ is lying in $\NL^\ast_{\rm hom}(\cF)$. \end{theorem} 

\subsection{Further remarks} 
 For polarized hyper\"ahler manifolds of generalized Kummer type, we know that the moduli space $\cF_h$ is 4-dimensional.  In this case, the $\kappa$-classes on moduli space of lattice-polarized hyperk\"ahler manifolds automatically maps to $\NL^\ast_{\hom}(\cF_h)$ via the pushforward map. This is because  $\CH^1(\cF_h)$ is spanned by NL-divisors and $R^3_{\hom}(\cF_h)=R^4_{\hom}(\cF_h)=0$. So we have   
$$\NL^i_{\hom}=R^i_{\hom}(\cF_h), \hbox{ for all $i\neq2$}.$$  Then it suffices  to check whether the  $\kappa$-class 
\begin{equation}\label{egk}
 [\kappa_{a;b_1,\ldots,b_{2n}}]\in H^4(\cF_h,\QQ), \end{equation}  (i.e.  $a+\sum\limits_{j=1}^{2n} jb_j-2n=2$)   is lying in $\NL^2_{\hom}(\cF_h)$. By Theorem \ref{tauthm}, we know this is true when $b_i=0$ for $i>1$. When $b_i\neq 0$ for some $i>1$, one can see some examples computed in  $\S$\ref{GRR}. In the case of $n=2$,  combining these results together, we know that the conjecture holds if and only if  $$\kappa_{0;0,3,0,0}, \kappa_{1;0,1,1,0}, \kappa_{2;0,2,0,0},\kappa_{2;0,0,0,1},\kappa_{3;0,0,1,0}, \kappa_{4;0,1,0,0}$$  are contained in  $\NL^2_{\hom}(\cF_h)$.

\section{Ring of Special cycles}
In Kudla's program,  it is  more natural to consider the weighted cycles on Shimura varieties instead of the connected cycles. These are so called special cycles. In this section, we discuss the  properties of the special cycles and their applications. 

\subsection{Kudla's special cycles} Keep the same notations as in \S \ref{sc}. We let $\lambda$ be the first Chern class of Hodge line bundle on $Y_K$.  
For any $\beta \in \mathrm{Sym}^r(\QQ)$, we set 
 $$\Omega_\beta=\{ \bv\in V^n: \frac{1}{2} (\bv,\bv)=\beta, \ \dim U(\bv)=\rank \beta\},$$
and  the special cycles on $Y_\Gamma$ of codimension $r$ are defined as 
 \begin{equation} \label{specialcycle}
 Z(\beta, \varphi,K)=\lambda^{r-\rank\beta}\cdot \sum\limits_{\bv\in \Omega_\beta \atop\mod\Gamma_1} \varphi(\bv) c(U(\bv),1,K),
 \end{equation}
 with $\varphi$ being a $K$-invariant Schwartz function on $V^r(\AA_f)$. Then $Z(\beta,\varphi,K)$ can be viewed as a cycle class in $\CH^r(Y_K)$ if $\rank \beta=r$.
  As before, we let $\widetilde{\SC}^r(Y_K)$ be the subspace of $\CH^r(Y_K)$  spanned by  $ Z(\beta, \varphi,K)$ and $$\widetilde{\mathrm{SC}}^\ast(Y_K)\subseteq \CH^\ast(Y_K)$$  the subring generated by  all special cycles.    
 The special cycles have many nice properties, e.g. it behaves  well under pullback (cf.~\cite{Ku97}). More importantly, although in general  the  Chow ring $\CH^\ast(Y_K)$ is  not finitely generated, we have
 \begin{theorem}
 	The ring $\widetilde{\SC}^\ast(Y_K)$ is finitely generated and  its image in $H^\ast(Y_K,\QQ)$ contains  $H_\theta^\ast(Y_K,\QQ)$. 
 \end{theorem}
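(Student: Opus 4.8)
\emph{Structure of the argument.} The statement has two independent parts: the finite generation of the Chow-level ring $\widetilde{\SC}^\ast(Y_K)$, and the containment $H^\ast_\theta(Y_K,\QQ)\subseteq \mathrm{Im}\big(\widetilde{\SC}^\ast(Y_K)\to H^\ast(Y_K,\QQ)\big)$. The plan is to dispatch the cohomological containment quickly using Theorem \ref{T:531}, and then to concentrate the real effort on finite generation, which I would reduce to a statement about special divisors together with the Hodge class $\lambda$. Throughout I would use the elementary observation that every connected cycle $c(U,1,K)$ is itself a special cycle $Z(\beta,\varphi,K)$ (take $\beta$ to be half the Gram matrix of a rational basis of $U$ and $\varphi$ the characteristic function of that basis, so $\rank\beta=\dim U$ and no power of $\lambda$ intervenes). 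Hence the connected-cycle ring is contained in $\widetilde{\SC}^\ast(Y_K)$, and by Lemma \ref{hodge} the class $\lambda$ lies in $\widetilde{\SC}^1(Y_K)$.

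\emph{Theta classes.} Since the connected-cycle ring sits inside $\widetilde{\SC}^\ast(Y_K)$, its image $\SC^\ast_{\rm hom}(Y_K,\QQ)$ in cohomology is contained in the image of $\widetilde{\SC}^\ast(Y_K)$. Applying Theorem \ref{T:531} with trivial coefficients $\bE=\QQ$ gives $H^{2r}_\theta(Y_K,\QQ)\subseteq \SC^r_{\rm hom}(Y_K,\QQ)$ for every $r$, so $H^{2r}_\theta(Y_K,\QQ)$ lies in the image of $\widetilde{\SC}^r(Y_K)$. As theta classes are concentrated in even degrees, summing over $r$ yields the desired containment $H^\ast_\theta(Y_K,\QQ)\subseteq\mathrm{Im}(\widetilde{\SC}^\ast(Y_K))$.

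\emph{Finite generation.} Here I would first note that $\widetilde{\SC}^\ast(Y_K)$ is a graded subring of $\CH^\ast(Y_K)$ vanishing above degree $b=\dim Y_K$; it therefore suffices to show that it is generated as a $\QQ$-algebra by the single space $\widetilde{\SC}^1(Y_K)$ and that this space is finite dimensional, for then $\widetilde{\SC}^\ast(Y_K)$ is a finite-dimensional $\QQ$-algebra, a fortiori finitely generated. The finite dimensionality of $\widetilde{\SC}^1(Y_K)\subseteq\CH^1(Y_K)$ I would obtain from the modularity of the generating series $\sum_\beta Z(\beta,\varphi,K)\,q^\beta$ of special divisors in the Chow group (Borcherds, Bruinier): the classes $Z(\beta,\varphi,K)$ are Fourier coefficients of a vector-valued modular form valued in $\CH^1(Y_K)\otimes\CC$, and the space of such forms is finite dimensional. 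To pass from divisors to the whole ring I would invoke Kudla's product formula for special cycles (cf.~\cite{Ku97}): the intersection of special divisors is a linear combination of special cycles, and, inverting this relation by induction on codimension, every $Z(\beta,\varphi,K)$ with $\beta$ of full rank $r$ is expressed through products of $r$ special divisors, the degenerate lower-rank contributions being of the form $\lambda\cdot(\text{special cycle of codimension }r-1)$ and hence already in the subalgebra generated by $\widetilde{\SC}^1(Y_K)$ by induction together with $\lambda\in\widetilde{\SC}^1(Y_K)$.

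\emph{Main obstacle.} The delicate step is this generation-by-divisors at the level of Chow groups. In cohomology the class of a higher-rank special cycle is visibly a product of divisor classes, since the Kudla–Millson theta form factors through genus one in each variable; but controlling the intersection products inside $\CH^\ast(Y_K)$ requires the full product formula together with O'Grady's computation of improper self-intersections (already used above for the graded structure of the connected-cycle ring) and a careful bookkeeping of the $\lambda$-corrections in the inductive inversion. This is where I would expect to spend the technical effort, the finite dimensionality of $\widetilde{\SC}^1(Y_K)$ and the theta-class containment being comparatively formal given the cited results.
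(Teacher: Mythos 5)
Your argument rests on the ``elementary observation'' that every connected cycle $c(U,1,K)$ is itself a special cycle $Z(\beta,\varphi,K)$, and this observation is false in general. A $K$-invariant Schwartz function cannot isolate a single $\Gamma$-orbit of rational vectors: the rational points lying in one adelic $K$-orbit of a basis $\bx$ of $U$ typically decompose into \emph{several} $\Gamma$-orbits (a class-number/genus phenomenon), so $Z(\beta,\varphi,K)$ is a weighted sum of several connected cycles, not the single cycle $c(U,1,K)$. This is why only the inclusion $\widetilde{\SC}^\ast(Y_K)\subseteq \SC^\ast(Y_K)$ is clear, while the reverse inclusion is precisely the open conjecture stated immediately after this theorem in the paper, verified only in special situations such as the unimodular case of Proposition \ref{K3example} (and even there a single special cycle is a sum over overlattices, inverted by induction). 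Consequently your deduction of the theta-class containment collapses: Theorem \ref{T:531} bounds $H^{2r}_\theta$ by the \emph{connected}-cycle classes $\SC^r_{\rm hom}(Y_K,\QQ)$, which is a priori larger than the image of $\widetilde{\SC}^r(Y_K)$ (see the paper's footnote to Theorem \ref{T:531}). The containment asserted in the theorem is instead exactly what \cite{BMM16,BLMM16} prove \emph{directly for weighted cycles}: the Kudla--Millson theta forms represent classes in the span of weighted special cycles, and this is what the paper cites. (Your use of Lemma \ref{hodge} is also misplaced, though harmlessly so: $\lambda\in\widetilde{\SC}^1(Y_K)$ holds by definition, since $Z(0,\varphi,K)=\varphi(0)\lambda$ is the rank-zero coefficient in genus one.)

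The finite-generation argument has a second gap in the inversion step, and is in any case a detour. Kudla's product formula expresses a product of $r$ special divisors as the sum of $Z(\beta,\varphi_1\otimes\cdots\otimes\varphi_r)$ over \emph{all} $r\times r$ symmetric $\beta$ with prescribed diagonal $(\beta_1,\ldots,\beta_r)$, with the off-diagonal entries unconstrained; there is no evident way to extract an individual full-rank $Z(\beta,\varphi)$ from such sums, and whether $\widetilde{\SC}^\ast(Y_K)$ is generated by $\widetilde{\SC}^1(Y_K)$ is not known (it is bound up with Kudla's conjectures, not a bookkeeping exercise). Fortunately, this step is unnecessary: the results of Zhang \cite{ZW09} and Bruinier--Westerholt-Raum \cite{BWR15} that the paper invokes are modularity statements in \emph{every} genus $r$ --- the generating series of codimension-$r$ special cycles is a Siegel modular form of genus $r$ valued in $\CH^r(Y_K)\otimes\CC$ --- so each span $\widetilde{\SC}^r(Y_K)$ is finite dimensional, bounded by the dimension of the corresponding space of vector-valued Siegel modular forms (compare Proposition \ref{estimate}). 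Since special cycles occur only in codimensions $r\leq \dim Y_K$, the ring is generated by the finite-dimensional space $\bigoplus_r \widetilde{\SC}^r(Y_K)$ and is therefore finitely generated. That, together with the citation of \cite{BLMM16} for the theta classes, is the paper's entire proof; your genus-one-only route replaces it with two steps that do not go through.
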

The first statement follows from the work of Zhang {\cite{ZW09}} and a recent result of Bruinier and Westerholt-Raum \cite{BWR15}, while the second assertion is proved in  \cite{BLMM16}. Clearly, there is  an inclusion  $$\widetilde{\SC}^\ast(Y_K)\subseteq\SC^\ast(Y_K). $$  Moreover, we believe that  the following statement should be true. 
\begin{conjecture}
	$\SC^\ast(Y_K)=\widetilde{\SC}^\ast(Y_K)$. 
\end{conjecture}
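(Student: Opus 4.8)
The content of the conjecture is the reverse inclusion $\SC^\ast(Y_K) \subseteq \widetilde{\SC}^\ast(Y_K)$, the inclusion $\widetilde{\SC}^\ast(Y_K) \subseteq \SC^\ast(Y_K)$ being immediate from Lemma \ref{hodge} (which gives $\lambda \in \SC^1(Y_K)$) together with the fact that $\SC^\ast(Y_K)$ is a ring. Since $\SC^\ast(Y_K)$ is generated by the connected cycles, it suffices to prove that every connected cycle $c(U,1,K)$ of codimension $r$ lies in $\widetilde{\SC}^\ast(Y_K)$. The plan is to realize such a cycle as a genuine special cycle, after isolating a single orbit by a suitable choice of Schwartz function and then descending the level.

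First I would record that taking $\beta = 0$ in \eqref{specialcycle} gives $Z(0,\varphi,K) = \varphi(0)\lambda^r$, so in particular $\lambda \in \widetilde{\SC}^1(Y_K)$; this lets me absorb any $\lambda$-correction terms that might arise below. Next, since $U$ is a totally negative definite rational subspace of dimension $r$, it admits a rational basis $\bv^0 = (v_1,\ldots,v_r)$, and I set $\beta = \frac{1}{2}(\bv^0,\bv^0) \in \mathrm{Sym}^r(\QQ)$, a matrix of full rank $r$. For this $\beta$ the special cycle $Z(\beta,\varphi,K) = \sum_{\bv \in \Omega_\beta/\Gamma_1} \varphi(\bv)\,c(U(\bv),1,K)$ carries no $\lambda$-factor and is a weighted sum of codimension-$r$ connected cycles, so that $c(U,1,K)$ itself appears among its terms.

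The heart of the argument is to show that, as $\varphi$ ranges over $K$-invariant Schwartz functions on $V(\AA_f)^r$, the span of the classes $Z(\beta,\varphi,K)$ contains each individual $c(U(\bv),1,K)$. For this I would argue that distinct $\Gamma_1$-orbits inside $\Omega_\beta$ are separated by congruence conditions: any two $\bv,\bv' \in \Omega_\beta$ share the same Gram matrix, hence by Witt's theorem $\bv' = g\bv$ for some $g \in \mathrm{O}(V)(\QQ)$, and if in addition $\bv'$ is close to $\bv$ in $V(\AA_f)^r$ for a sufficiently small compact open $K' \subseteq K$, then strong approximation for the spin cover $\widetilde{G} = \Gspin(V)$, together with control of the pointwise stabilizer of $\bv$, forces $g$ to lie in $\Gamma_1$ modulo that stabilizer, so that $\bv$ and $\bv'$ lie in the same orbit. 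Granting this, the characteristic function of a small enough $K'$-coset of $\bv^0$ isolates the orbit of $\bv^0$, and $Z(\beta,\varphi,K')$ becomes a positive multiple of $c(U,1,K')$. I expect this separation step --- carried out cleanly on the $\Gspin$-level, with the sign and stabilizer ambiguities handled --- to be the main obstacle, and it is presumably why the statement is only conjectured here.

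Finally I would descend from level $K'$ back to $K$. By the earlier analysis (cf. the proof that $\mathrm{SC}^\ast(Y_K) = \bigoplus_r \mathrm{SC}^r(Y_K)$) one may always choose $K'$ small enough for the isolation above, and the covering map $f : Y_{K'} \to Y_K$ satisfies $f_\ast c(U,1,K') = c(U,1,K)$ in the notation of \eqref{pullbackcc}. Since the proper pushforward of a special cycle under a finite covering is again a special cycle --- a formula of Kudla, with $\varphi$ replaced by its average over $K/K'$ --- we obtain $c(U,1,K) = f_\ast Z(\beta,\varphi,K') \in \widetilde{\SC}^r(Y_K)$. As $c(U,1,K)$ was an arbitrary connected cycle and $\widetilde{\SC}^\ast(Y_K)$ is a ring, this yields $\SC^\ast(Y_K) \subseteq \widetilde{\SC}^\ast(Y_K)$, and hence the desired equality.
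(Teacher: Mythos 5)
First, a point of framing: the statement you were asked to prove is stated in the paper as a \emph{conjecture}, and the paper offers no general proof of it --- it only verifies the special case of $Y_{L_g}$ (moduli of polarized K3 surfaces) in Proposition \ref{K3example}. So your proposal must be judged on its own merits, and it has a genuine gap, precisely at the step you yourself call the heart of the argument. Fix a basis $\bv^0$ of $U$, let $\beta=\frac12(\bv^0,\bv^0)$, and let $H\subseteq G$ be the pointwise stabilizer of $\bv^0$, so that $H\cong \SO(U^\perp)$. Unwinding the definitions, the set of rational tuples $\bv\in\Omega_\beta$ lying in the adelic orbit $K'\bv^0$, taken modulo $\Gamma_{K'}$, is in natural bijection (up to the usual connected-component bookkeeping) with the fibre over the base point of the map of class sets
\begin{equation*}
\bigl(H(\AA_f)\cap K'\bigr)\backslash H(\AA_f)/H(\QQ)\;\longrightarrow\; K'\backslash G(\AA_f)/G(\QQ),
\end{equation*}
and this fibre can have more than one element. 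Strong approximation does hold for $\mathrm{Spin}(V)$, but it controls the class set of $G$; the obstruction here lives in the class set of the \emph{stabilizer} $H$, i.e.\ in the classical discrepancy between genus and class (equivalently, between spinor norm images of $H(\AA_f)\cap K'$ and of $K'$), and strong approximation for the ambient spin group cannot kill it. Concretely: two primitive negative definite sublattices $U,U'$ of the same rank whose orthogonal complements lie in the same genus but are \emph{not} isometric give two distinct connected cycles carrying identical adelic data. Since Schwartz functions on $V(\AA_f)^r$ are locally constant, every $K'$-invariant $\varphi$ assigns the two corresponding $\Gamma_{K'}$-orbits the same weight, so every special cycle $Z(\beta,\varphi,K')$ of \eqref{specialcycle} contains both connected cycles with equal coefficients, and no linear combination of special cycles can isolate one of them by your weighting argument. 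Shrinking the level makes this worse, not better: class sets grow as $K'$ shrinks. (This failure does not by itself disprove the conjecture --- the two connected cycles could conceivably be rationally equivalent in $\CH^\ast(Y_{K'})$ --- but establishing that would require an entirely different, geometric input.)

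It is also worth contrasting your strategy with the mechanism behind the one case the paper does prove. For $Y_{L_g}$ the ambient lattice $L_{K3}$ is unimodular, and Eichler--Nikulin uniqueness of primitive embeddings guarantees that for each isometry class $\Sigma$ there is exactly one orbit of primitive embeddings --- i.e.\ exactly the class-set fibre above is a single point. Even then the paper does not isolate orbits by choosing Schwartz functions; it observes that the special cycle $Z(\Sigma)$ equals $c(\Sigma)$ plus a sum of connected cycles $c(\Sigma')$ indexed by strictly larger saturations $\Sigma'$ (coming from non-primitive embeddings), and inverts this triangular system by induction on overlattices. Your reduction steps (the inclusion $\widetilde{\SC}^\ast(Y_K)\subseteq\SC^\ast(Y_K)$, the identity $Z(0,\varphi,K)=\varphi(0)\lambda^r$, and the level-descent by pushforward) are all fine, and your argument would become correct exactly on those $Y_K$ and in those codimensions where the relevant class-set fibres are trivial --- but that is a lattice-theoretic hypothesis of Eichler type, not a consequence of strong approximation, and it is precisely what separates the proven K3 case from the general conjecture.
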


We shall remark that this conjecture can be easily checked in some simple cases such as the moduli space of polarized K3 surfaces, or the moduli space of some polarized $K3^{[n]}$-type hyperk\"ahler manifolds (See Proposition \ref{K3example}). Recall that the second  cohomology of a K3 surface is isomorphic to the even unimodular lattice 
$$L_{K3}=U^{\oplus 2}\oplus E_8^{\oplus 3}(-1),$$
and the primitive part of a polarized K3 surface $(X,H)$ in $\cK_g$
is isomorphic to  $$L_g:=v^\perp=(2-2g)\oplus U^{\oplus 2}\oplus E_8^{\oplus 2}(-1),$$
where $v\in L_{K3}$ represents the class $c_1(H)$.    The moduli space $\cK_g$  of polarized K3 surfaces of genus $g$ is an open subset of the arithmetic quotient $Y_{L_g}=\Gamma_{L_g}\backslash D$, where $\Gamma_{L_g}$ is the associated polarized monodromy group.  In this situation, we have

\begin{proposition} \label{K3example}
	$\SC^r(Y_{L_g})=\widetilde{\SC}^r(Y_{L_g})$ for all $r$. 
\end{proposition}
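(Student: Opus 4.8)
The inclusion $\widetilde{\SC}^r(Y_{L_g})\subseteq\SC^r(Y_{L_g})$ is immediate from the definition \eqref{specialcycle}: each special cycle $Z(\beta,\varphi,K)$ is the product of $\lambda^{r-\rank\beta}$ with a $\QQ$-linear combination of connected cycles $c(U(\bv),1,K)$, and since $\lambda\in\SC^1(Y_K)$ by Lemma \ref{hodge} and $\SC^\ast(Y_K)$ is a ring, the whole class lies in $\SC^r(Y_{L_g})$. The plan is therefore to establish the reverse inclusion $\SC^r(Y_{L_g})\subseteq\widetilde{\SC}^r(Y_{L_g})$, that is, to write each generating connected cycle $c(U,1,K)$---with $U\subset V=L_g\otimes\QQ$ a negative definite subspace of dimension $r$---as a $\QQ$-combination of special cycles.

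First I would fix such a $U$ together with a basis $\bv_0=(v_1,\ldots,v_r)$ of integral vectors spanning $U$, and let $2\beta$ be its Gram matrix, so that $\beta\in\mathrm{Sym}^r(\QQ)$ is negative definite of rank $r$. For a $\beta$ of full rank $r$ the power of $\lambda$ disappears and the special cycle reduces to
$$Z(\beta,\varphi,K)=\sum_{\bv\in\Omega_\beta/\Gamma_1}\varphi(\bv)\,c(U(\bv),1,K),$$
so it is enough to produce a $K$-invariant Schwartz function $\varphi$ on $V^r(\AA_f)$ isolating the $\Gamma_1$-orbit of $\bv_0$. I would take $\varphi$ to be (a $K$-invariant average of) the characteristic function of a small open compact neighborhood of $\bv_0$, which amounts to imposing a deep congruence condition on the admissible tuples and, in particular, prescribing the images of the $v_i$ in the discriminant group of $L_g$.

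The heart of the argument is to show that, once the neighborhood is small enough, every rational tuple $\bv\in\Omega_\beta$ in the support of $\varphi$ spans a subspace whose intersection $U(\bv)\cap L_g$ lies in a single $\Gamma_1$-orbit, namely that of $U\cap L_g$. This is exactly where the structure of $L_g=\langle 2-2g\rangle\oplus U^{\oplus 2}\oplus E_8^{\oplus 2}(-1)$ is used: because $L_g$ contains two orthogonal hyperbolic planes, Eichler's criterion applies and shows that primitive sublattices of a fixed isometry type with fixed discriminant data form a single orbit under the stable orthogonal group $\Gamma_{L_g}=\Gamma_1$. Together with the discreteness of $\Omega_\beta$ and strong approximation for the spin group, this forces $Z(\beta,\varphi,K)=c\cdot c(U,1,K)$ for some nonzero $c\in\QQ$, whence $c(U,1,K)\in\widetilde{\SC}^r(Y_{L_g})$ and the proposition follows.

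The step I expect to be the main obstacle is this last orbit analysis: controlling the global $\Gamma_1$-equivalence classes of the sublattices $U(\bv)\cap L_g$ that are cut out by a purely local congruence condition, and ruling out contributions from subspaces that match $U$ $p$-adically at every place yet remain globally inequivalent. It is precisely the failure of Eichler's criterion for lattices with too few hyperbolic planes---equivalently, a genuine gap between genus and isometry class---that would obstruct the same argument for a general $Y_K$, which is why the clean equality is asserted only for the K3 lattices $L_g$.
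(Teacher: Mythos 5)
Your setup is fine as far as it goes: the easy inclusion $\widetilde{\SC}^r\subseteq\SC^r$ is exactly right, and since the adelic orbit $K\bv_0$ is open in the quadric $\{\bv\in V^r(\AA_f):\tfrac{1}{2}(\bv,\bv)=\beta\}$, one can indeed choose a $K$-invariant $\varphi$ whose support meets $\Omega_\beta$ only in that single $K$-orbit (and primitivity of the spanned lattice is a local condition, so it propagates through the orbit). The genuine gap is the step you flag yourself: the claim that all rational tuples in one $K$-orbit span $\Gamma_{L_g}$-equivalent subspaces, which you attribute to ``Eichler's criterion'' because $L_g$ contains two hyperbolic planes. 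Eichler's criterion is a \emph{rank-one} statement: a primitive vector of prescribed norm and prescribed image in $d(L_g)$ is unique up to the stable orthogonal group. For a rank-$r$ sublattice with $r\geq 2$ no such off-the-shelf theorem exists, and the natural generalization is false: two primitive sublattices $M_1,M_2\subset L_g$ can be everywhere locally equivalent --- same genus and same glue data, hence indistinguishable by \emph{any} $K$-invariant Schwartz function --- while their orthogonal complements $T_1,T_2$ in $L_g$ are non-isometric lattices in a single genus; then no isometry of $L_g$ at all, let alone one in $\Gamma_{L_g}$, carries $M_1$ to $M_2$. The two hyperbolic planes do not repair this: the transvection argument moves one vector at a time, and once the first vector is fixed, its orthogonal complement in $L_g$ need not contain two hyperbolic planes any more, so the induction on $r$ collapses. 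The failure is not hypothetical: for $r$ close to $19$ the complement has small rank (for $r=19$ it is positive definite of rank $2$), and genera of such lattices usually contain several classes --- this is the Shioda--Inose picture of singular K3 surfaces, where non-isomorphic surfaces share the same N\'eron--Severi lattice. In that situation your $Z(\beta,\varphi,K)$ is forced to be a sum of at least two distinct connected cycles with equal weights, never a multiple of a single $c(U,1,K)$.

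The paper's proof is structured precisely so as never to need this orbit isolation. It indexes connected cycles by primitive sublattices $\Sigma\subset L_{K3}$ containing the polarization $v$, invokes uniqueness of primitive embeddings into the \emph{unimodular} lattice $L_{K3}$ (a Nikulin-type statement about the full isometry group of a unimodular lattice, not an Eichler transvection argument inside $L_g$), and then, instead of shrinking the support of $\varphi$, it keeps all contributions and organizes the special cycle as $Z(\Sigma)=c(\Sigma)+\sum m(\Sigma')c(\Sigma')$, where the correction terms come from non-primitive embeddings and run over proper finite-index overlattices $\Sigma'$ of $\Sigma$. Since a fixed lattice has only finitely many even overlattices (isotropic subgroups of $d(\Sigma)$), this triangular system is inverted by induction, giving $c(\Sigma)\in\widetilde{\SC}^r(Y_{L_g})$. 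So your route demands strictly more than the paper's --- isolating a single $\Gamma_{L_g}$-orbit adelically --- and that extra strength is exactly what fails; repairing it would amount to proving the embedding-uniqueness statement the paper quotes, at which point your argument collapses into theirs.
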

\begin{proof} 
	The arithmetic subgroup $\Gamma_{L_g}$ is nothing but the collection of isometries of $L_{K_3}$ preserving $v$.  Every connected cycle $c(U)$ in $Y_{L_g}$ of codimension $r$ can be described as the map  $$\Gamma_\Sigma\backslash D_\Sigma\rightarrow Y_{L_g}, $$ 
	where $\Sigma=\mathrm{span}\{U,v\}\cap L_{K3}$ is a primitive sublattice of $L_{K3}$ and $D_\Sigma$ and $\Gamma_\Sigma$ are defined as in \S\ref{Slattice}.  We write  $c(\Sigma)=c(U)$ for this connected cycle. 
	
	As $L_{K_3}$ is unimodular,  every even lattice of signature $(1,r)$ ($r\leq 19$) admits a unique (up to isometry) primitive embedding to  $L_{K_3}$. 
	Let $\Sigma$ be an even lattice of signature $(1,r)$. 	 The special cycle on $Y_{L_g}$ is of the form  $$Z(\Sigma)=\sum\limits_{\substack{\widehat{\phi(\Sigma)}=\Sigma' \\\mod \Gamma_{L_g}}}c(\Sigma'),$$ 
	where the sum  runs over all embeddings $\phi:\Sigma\hookrightarrow L_{K3}$ modulo the isometries in $\Gamma_{L_g}$ and $\widehat{\phi(\Sigma)}$ is the saturation of $\phi(\Sigma)$ in $L_{K3}$. We can write $$Z(\Sigma)=c(\Sigma)+\sum\limits_{\substack{\Sigma\ncong\Sigma' \\ \Sigma\hookrightarrow\Sigma'}}  m(\Sigma')c(\Sigma')$$ for some integers $m(\Sigma')>0$. 
	Then it is easy to see that $c(\Sigma)$ is in the span of $Z(\Sigma')$  and  this proves the assertion.

\end{proof}

\subsection{Dimension of $\widetilde{\SC}^r(Y_K)$} Let us give some examples to explain  how to  use Zhang-Bruinier-Raum's result to construct relations between special cycles and estimate the dimension of $\widetilde{\SC}^i(Y_K)$. Let $\Lambda$ be an even lattice of signature $(2,b)$.  For simplicity, we restrict ourselves to the case $Y_K$ is the arithmetic quotient $Y_\Lambda:=\Gamma_\Lambda\backslash D$, where $\Gamma_\Lambda$ is the arithmetic subgroup  
\begin{equation}
\{g\in \SO(\Lambda)|~\hbox{$g$ acts trivially on $d(\Lambda)$}\}.
\end{equation}
 Let $\HH_r$ be the  Siegel upper half-plane of genus $r$. The metaplectic double cover $\Mp_{2r}(\ZZ)$ of $\Sp_{2r}(\ZZ)$ consists of
pairs $\left(M, \phi(\tau)\right)$,  where $$ M=
\left(\begin{array}{cc}a & b \\c & d\end{array}\right)\in \Sp_{2r}(\ZZ),
~~\phi: \HH_r\rightarrow \CC ~\hbox{with}~\phi^2(\tau)=\det(c\tau+d). $$ 
Let $\rho_\Lambda^{(r)}$ be the Weil-representation of $\Mp_{2r}(\ZZ)$ on $\CC[d(\Lambda)^r]$. For
any $k\in \frac{1}{2} \ZZ$,  a vector-valued Siegel modular form $f(\tau)$
of weight $k$ and type $\rho_\Lambda^{(r)}$  is a  $\CC[d(\Lambda)^r]$-valued holomorphic function on
$\HH_r$, such that
$$f(M\tau)=\phi(\tau)^{2k}\cdot \rho_\Lambda^{(r)}(M,\phi)
(f),~\textrm{for all} ~(M,\phi)\in \Mp_{2r}(\ZZ)$$
and it is a Siegel cusp form if $f(\tau)$ is holomorphic at all cusps. We  denote by  $M^{r}_{k,\Lambda}(\rho_\Lambda)$ (resp. $S^{r}_{k,\Lambda}(\rho_\Lambda)$) the space of  $\CC[d(\Lambda)^r]$-valued modular (resp. cusp) forms of weight $k$ and type $\rho_\Lambda^{(r)}$. Then  
   
\begin{proposition} \label{estimate}
For $1\leq r\leq b$, we have
	\begin{equation}\label{dim1}
	\dim \widetilde{\SC}^r(Y_\Lambda)\leq \dim \widetilde{\SC}^{r-1}(Y_\Lambda)+\dim S_{\frac{b+2}{2},M}^{r}(\rho_\Lambda^\vee).
	\end{equation}	 
\end{proposition}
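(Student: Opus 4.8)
The plan is to combine the modularity of the generating series of special cycles with an elementary filtration argument relating the degenerate cycles to the Hodge class $\lambda$. First I would record the structural fact that every degenerate special cycle lies in $\lambda\cdot\widetilde{\SC}^{r-1}(Y_\Lambda)$. Indeed, $\lambda\in\widetilde{\SC}^1(Y_\Lambda)$ since $Z(0,\varphi,K)=\varphi(0)\lambda$, and for $\beta$ with $s:=\rank\beta<r$ the definition \eqref{specialcycle} exhibits $Z(\beta,\varphi,K)$ as $\lambda^{r-s}$ times a codimension-$s$ special cycle; because $\widetilde{\SC}^\ast(Y_\Lambda)$ is a ring and $r-s\geq 1$, this product lies in $\lambda\cdot\widetilde{\SC}^{r-1}(Y_\Lambda)$. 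Writing $V=\lambda\cdot\widetilde{\SC}^{r-1}(Y_\Lambda)$, we have $\dim V\leq\dim\widetilde{\SC}^{r-1}(Y_\Lambda)$, so it suffices to bound the quotient $\widetilde{\SC}^r(Y_\Lambda)/V$ by $\dim S^r_{(b+2)/2,\Lambda}(\rho_\Lambda^\vee)$.

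For this, the key input is the modularity of the generating series of special cycles (Kudla's conjecture), established by Zhang \cite{ZW09} and Bruinier--Westerholt-Raum \cite{BWR15}: for any linear functional $\ell$ on $\widetilde{\SC}^r(Y_\Lambda)$ the series $\sum_{\beta}\ell(Z(\beta,\varphi,K))\,q^\beta$ is a Siegel modular form of genus $r$, weight $(b+2)/2$ and type $\rho_\Lambda^\vee$. I would use this to construct a map $\iota\colon(\widetilde{\SC}^r(Y_\Lambda)/V)^\vee\to S^r_{(b+2)/2,\Lambda}(\rho_\Lambda^\vee)$ as follows: a functional on the quotient lifts to a functional $\tilde\ell$ on $\widetilde{\SC}^r(Y_\Lambda)$ vanishing on $V$, and I send it to the generating series $\sum_\beta\tilde\ell(Z(\beta,\varphi,K))\,q^\beta$. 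Since $\tilde\ell$ annihilates $V$ and hence all degenerate cycles $Z(\beta,\varphi,K)$ with $\rank\beta<r$, the Fourier coefficients indexed by non-positive-definite $\beta$ vanish; by the characterization of cusp forms as the kernel of the Siegel $\Phi$-operator, the resulting form is cuspidal, so $\iota$ is well defined. It is injective because the coefficients $Z(\beta,\varphi,K)$ (as $\beta$ and $\varphi$ vary) span $\widetilde{\SC}^r(Y_\Lambda)$, so a functional whose generating series vanishes must itself be zero. Consequently $\dim\bigl(\widetilde{\SC}^r(Y_\Lambda)/V\bigr)\leq\dim S^r_{(b+2)/2,\Lambda}(\rho_\Lambda^\vee)$, and adding $\dim V\leq\dim\widetilde{\SC}^{r-1}(Y_\Lambda)$ yields \eqref{dim1}.

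The main obstacle is not the linear algebra but the correct invocation and bookkeeping of the modularity theorem. One must ensure that the generating series is genuinely valued in the Chow-theoretic group $\widetilde{\SC}^r$ (and not merely in cohomology), that the weight is exactly $(b+2)/2=\dim\Lambda/2$ and the type is the dual $\rho_\Lambda^\vee$ rather than $\rho_\Lambda$, and---most delicately---that the geometric Siegel $\Phi$-operator is compatible with the analytic one, i.e.\ that functionals killing $\lambda\cdot\widetilde{\SC}^{r-1}$ precisely annihilate the degenerate Fourier coefficients. Verifying this compatibility, together with the identification of the rank-$s$ sum $\sum_{\bv}\varphi(\bv)\,c(U(\bv),1,K)$ appearing in \eqref{specialcycle} as an honest codimension-$s$ special cycle, is the technical heart of the argument.
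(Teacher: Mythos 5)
Your proof is correct and is essentially the paper's argument in dualized form: both rest on the Zhang--Bruinier--Westerholt-Raum modularity theorem, the fact that the degenerate coefficients $Z(\beta,\varphi,K)$ with $\rank \beta<r$ lie in $\lambda\cdot\widetilde{\SC}^{r-1}(Y_\Lambda)$, and the characterization of Siegel cusp forms by positive-definite Fourier support. Where you inject $\bigl(\widetilde{\SC}^r(Y_\Lambda)/\lambda\cdot\widetilde{\SC}^{r-1}(Y_\Lambda)\bigr)^\vee$ into the space of cusp forms by taking generating series of functionals, the paper constructs exactly the transpose map --- pairing the generating series $\Phi(\tau)$, which becomes cusp-form-valued after reduction modulo $\lambda\cdot\widetilde{\SC}^{r-1}(Y_\Lambda)$, against $S^{r}_{\frac{b+2}{2},\Lambda}(\rho_\Lambda^\vee)$ via the Petersson inner product and invoking surjectivity onto the quotient --- and the type bookkeeping you flag ($\rho_\Lambda$ versus $\rho_\Lambda^\vee$) is immaterial for the dimension count, since Petersson duality gives $\dim S^{r}_{\frac{b+2}{2},\Lambda}(\rho_\Lambda)=\dim S^{r}_{\frac{b+2}{2},\Lambda}(\rho_\Lambda^\vee)$.
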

\begin{proof} The proof is similar as the case $r=1$ proved in \cite[\S5]{Br02}. 
The work of Zhang-Bruinier-Raum says that the generating series 
	\begin{equation}
\Phi(\tau)=\sum Z(\beta,\varphi) q^{\beta} \varphi^\ast, 
	\end{equation}
is a $\CC[d(\Lambda)^r]$-valued Siegel modular form of weight $\frac{b+2}{2}$ and  type $\rho_{\Lambda}$, where $q^\beta=\exp(2\pi {\rm Tr}(\beta \tau))$ (cf.~\cite[Theorem 5.2]{BWR15}).   Using the Petersson inner product  
$$M^{r}_{\frac{b+2}{2}, \Lambda}(\rho_\Lambda)\times S^{r}_{\frac{b+2}{2}, \Lambda}(\rho_\Lambda^\vee)\rightarrow \CC, $$
we obtain a natural map
\begin{equation}\label{mod}
S^{r}_{\frac{b+2}{2},\Lambda} (\rho_\Lambda^\vee)\rightarrow \widetilde{\SC}^r(Y_\Lambda)/ \lambda\cdot \widetilde{\SC}^{r-1}(Y_\Lambda) 
\end{equation}	
by tensoring with $\Phi(\tau)$. The assertion then follows directly from the surjectivity of \eqref{mod}.
\end{proof}

For general $Y_K$, the statement is still true once we replace  $S_{\frac{b+2}{2},M}^{r}(\rho_\Lambda^\vee)$ by the space of cuspidale vector-valued modular forms with respect to certain congruence subgroup of $\Mp_{2r}(\ZZ)$.  This also follows from the modularity result of Zhang-Bruinier-Raum.

\begin{remark}
	The inequality \eqref{dim1} is in general strict (see \cite[\S6.1]{Br14}). A very interesting question to investigate is when the equality holds (see~\cite[Theorem 2]{Br14}). This would enable us to compute the Betti numbers of $Y_K$ and also the dimension of $\widetilde{\SC}^r(Y_K)$.
\end{remark}

\subsection{Further questions}Let us step back to $\SC^r(Y_K)$.  
Similar as the result in \cite{GT05}, when $b\geq 3$, one can show that $$\lambda^{b-2}\in \mathrm{SC}^{b-2}(Y_K)\neq 0$$ while $\lambda^{b-1}=\lambda^{b}=0$ in $\CH^\ast(Y_K)$. Furthermore, we have 
\begin{proposition} For $i>b-2$ and $j>1$, the subgroup $$\lambda^j\cdot \mathrm{SC}^{i-j}(Y_K)\subseteq \mathrm{SC}^{i}(Y_K)$$ is zero. 
\end{proposition}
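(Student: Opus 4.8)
The plan is to reduce to the generating connected cycles and then restrict $\lambda$ to the associated sub‑Shimura varieties. Since $\mathrm{SC}^{i-j}(Y_K)$ is spanned by the connected cycles $c(U,1,K)$ of codimension $i-j$, and $\lambda\in\mathrm{SC}^1(Y_K)$ by Lemma \ref{hodge}, it suffices to prove $\lambda^{j}\cdot c(U,1,K)=0$ for every negative‑definite $U$ with $\dim U=i-j$. Write $\iota:Z=\Gamma_{1,U}\backslash D_U\to Y_K$ for the map \eqref{cc}. Then $Z$ is itself a connected orthogonal Shimura variety of signature $(2,b')$ with $b'=b-(i-j)$, and, since $\lambda$ is preserved under pullback along \eqref{cc}, the Hodge class restricts to the Hodge class $\lambda_Z=\iota^{\ast}\lambda$ of $Z$. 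The projection formula gives
\[
\lambda^{j}\cdot c(U,1,K)=\iota_{\ast}\big((\lambda_Z)^{j}\big),
\]
so (using \eqref{pcc}) the assertion is equivalent to the vanishing $(\lambda_Z)^{j}=0$ in $\CH^{j}(Z)$.

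Next I would cut down the range of $i$. A connected cycle of codimension $i$ requires a negative‑definite subspace of dimension $i$ inside $V$ of signature $(2,b)$, so $\mathrm{SC}^{i}(Y_K)=0$ for $i>b$; the only nontrivial instances of $i>b-2$ are therefore $i\in\{b-1,b\}$, for which $b'=j+1$ and $b'=j$ respectively. Because $j>1$ we always have $b'\geq 2$, and $j\leq b'=\dim Z$, so $(\lambda_Z)^{j}$ never vanishes for dimension reasons alone. Rather, the two hypotheses conspire precisely so that $j\in\{b'-1,\,b'\}$, i.e. one must kill the \emph{sub‑top} or \emph{top} power of the Hodge class on $Z$; this is exactly where the known vanishing of $\lambda$ is available, and it explains the sharpness of the bounds (for $i=b-2$ one would need $(\lambda_Z)^{2}=0$ on a fourfold, and for $j=1$, $i=b-1$ one would need $\lambda_Z=0$ on a surface, both of which fail in general).

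For $i=b-1$ we have $b'=j+1\geq 3$ and $j=b'-1$, so $(\lambda_Z)^{j}=(\lambda_Z)^{b'-1}=0$ by the vanishing $\lambda^{b'-1}=0$ for orthogonal Shimura varieties of dimension $b'\geq 3$ recorded above (cf.~\cite{GT05}). For $i=b$ we have $b'=j$ and must show the top self‑intersection $(\lambda_Z)^{b'}=0$; when $b'\geq 3$ this again follows, writing $(\lambda_Z)^{b'}=\lambda_Z\cdot(\lambda_Z)^{b'-1}$. The one remaining case is the edge $i=b$, $j=2$, where $b'=2$ and $Z$ is a signature‑$(2,2)$ Shimura surface, for which the recalled vanishing (valid only in dimension $\geq 3$) says nothing. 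Here I would again use $\lambda_Z\in\mathrm{SC}^1(Z)$ and the projection formula: expressing $\lambda_Z=\sum_k a_k\,c(W_k)$ in terms of connected curves $c(W_k)$ of signature $(2,1)$ on $Z$ reduces $(\lambda_Z)^2$ to a combination of pushforwards $(\iota_{W_k})_{\ast}(\lambda_{W_k})$ of the Hodge classes of open Shimura curves, and a power of such a class is cut out by a modular form nonvanishing on the open curve (a power of the discriminant), hence is supported on the cusps and vanishes rationally.

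The main obstacle is precisely this edge case $b'=2$. The reduction to $(\lambda_Z)^{j}=0$ is formal, and for $b'\geq 3$ it is immediate from the already‑established vanishing $\lambda^{b'-1}=\lambda^{b'}=0$; but for the surface $Z$ one cannot invoke that result and must control $(\lambda_Z)^2$ directly. The delicate point is that the special curves $c(W_k)$ occurring in $\lambda_Z$ could a priori include compact Shimura curves, on which the Hodge class does not vanish, so a naive term‑by‑term argument need not succeed. What ultimately saves the statement is that in the non‑compact (isotropic) setting relevant to the moduli spaces a power of $\lambda$ on the open orthogonal Shimura variety is supported on the boundary — the same mechanism (cf.~\cite{GT05}) that yields $\lambda^{b-1}=\lambda^{b}=0$, here applied in dimension two. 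Making this last step rigorous for a general signature‑$(2,2)$ quotient is the part of the argument I expect to require the most care.
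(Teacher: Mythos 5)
Your main reduction is the right one, and it is what the paper's one-line appeal to \cite{GT05} intends: restrict to the connected cycles spanning $\SC^{i-j}(Y_K)$, pull $\lambda$ back to the sub-Shimura variety $Z=\Gamma_{1,U}\backslash D_U$ via \eqref{cc}, and apply the projection formula. This correctly disposes of $i=b-1$ (all $j\geq 2$) and of $i=b$ with $j\geq 3$. The gap is in the remaining case $(i,j)=(b,2)$, and it originates in your claim that the assertion is \emph{equivalent} to $\lambda_Z^{j}=0$ in $\CH^{j}(Z)$: pushforward along $\iota$ is not injective, so this is only a sufficient condition, and in the edge case it is a condition that can actually be \emph{false}. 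Indeed, a rational quadratic space of signature $(2,2)$ may be anisotropic (e.g.\ the norm form of an indefinite division quaternion algebra over $\QQ$), and subspaces $U$ with $U^{\perp}$ of this kind can occur; then $Z$ is a \emph{compact} quaternionic Shimura surface, $\lambda_Z$ is ample on it, and $\lambda_Z^{2}$ is a zero-cycle of positive degree on a projective surface, hence nonzero in $\CH_0(Z)$. So the statement you set out to prove in your final paragraph cannot hold in that generality; and even for the non-compact surfaces your fallback is out of reach: $\lambda_Z^{2}=0$ in $\CH_0$ of an open orthogonal Shimura surface (say a Hilbert modular surface) does not follow from \cite{GT05}, whose mechanism genuinely requires dimension $\geq 3$ (equivalently rank $\geq 5$, which forces isotropy and hence non-compactness); for surfaces it is essentially an open Bloch--Beilinson-type statement, as your own worry about compact special curves already indicates.

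The case $b'=2$ should instead be handled by \emph{enlarging} the cycle rather than restricting to it. Given $c(U,1,K)$ of dimension $2$, choose a negative definite subspace $U'\subset U$ with $\dim U'=\dim U-1=b-3$. Since $U'\subseteq U$, the pointwise stabilizer of $U$ is contained in that of $U'$, so $\Gamma_{1,U}\subseteq\Gamma_{1,U'}$ and $D_U\subset D_{U'}$; hence the map \eqref{cc} for $U$ factors through $\iota':Z'=\Gamma_{1,U'}\backslash D_{U'}\to Y_K$, and $c(U,1,K)=\iota'_{*}(\beta)$ for some $\beta\in\CH^{1}(Z')$ (indeed $\beta$ is a connected cycle of $Z'$, attached to the negative line $U\cap U'^{\perp}$). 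The projection formula now gives $\lambda^{2}\cdot c(U,1,K)=\iota'_{*}\bigl(\lambda_{Z'}^{2}\cdot\beta\bigr)$. The variety $Z'$ is an orthogonal Shimura variety of dimension $3$, and since every indefinite rational quadratic form of rank $5$ is isotropic (Meyer's theorem), $Z'$ is automatically non-compact; thus \cite{GT05} applies to $Z'$ and yields $\lambda_{Z'}^{2}=0$ in $\CH^{2}(Z')$, whence $\lambda_{Z'}^{2}\cdot\beta=0$ and the class vanishes. This also explains why the compact surfaces above do not contradict the proposition: the vanishing takes place only after pushing through the ambient non-compact threefold, not on $Z$ itself. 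With this replacement, and your argument kept verbatim for $b'\geq 3$, the proof is complete and is in substance the argument the paper is citing.
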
 

\begin{proof}
	The proof is the same as in \cite{GT05}, where one can show that the self intersection of Hodge line bundle of codimension $\geq b-1$  on a Shimura variety of orthogonal type  with dimension $b\geq 3$ is rationally equivalently to zero. 
\end{proof}

Recall that  $\SC^r_{\hom}(Y_K)=0$ for $r>b-2$ by Corollary \ref{Cpet}, we  wonder if the questions below have a positive answer. 
\begin{question}\label{que}
 	Assume that $\dim Y_K\geq 3$. Do the following hold?
 	\begin{enumerate}
 		\item  $\mathrm{SC}^{r} (Y_K)=0 $ for $r>b-2$.
 		\item 	The cycle class map induces an {\rm isomorphism} $
 		cl:\mathrm{SC}^\ast(Y_K)\rightarrow \mathrm{SC}^\ast_{\rm hom}(Y_K)
 		$. 
 	\end{enumerate}
 \end{question}

Let us come back to $\cK_g$, where the first part of Question \ref{que} indicates that  $\NL^r(\cK_g)=0$ for $r>17$. This can be viewed as an analogous of the  Faber's conjecture on $\cM_g$. The second part is then equivalent to saying that there is an isomorphism $$\NL^\ast(\cK_g)\cong \NL^\ast_{\rm hom}(\cK_g). $$
For small $g$, one can get  partial results towards these questions via the geometric construction of $\cK_g$.  
\begin{proposition}When $g\leq 14$ or $g\in\{ 16,18,20\} $,  $\NL^{19}(\cK_g)=0$.  \end{proposition}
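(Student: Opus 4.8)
The plan is to use the fact that $\cK_g$ has dimension $b=19$, since $\cK_g$ is an open subset of $Y_{L_g}=\Gamma_{L_g}\backslash D$ with $L_g$ of signature $(2,19)$. Hence codimension $19$ is the top codimension, and $\NL^{19}(\cK_g)$ sits inside $\CH^{19}(\cK_g)=\CH_0(\cK_g)$, the (rational) group of zero-cycles. Rather than analyzing the individual codimension-$19$ Noether--Lefschetz cycles (which parametrize singular K3 surfaces of Picard number $20$), I would instead show that the \emph{ambient} group $\CH_0(\cK_g)$ already vanishes; since $\NL^{19}(\cK_g)\subseteq\CH_0(\cK_g)$, this immediately forces $\NL^{19}(\cK_g)=0$.

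The geometric input, which is exactly what pins down the range of $g$, is the unirationality of $\cK_g$. This is classical for small $g$, holds for $g\in\{16,18,20\}$ by work of Mukai, and the remaining cases $g=13,14$ follow from more recent explicit model constructions (e.g.\ Nuer). The genera left out, namely $g=15,17,19$ and $g\geq 21$, are precisely those where unirationality is not available (and for large $g$ the space is of general type); this explains why the statement is restricted to $g\leq 14$ or $g\in\{16,18,20\}$.

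Granting unirationality, the deduction is soft. Working with the coarse moduli space $K_g$ (so that $\CH^\ast(\cK_g)\cong\CH^\ast(K_g)$ with $\QQ$-coefficients) and passing to a resolution of a projective compactification, I obtain a smooth projective variety $\overline{\cK_g}$ birational to $\cK_g$, hence unirational, hence rationally connected; therefore the degree map gives $\CH_0(\overline{\cK_g})\cong\QQ$, generated by the class of any closed point. Since $\cK_g$ is not proper, the boundary $\partial=\overline{\cK_g}\setminus\cK_g$ is a nonempty closed subset, and the class of a point $p\in\partial$ is a generator of $\CH_0(\overline{\cK_g})$. The localization sequence
\[
\CH_0(\partial)\xrightarrow{\ \iota_\ast\ }\CH_0(\overline{\cK_g})\longrightarrow \CH_0(\cK_g)\longrightarrow 0
\]
then shows that $\iota_\ast$ is surjective, so $\CH_0(\cK_g)=0$, and thus $\NL^{19}(\cK_g)=0$.

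The main obstacle is not homological triviality: cohomologically $\NL^{19}_{\mathrm{hom}}(\cK_g)=0$ holds for \emph{every} $g$, since $19>b-2=17$ yields $\SC^{19}_{\mathrm{hom}}=0$ by Corollary \ref{Cpet}. The genuine content is the upgrade from homological to rational equivalence of zero-cycles, which is precisely what rational connectedness supplies and which is only available in the listed range. Consequently the real limitation of this method is the dependence on known unirationality results, and the one point requiring care in the write-up is verifying that the localization argument descends correctly from the Deligne--Mumford stack $\cK_g$ to its (possibly singular) coarse space and to a resolution; this is routine with $\QQ$-coefficients, via proper-pushforward invariance of $\CH_0$, but should be made explicit.
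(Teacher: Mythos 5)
Your proposal is correct and takes essentially the same route as the paper: the paper's proof is a single sentence deducing the vanishing from unirationality of $\cK_g$ (Mukai for $g\leq 13$ and $g\in\{16,18,20\}$, Nuer for $g=14$). Your write-up merely makes explicit the details the paper leaves implicit, namely that unirationality of a smooth projective compactification gives $\CH_0\cong\QQ$ generated by a boundary point, so the localization sequence kills $\CH_0$ of the non-proper space $\cK_g$ and hence $\NL^{19}(\cK_g)\subseteq\CH_0(\cK_g)=0$.
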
\begin{proof} The first assertion follows from the unirationality of $\cK_g$ for small $g$.	Indeed, Mukai has proved that $\cK_g$ is unirational when $g\leq 13$ or $g\in \{16,18,20\}$ and Nuer \cite{Nu15} recently proved that $\cK_{14}$ is also unirational.  \end{proof}

\begin{remark}In this case, as the Betti number of $\cK_2$ has been computed in \cite{KL89}, one can  make explicity computation to verify Question \ref{que} (2) via Proposition \ref{estimate}.
\end{remark}

\bibliographystyle {plain}
\bibliography{Universal}

\end{document}